\let\@wraptoccontribs\wraptoccontribs
\numberwithin{equation}{theorem}
\DeclareMathOperator{\ssHom}{ \sH\!\!\;\!\!\text{\calligra{\Large om}\,}}
\newcommand{\kay}{\mathcal{k}}
\newcommand{\el}{\mathcal{l}}
\renewcommand{\:}{\colon}
\newcommand{\p}{\mathfrak{p}}
\newcommand{\q}{\mathfrak{q}}
\DeclareMathOperator{\APic}{APic}
\DeclareMathOperator{\ADiv}{ADiv}
\DeclareMathOperator{\depth}{depth}
\DeclareMathOperator{\val}{val}
\DeclareMathOperator{\Ass}{Ass}
\DeclareMathOperator{\Norm}{N}
\DeclareMathOperator{\colim}{colim}
\DeclareMathOperator{\asr}{\#} 
\DeclareMathOperator{\length}{\lambda}
\newcommand{\Gal}{\textnormal{Gal}}
\DeclareMathOperator{\DIV}{Div}
\DeclareMathOperator{\Cl}{Cl}
\DeclareMathOperator{\Branch}{Branch}
\theoremstyle{theorem}
\renewcommand{\sO}{\mathcal{O}}
\renewcommand{\sF}{\mathcal{F}}
\renewcommand{\sE}{\mathcal{E}}
\renewcommand{\sC}{\mathcal{C}}
\renewcommand{\sD}{\mathcal{D}}
\renewcommand{\sI}{\mathcal{I}}
\renewcommand{\sH}{\mathcal{H}}
\renewcommand{\sM}{\mathcal{M}}
\renewcommand{\sG}{\mathcal{G}}
\renewcommand{\sp}{\upbeta}
\newcommand{\eg}{\emph{e.g.}~}
\newcommand{\ie}{\emph{i.e.}~}
\newcommand{\cf}{\emph{cf.}~}
\begin{document}

\title[$F$-signatures, splitting primes, and test modules under finite covers]{On the behavior of $F$-signatures, splitting primes, and test modules under finite covers}

\author[J.~Carvajal-Rojas]{Javier Carvajal-Rojas}
\address{\'Ecole Polytechnique F\'ed\'erale de Lausanne\\ SB MATH CAG\\MA C3 615 (B\^atiment MA)\\ Station 8 \\CH-1015 Lausanne\\Switzerland \newline\indent
Universidad de Costa Rica\\ Escuela de Matem\'atica\\ San Jos\'e 11501\\ Costa Rica}
\email{\href{mailto:javier.carvajalrojas@epfl.ch}{javier.carvajalrojas@epfl.ch}}
\author[A.~St\"abler]{Axel St\"abler}
\address{Universit\"at Leipzig\\ Institut f\"ur Mathematik\\Augustusplatz 10\\
04109 Leipzig\\Germany} 
\email{\href{mailto:staebler@uni-leipzig.de}{staebler@math.uni-leipzig.de}}

\keywords{$F$-signature, splitting prime, test module, $F$-regularity, $F$-purity.}

\thanks{Carvajal-Rojas was supported in part by the NSF CAREER Grant DMS \#1252860/1501102 and the ERC-STG \#804334. St\"abler was supported in part by SFB-Transregio 45 Bonn-Essen-Mainz financed by Deutsche Forschungsgemeinschaft.}

\subjclass[2010]{13A35, 14B05}

\begin{abstract}
We give a comprehensive treatment on how $F$-signatures, splitting primes, splitting ratios, and test modules behave under finite covers. To this end, we expand on the notion of transposability along a section of the relative canonical module as first introduced by K.~Schwede and K.~Tucker.
\end{abstract}
\maketitle
\section{Introduction}

This work is concerned with natural invariants attached to \emph{Cartier modules} over rings of positive characteristic. More precisely, it investigates their behavior under finite covers of the base ring. The prototypical Cartier module over an $\bF_p$-algebra $R$ is a pair $(M,\varphi)$ where $M$ is a a finitely generated $R$-module and $\varphi\: M \to M$ is a $p^{-1}$-linear map, \ie $r\varphi(m)=\varphi(r^pm)$ for all $r\in R$ and $m\in M$. For instance, $M$ can be taken to be $R$ and $\varphi$ to be a Frobenius splitting, \ie a splitting of the Frobenius endomorphism $F\: R \to R$ sending $r \mapsto r^p$. Another prominent example is the Cartier operator $\kappa \: \omega_R \to \omega_R$ associated to the canonical module of $R$ of suitable rings $R$. More general Cartier modules are obtained by replacing $\varphi$ by a so-called Cartier algebra of $p^{-1}$-linear maps. See \autoref{Sec.CartierModules} and \autoref{sec.F-Stuff} for further details on Cartier modules and their invariants.
 
 The concept of \emph{$F$-regularity} is central in the study of Cartier modules. To many, the most important open problem in positive characteristic commutative algebra is the $F$-regularity of splinters. Roughly speaking, it is conjectured that $F$-regularity of an $\bF_p$-algebra $R$ is equivalent to the property that all finite covers of $R$ are split, in this case, $R$ is called a splinter. In particular, this poses the problem of describing the behavior of the $F$-regularity of Cartier modules under finite covers of the ground-ring. There have been several occurrences of this in the literature \cite{SchwedeTuckerTestIdealFiniteMaps} being the most prominent work. The same theme lies at the core of \eg \cite{CarvajalSchwedeTuckerEtaleFundFsignature,CarvajalFiniteTorsors,JeffriesSmirnovTransformationRule}.

The main results in the aforementioned papers are formulas (referred to as transformations rules) for the basic objects measuring the $F$-regularity of (certain) Cartier modules under finite maps. There is, however, no unified treatment of the phenomena expressed in those formulas. Further, it is often not clear what the key ingredients and underlying reasons for those formulas to work are. For instance, is normality an essential hypothesis? This is a truly important question as any birational geometer in positive characteristic can tell. In fact, by removing normality from the hypothesis, those formulas can be valuable tools for proving normality when this one is unknown (\eg cyclic covers).

In this work, we provide a formalism to treat those questions in a rather simple way. For instance, the desired transformation rules are exhibited as formal consequences of Grothendieck duality for finite covers. For such formalism to work the best, we need to work in a rather general framework. However, we do it seeking naturality in the proofs rather than generality in the statements. Concretely, we generalize the transformation rules for the $F$-signature in \cite{CarvajalSchwedeTuckerEtaleFundFsignature,CarvajalFiniteTorsors} and the main results in \cite{SchwedeTuckerTestIdealFiniteMaps} regarding the behavior of test ideals under finite covers. These generalizations are achieved by using the formalism of Cartier modules and functors $f^*$, $f^!$ introduced by M.~Blickle and the second named author \cite{BlickleStablerFunctorialTestModules} as well as the formalism of \emph{transposability along sections of the relative canonical module} introduced by Schwede--Tucker \cite{SchwedeTuckerTestIdealFiniteMaps,SchwedeTuckerExplicitlyExtending}. We develop the theory of transposability in \autoref{sec.Transposability} allowing us to drop normality from our hypothesis and express our results in a cleaner way.

Before stating our results, let us provide some background. Let $R$ be an $F$-finite noetherian $\bF_p$-algebra and $\sC$ be a Cartier $R$-algebra. To a Cartier $\sC$-module $M$, we may associate different objects that are useful in understanding its \emph{simplicity}. For example, the \emph{test module} $\uptau(M,\sC) \subset M$ is the smallest among certain Cartier submodules; see \cite{BlickleStablerFunctorialTestModules}. If $R$ is a local domain and $M=R$, its \emph{splitting prime} $\sp(R,\sC) \subset R$ is either nonproper or the unique largest proper Cartier submodule of $R$ and turns out to be a prime ideal. The ideals $\uptau(R,\sC)$, $\sp(R,\sC)$ bound the non-simplicity of $R$ as a Cartier module as follows. If $0 \subsetneq \mathfrak{r} \subsetneq R$ is a Cartier submodule of $R$ then:
\[
\uptau(R,\sC)\subset \mathfrak{r} \subset \sp(R,\sC).
\]
In particular, $R$ is a simple Cartier $\sC$-module if and only if $\uptau(R,\sC)=(1)$, or if and only if $\sp(R,\sC)=(0)$. In fact, if $\sp(R,\sC)$ is proper, the Cartier submodules of $R$ form a finite sub-lattice; in the sense of \cite[8]{JacobsonBasicAlgebraI}, of radical ideals of $R$ which is bounded (\ie it has a greatest and a least element); see \cite{EnescuHochsterTheFrobeniusStructureOfLocalCohomology,SharpGradedAnnihilatorsOfModulesOverTheFrobeniusSkewPolynomialRing,SchwedeFAdjunction,KumarMehtaFiniteness,SchwedeTuckerNumberOfFSplit,HunekeWatanabeUpperBoundMultiplicityFpireRIngs}. Following \cite{SchwedeCentersOfFPurity}, we call the prime ideals in this lattice \emph{centers of $F$-purity for $(R,\sC)$} and $\sp(R,\sC)$ is the \emph{maximal center of $F$-purity} of $(R,\sC)$. These notions have been traditionally studied in tight closure and $F$-singularity theory. For example, $(R,\sC)$ is said to be \emph{$F$-pure} if $\sp(R,\sC) \subset R$ is proper and \emph{$F$-regular} if $R$ is simple as a $\sC$-module. If $R$ is Cohen--Macaulay and $M=\omega_R$, analogous considerations lead to the notions of $F$-injectivity and $F$-rationality. By definition, a general Cartier module $(M,\sC)$ is $F$-regular if $\uptau(M,\sC)=M$ which is a subtler notion of simplicity.

Suppose that $(R,\sC)$ is $F$-pure, so $R/\sp(R,\sC)$ is a simple Cartier module under the induced action of $\sC$. To $(R,\sC)$ we associate a number $r(R,\sC) \in (0,1]$ measuring such $F$-regularity. This number is called the \emph{splitting ratio} of $(R,\sC)$ and the larger it is the ``milder'' the singularities of the $F$-regular pair $(R/\sp(R,\sC),\sC)$ are. When $\sp(R,\sC)=0$, the splitting ratio becomes the $F$-\emph{signature} of $(R,\sC)$ and is denoted by $s(R,\sC)$. 

Let $f\: \Spec S \to \Spec R$ be a finite cover, \ie a dominant finite morphism (between $F$-finite and noetherian schemes). How does the $F$-regularity of $(M,\sC)$ pull-back along $f$? Following \cite{BlickleStablerFunctorialTestModules}, we see that a precise, natural way to formulate this question is: How does the $F$-regularity of $(M,\sC)$ and $(f^!M, f^*\sC)$ compare to each other? See \autoref{sec.Preliminaries} for details on how this Cartier-theoretic pullback is defined. Our main result in this direction is the following:

\begin{theoremA*}[\autoref{theo.TraceTauSurjective}]
Let $f\: \Spec S \to \Spec R$ be a finite cover between $F$-finite noetherian schemes. Let $\sC$ be a Cartier $R$-algebra and $M$ be a Cartier $\sC$-module. Then, 
\begin{equation}
\label{eqn.TransRuleTestModules}
\Tr_{M}\big(f_\ast \uptau (f^! M, f^*\sC ) \big) = \uptau(M,\sC),
\end{equation}
where $\Tr_M: f_*f^{!}M \to M$ is the Grothendieck trace map (evaluation at $1$).
In particular, $\Tr_M$ is surjective if $(M,\sC)$ is $F$-regular. Conversely, if $\Tr_M$ is surjective and $(f^! M, f^*\sC)$ is $F$-regular, then $(M,\sC)$ is $F$-regular.
\end{theoremA*}

This has the salient application of telling us why, for an $F$-regular Cartier module $(M,\sC)$, the trace maps $\Tr_M: f_*f^{!}M \to M$ are surjective for all $f$, which for $M=R$ specializes to saying that $R$ is a splinter. See \autoref{subsubsec.RelationshipSplinters} for more on this.

In practice, very often the Cartier module $M$ of interest is divisorial, \ie $M=R(D)$ for some (generalized) divisor $D$ on $\Spec R$ (\eg $D=K_R$ some canonical divisor). In that case, our interest is on $S(f^*D)$ rather than $f^!\big(R(D)\big)=S(f^*D-K_{S/R})$. However, it is then not clear what the right question should be as $S(f^*D)$ has no natural Cartier $f^*\sC$-module structure. Our solution to this is inspired by \cite{SchwedeTuckerTestIdealFiniteMaps}: we consider a natural transformation $f^{\dagger} \to f^!$ where $f^{\dagger}$ is the functor from $R$-modules to $S$-modules obtained by pullback followed by $\mathbf{S}_2$-ification (we require some technical $\mathbf{G}_i + \mathbf{S}_j$ conditions on $R$ and $S$ for this to work). For example, $f^{\dagger}\big(R(D)\big)=S(f^* D)$. The choice of such natural transformation is tantamount to the choice of global section of $\omega_{S/R}$, which is the choice of a map $T \in \Hom_R(S,R)$. Then, we define a Cartier $\sC$-module $M$ to be \emph{$T$-transposable} so that $f^{\dagger} M \to f^! M$ is a morphism of Cartier $f^*\sC$-modules. In that case, under suitable, mild conditions, $f^{\dagger} M \to f^! M$ will induce an isomorphism after applying the test module functor $\uptau$; see \autoref{cor.varSigmatestModules}. See \autoref{sec.Transposability} for the details about transposabilty and $f^{\dagger}$.
In particular, we obtain the following result:
\begin{theoremB*}[{\autoref{cor.SpecializationTransposability}, \cf \cite{SchwedeTuckerTestIdealFiniteMaps}}]
Let $f\: \Spec S \to \Spec R$ be as in Theorem A and further assume that $R$ and $S$ satisfy the $\mathbf{G}_1+\mathbf{S}_2$ condition (\eg normal). Suppose that either $R$ is local or of finite type over a field (so that we can ensure $F^!\omega \cong \omega$). Choose an element $T \in \omega_{S/R}$ such that the $S$-linear map $S \to \omega_{S/R}$ defined by $1 \mapsto T$ is generically an isomorphism. Then, for all almost Cartier divisors $D$ on $\Spec R$ we have:
\begin{equation}
\label{eqn.DivisorialCase}
T\big(f_*\uptau (S(f^\ast D),f^*\sC)\big) = \uptau(R(D),\sC)
\end{equation}
if $R(D)$ is a $T$-transposable Cartier $\sC$-module.
\end{theoremB*}

Theorem B should be thought of as a generalization of the main results in \cite[\S6]{SchwedeTuckerTestIdealFiniteMaps} by Schwede--Tucker; see \autoref{cor.SpecializationTransposability} for the details. However, we have substantially weakened the normality hypothesis; the conditions we assume are the bare minimum for which such transformation rules are possible.

To know when $R(D)$ is a $T$-transposable, we extend the Schwede--Tucker criterion for $T$-transposability; see \autoref{thm.TransposabilityCriterion}. We explain how the formula \autoref{eqn.DivisorialCase} generalizes those in \cite{SchwedeTuckerTestIdealFiniteMaps} when $D=0$ and $\sC$ is the Cartier algebra of triples; see \autoref{sec.testmodulefinitemorphism}. Likewise, we obtain analogous formulas for \emph{test ideals along closed subschemes} (as treated in  \cite[\S 3.1]{Smolkinphdthesis}, \cite[\S4]{SmolkinSubadditivity}); see \autoref{theo.adjointidealtransformation}. Similarly, \autoref{cor.CohenMacualayCanonicalModulesTrans} contains the special cases for canonical modules.  Analogous results for \emph{non-$F$-pure modules} are established in \autoref{sec.NonFPuremodulesResults}.

In the local case, $F$-regularity can be measured by invariants other than test modules. Our main result in this case is the following theorem. 

\begin{theoremC*}[{\autoref{thm.MainTheorem} and \autoref{thm.TransRuleSplittingRatios}, \cf \cite[Theorems 3.1, 4.4]{CarvajalSchwedeTuckerEtaleFundFsignature}, \cite[Theorem 4.11]{CarvajalFiniteTorsors}}]
Let $\theta\: (R,\fram) \to (S, \fran)$ be a finite local extension between $F$-finite rings defining a cover $f\: \Spec S \to \Spec R$. Suppose that $R$ is an integral domain with field of fractions $K$, set $L \coloneqq S \otimes_R K$, and write $[L:K] \coloneqq \dim_K L$. Suppose that there is a \emph{generic} isomorphism $\sigma_R\: S \to \omega_{S/R}$ of $S$-modules such that $T\coloneqq\sigma_R(1)$ is surjective and $T(\fran) \subset \fram$. Let $\sC$ be a Cartier $R$-algebra acting on $R$ such that $R$ is a $T$-transposable Cartier $\sC$-module (\eg $\sigma$ is an isomorphism). Then: 
\begin{equation}
\sp\big(S,f^* \sC\big)\cap R = \sp\big(R,\sC\big)
\end{equation}
and
\begin{equation}
\big[\kappa(\fran):\kappa(\fram)\big] \cdot r(S,f^* \sC ) = \big[\kappa\big(\sp(S,f^* \sC)\big):\kappa\big(\sp(R,\sC)\big)\big] \cdot r(R,\sC),
\end{equation}
where $\kappa(-)$ denotes residue fields. In particular, $(R,\sC)$ is $F$-pure (resp. $F$-regular) if and only if so is $(S,f^*\sC)$. In the $F$-regular case:
\begin{equation} \label{eqn.TransRuleFSign}
\big[\kappa(\fran):\kappa(\fram)\big]\cdot s(S,f^* \sC) = [L:K] \cdot s(R,\sC).
\end{equation}
\end{theoremC*}
As we explain in \autoref{rem.Comparison}, the transformation rule \autoref{eqn.TransRuleFSign} generalizes those in \cite[Theorems 3.1, 4.4]{CarvajalSchwedeTuckerEtaleFundFsignature}, \cite[Theorem 4.11]{CarvajalFiniteTorsors}, which focus on rather specific setups.

These results illustrate the crucial role transposability plays in the behavior of Cartier modules under finite covers. Bearing this in mind, we revisit Schwede--Tucker's transposability criterion in \autoref{sec.SchwedeTuckerFaithfullyFlat}. There, we study how norm functions can be used to translate the criterion from effectiveness of divisor upstairs to effectiveness of divisors downstairs, which tend to be simpler as illustrated by Noether normalizations of Cohen--Macaulay singularities.

\subsection*{Acknowledgements} 
 We would like to thank Manuel Blickle, Manfred Lehn, Linquan Ma, Zsolt Patakfalvi, Karl Schwede, Anurag Singh, Ilya Smirnov, Daniel Smolkin, and Maciej Zdanowicz for very useful discussions and help throughout the preparation of this preprint. We are very thankful to Karl Schwede for suggesting to us many ways to improve a first version of this work such a removing normality from our hypothesis and working out a transformation rule for adjoint ideals under finite covers. Further, we thank Anne Fayolle for reading through our manuscript making many corrections and providing enlightening insights, especially in our treatment of \autoref{theo.adjointidealtransformation}. The first named author commenced working on this project while in his last year of Graduate School at the Department of Mathematics of the University of Utah. He is greatly thankful for their hospitality and support. He is particularly thankful to his advisor Karl Schwede for his guidance and generous support. Finally, we are grateful to the anonymous referees for their valuable comments and corrections.

\section{Preliminaries} \label{sec.Preliminaries}
We recall the notions of Cartier modules, $F$-signature, splitting primes, splitting ratios, and test modules as we employ them here. However, we recall first some generalities about Hartshornes's theory of generalized divisors \cite[\S 2]{HartshonreGeneralizedDivisorsAndBiliaison}. The main takeaway of \autoref{sec.GeneralizedDivisors} is \autoref{prop.MApsAndDivisors} as well as our conventions (\eg \autoref{con.GeneralCOnventionFFinitenessAndNoetherianity}, \autoref{rem.COndition(!)}, \autoref{term.Sheafification}), and it could be skipped by experts. 

\begin{convention} \label{con.GeneralCOnventionFFinitenessAndNoetherianity}
All schemes and rings are defined over $\mathbb{F}_p$ and assumed to be noetherian and $F$-finite. We denote the $e$-th iterate of the Frobenius endomorphism by $F^e \: X \to X$ if $X$ is a scheme or by $F^e \: R \to F^e_* R$ if $R$ is a ring. We use the shorthand notation $q \coloneqq p^e$. We refer to a finite dominant morphism as a \emph{finite cover} or simply as a \emph{cover}. We shall denote the category of finitely generated $R$-modules by $R\textnormal{-fmod}$.
\end{convention}

\subsection{Generalized divisors, duality, and canonical modules} \label{sec.GeneralizedDivisors} The following is the version of Grothendieck duality that we use throughout; see \cite[III, Ex. 6.10]{Hartshorne}. 

\begin{proposition}[Grothendieck duality for finite covers] \label{pro.GroDuality}
 Let $f\: \Spec S \to \Spec R$ be a finite cover and $f^!$ be the functor $R\textnormal{-fmod} \to S\textnormal{-fmod}$ given by $f^!=\Hom_R(S, -)$. Then, the morphism of $R$-modules
\[
\xi=\xi(M,N)\: f_* \Hom_S(N,f^!M) \to \Hom_R(f_*N, M), \quad \psi \mapsto \Tr_M \circ f_*\psi
\]
is a natural isomorphism on both $M$ and $N$. Here, $\Tr_M \: f_*f^!M \to M$ is the \emph{trace natural transformation} given by the evaluation-at-$1$ map. The inverse $\zeta=\zeta(M,N)$ is given by $\zeta(\vartheta)(n)\coloneqq\vartheta(- \cdot n)$ for all $n\in N$. In other words, $\big(\zeta(\vartheta)(n)\big)(s) \coloneqq \vartheta(s \cdot n)$ for all $n\in N, s \in S$.
By gluing on affine charts, the same duality applies to a general cover $f\: Y \to X$.
\end{proposition}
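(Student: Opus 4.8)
The plan is to recognize $\xi$ as the classical adjunction isomorphism between restriction of scalars along $R\to S$ (which on modules is the functor $f_*$) and the coinduction functor $\Hom_R(S,-)$, and then to verify the explicit formula for the inverse by a direct computation on elements; the statement for a general cover follows formally by naturality. Concretely, I would first pin down the module structures involved: for an $R$-module $M$, the $S$-module $f^!M=\Hom_R(S,M)$ carries the action $(s\cdot\phi)(t)\coloneqq\phi(st)$ for $s,t\in S$; for an $S$-module $N$, the abelian group $\Hom_S(N,f^!M)$ is an $S$-module (via the target, equivalently the source, since $S$ is commutative), and $f_*$ merely restricts it to an $R$-module. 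With these conventions, unwinding the definition of $\Tr_M$ as evaluation at $1$ gives $\xi(\psi)(n)=\psi(n)(1)$ for $n\in N$, which is visibly $R$-linear in $n$ and additive and $R$-linear in $\psi$, so $\xi$ is a well-defined morphism of $R$-modules, natural in $M$ and $N$ by inspection.

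Next I would check that $\zeta$ lands where claimed. For $\vartheta\in\Hom_R(f_*N,M)$ and $n\in N$, the map $s\mapsto\vartheta(s\cdot n)$ is $R$-linear because $\vartheta$ is, so $\zeta(\vartheta)(n)\in\Hom_R(S,M)=f^!M$; and $n\mapsto\zeta(\vartheta)(n)$ is $S$-linear since $\zeta(\vartheta)(s'n)(s)=\vartheta\big((ss')n\big)=\zeta(\vartheta)(n)(ss')=\big(s'\cdot\zeta(\vartheta)(n)\big)(s)$, using commutativity of $S$ and the action convention above. Thus $\zeta(\vartheta)\in f_*\Hom_S(N,f^!M)$, and $\zeta$ is clearly additive and $R$-linear in $\vartheta$.

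Then I would verify that $\xi$ and $\zeta$ are mutually inverse by a one-line computation each: $\xi\big(\zeta(\vartheta)\big)(n)=\zeta(\vartheta)(n)(1)=\vartheta(1\cdot n)=\vartheta(n)$, and $\zeta\big(\xi(\psi)\big)(n)(s)=\xi(\psi)(s\cdot n)=\psi(s\cdot n)(1)=\big(s\cdot\psi(n)\big)(1)=\psi(n)(s)$, where the third equality uses $S$-linearity of $\psi$ and the last uses the definition of the $S$-action on $f^!M$. Hence $\xi$ is an isomorphism with inverse $\zeta$, and naturality in both variables is a routine diagram chase: a morphism $M\to M'$ induces compatible maps on $f^!(-)$ and on $\Tr$, a morphism $N'\to N$ induces compatible maps by precomposition, and in each case the formula $\xi(\psi)(n)=\psi(n)(1)$ makes the squares commute on the nose. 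Finally, the global statement for a finite (hence affine) cover $f\:Y\to X$ follows by applying the affine case over an affine open cover of $X$ and its preimage in $Y$ and gluing, which is legitimate precisely because $\xi$ and $\zeta$ are natural. I do not anticipate a genuine obstacle: the only point demanding care is the bookkeeping of the several $R$- and $S$-module structures, in particular the action on $\Hom_R(S,M)$, which is exactly what makes the displayed formula for $\zeta$ both well defined and two-sided inverse to $\xi$.
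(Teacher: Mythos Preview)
Your proposal is correct and follows exactly the approach the paper has in mind: the paper simply states the explicit inverse $\zeta$ within the proposition itself and cites \cite[III, Ex.~6.10]{Hartshorne}, leaving the routine verification that $\xi$ and $\zeta$ are mutually inverse to the reader, which is precisely what you carry out. Your element-level checks of well-definedness, $S$-linearity of $\zeta(\vartheta)$, and the two one-line composites are the standard ones and match the paper's implicit argument.
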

\begin{remark} \label{re,.GroDualityII}
The maps $\xi$, $\zeta$ in \autoref{pro.GroDuality} preserve the $S$-linear structures. Thus, we may think of $\xi$ as a natural $S$-linear isomorphism $\xi' \: \Hom_S(N,f^!M) \to \Hom_R(f_*N, M)$,  where the $S$-linear structure of the target is given by scalar pre-multiplication. We shall refer to these natural $S$-linear isomorphisms as Grothendieck duality too. Thus, $\Hom_S(N,f^!M) \to \Hom_R(N,M)$, $\psi \mapsto \Tr_M \circ \psi$, is an isomorphism of abelian groups that preserves both $R$-linear and $S$-linear structures.
\end{remark}

\begin{terminology}
Let $X$ be a scheme. Following Hartshorne, we say that a coherent sheaf $\sF$ on $X$ satisfies Serre's condition $\mathbf{S}_k$ if $\depth \sF_x \geq \min\{k,\dim \sO_{X,x}\}$ for all $x \in X$. We say that $X$ satisfies $\mathbf{S}_k$ if so does $\sO_X$. We say that $X$ satisfies the $\mathbf{G}_l$ condition if $\sO_{X,x}$ is a Gorenstein local ring for all points $x\in X$ of codimension $\leq l$. Naturally, the $\mathbf{G}_l + \mathbf{S}_k$ condition means that $X$ satisfies both $\mathbf{G}_l$ and $\mathbf{S}_k$. The same terminology is applied to the ring/module theoretic setting.
\end{terminology}

\begin{definition}[Canonical modules] \label{def.CanonicalModules}
Let $X$ be a noetherian equidimensional scheme. $X$ \emph{admits a canonical module/sheaf} if there are morphisms $i \circ f\: X \to S \to G$ where $f$ is a finite cover, $i$ is a closed embedding, and $G$ is a Gorenstein scheme. If $\omega_G$ is a fixed dualizing invertible sheaf on $G$, the \emph{canonical module} of $X$ is defined by $\omega_X \coloneqq f^! \omega_S$ where $\omega_S \coloneqq  {\sE}xt_G^{\delta}(\sO_S, \omega_G) = \mathbf{H}^{-\dim S}\big(i^!\omega_G[\dim G]\big)$, and $\delta = \dim G - \dim S$. If $g\:Y \to X$ is a finite cover between schemes admitting canonical modules, we always make the choice of canonical modules compatible with $g$, that is, we always set $\omega_Y \cong g^!\omega_X$; see \eg \autoref{rem.COndition(!)}. 
\end{definition}

\begin{caveat}
In \cite{HartshonreGeneralizedDivisorsAndBiliaison}, the scheme $G$ in \autoref{def.CanonicalModules} is assumed to be regular. However, the results in \cite[\S 1, \S 2]{HartshonreGeneralizedDivisorsAndBiliaison} hold in our more general setup by \cite{HerzogKunzCanonicalModules}.
\end{caveat}

\begin{remark} \label{rem.COndition(!)}
We say that an $F$-finite scheme $X$ satisfies property $(!)$ if it admits a canonical module satisfying
\begin{equation}
\label{eqn.!}
F^! \omega_X \cong \omega_X. \tag{$!$}
\end{equation}
This includes the local case and the (essentially) of finite type case. Indeed, $F^! \omega_X$ is a dualizing sheaf and thus by \cite[Chapter V, Theorem 3.1]{HartshorneResidues} one has $\omega_X \cong F^! \omega_X \otimes \mathcal{L}$ for some invertible sheaf $\mathcal{L}$, which implies the local case. In the essentially of finite type case, say $f\: X \to \Spec \kay$, we may take $\omega_X \coloneqq f^! \kay$ since $\kay \cong F^! \kay$. It is however not always possible to satisfy \eqref{eqn.!} in the general $F$-finite case; see \cite[\S 2.5]{SchwedeTuckerTestIdealFiniteMaps}.
\end{remark}

\begin{terminology}[\cite{HartshonreGeneralizedDivisorsAndBiliaison}] \label{term.Sheafification}
Let $X$ be a scheme admitting a canonical module $\omega_X$. For a quasi-coherent sheaf $\sF$, one defines $\sF^{\omega} \coloneqq \ssHom_X(\sF,\omega_X)$ and refers to it as its \emph{$\omega$-dual}. Consider the natural $\sO_X$-linear maps $\alpha_{\sF} \: \sF \to \sF^{\omega \omega}$. One refers to $\alpha$ as the \emph{${\omega}$-reflexification} or as the \emph{$\mathbf{S}_2$-ification natural transformation} when $\sF$ satisfies $\mathbf{S}_1$; see \cite[Remark 1.8]{HartshonreGeneralizedDivisorsAndBiliaison}. One says that a coherent $\sO_X$-module $\sF$ is \emph{$\omega$-reflexive} if $\alpha_{\sF}$ is an isomorphism and we denote the corresponding full subcategory by $\sO_X\textnormal{-mod}^{\omega}$. We apply the same terminology to the ring/module theoretic setting.
\end{terminology}

\begin{lemma}
Let $X$ be a scheme satisfying $\mathbf{S}_1$ and admitting a canonical module. If $f\: Y \to X$ is a cover, then $f^!$ restricts to a functor $f^! \: \sO_X\textnormal{-mod}^{\omega} \to \sO_Y\textnormal{-mod}^{\omega}$. 
\end{lemma}
\begin{proof}
By \cite[Lemma 1.3]{HartshonreGeneralizedDivisorsAndBiliaison}, $\omega_X$ satisfies $\mathbf{S}_2$. Further, $\omega$-reflexivity and $\mathbf{S}_2$ are equivalent conditions; 
see \cite[Proposition 1.5]{HartshonreGeneralizedDivisorsAndBiliaison}. Moreover, the $\omega$-dual of any coherent sheaf is $\omega$-reflexive \cite[Corollary 1.6]{HartshonreGeneralizedDivisorsAndBiliaison}. In particular, $\ssHom_X(\sF,\sG)$ is $\omega$-reflexive if so is $\sG$ by $\tensor$-Hom adjointness. Additionally, we have natural isomorphisms $f_*(\sG^{\omega}) \cong (f_*\sG)^{\omega}$---provided by Grothendieck duality---for all coherent $\sO_Y$-modules $\sG$.
\end{proof}

Let $X$ be a scheme satisfying the $\mathbf{S}_2$ condition and admitting a canonical module $\omega_X$. In particular, $\sO_X$ is $\omega$-reflexive (\cite[Proposition 1.5]{HartshonreGeneralizedDivisorsAndBiliaison}). Let $\sK(X)$ denote its sheaf of total fractions, which has good properties as $X$ satisfies $\mathbf{S}_1$ \cite[Proposition 2.1]{HartshorneGeneralizedDivisorsOnGorensteinSchemes}. For instance, $\sK(X) = \bigoplus_{\eta} j_* \sO_{X,\eta}$ where the sum runs over the generic points $\eta$ of $X$ and $j\:\{\eta\} \to X$ is the inclusion. In particular, a coherent $\sO_X$-module $\sF$ is supported in codimension $\geq 1$ if and only if $\sK(X) \otimes \sF = 0$, and if and only if $\sF$ is locally annihilated by regular elements of $\sO_X$.

\begin{definition}
In the situation above we have the following notions:
\begin{enumerate}[(a)]
\item A \emph{fractional ideal} is a coherent subsheaf $\sI \subset \sK(X)$. A fractional ideal $\sI$ is said to be \emph{non-degenerate} if $\sI$ is supported in codimension $0$, \ie $\sI_\eta \neq 0$ for each generic point $\eta \in X$. In particular, $\sI_{\eta} \cong \sO_{X,\eta}$ for all generic points $\eta$ and so $\sK(X) \otimes \sI \cong \sK(X)$.
\item{ A \emph{generalized divisor} on $X$ is a non-degenerate fractional ideal $\sI \subset \sK(X)$ satisfying the $\mathbf{S}_2$ condition. We will follow the usual convention of denoting a generalized divisor by $D$ and refer to $\sI_D$ as the fractional ideal associated to $D$. Further, we denote
\[
\sO_X(D) \coloneqq \sI_D^{\vee} = \ssHom_X(\sI_D, \sO_X) \cong \sI_{D}^{-1} = \sI_{-D},
\]
and $
\omega_X(D) \coloneqq \ssHom_X(\sI_D,\omega_X)$, which Hartshorne denotes by $\sL(D)$ and $\sM(D)$; respectively. A pair of generalized divisors $D,E$ are added as follows:
\[
\sI_{D+E} \coloneqq (\sI_D \cdot \sI_E)^{\omega \omega}.
\]
Following Hartshorne, we define another generalized divisor by \[
(D)(-E) \coloneqq \ssHom_X(\sI_E, \sI_D)^{\omega\omega}.
\]
\item A generalized divisor $D$ is said to be \emph{Cartier} (resp. \emph{principal}) if $\sI_{D}$ is an invertible sheaf (resp. $\sI_{D} \cong \sO_X$). If $D$ is principal and $\sI_D = \sO_X \cdot g$ for some invertible global section $g \in K(X)$, we write $D = \Div_X g = \Div g$.
\item A generalized divisor $D$ is said to be \emph{almost Cartier} if there is a big open $U \subset X$ (\ie $U$ contains every codimension $1$ point of $X$) such that $(\sI_D)|_{U}$ is Cartier.} We denote by $\ADiv X$ the group of almost Cartier divisors on $X$ (under the addition law from (b)). 
\item We say that two divisors $D, E$ are \emph{linearly equivalent} if $\sI_D \cdot g = \sI_E$, where $g \in \sK(X)$ is an invertible global section. We denote the group of almost Cartier divisors modulo linear equivalence by $\APic X$,
\item  Let $f \: Y \to X$ be a cover, so that $Y$ satisfies the $\mathbf{S}_2$ condition and admits a canonical module $\omega_Y \cong f^!\omega_X$. We define the \emph{pullback homomorphism of almost Cartier divisors} \[f^\ast\: \ADiv X \to \ADiv Y\] as follows: Let $D$ be an almost Cartier divisor on $X$. Let $i \: U \to X$ be a big open such that $i^* \sI_D$ is Cartier and $j \: V \to Y$ be the pullback of $i$ along $f$, which is a big open too. Then, we define $f^*D$ by $\sI_{f^*D} \coloneqq j_* f_U^* (i^*\sI_D)$ (where $f_U\: V \to U$ is the pullback of $f$ to $U$), which is an almost Cartier divisor. We note that $f^\ast$ is compatible with linear equivalence. Furthermore, we obtain a homomorphism of abelian groups $f^* \: \APic X \to \APic Y$; see \cite[Definition after Corollary 2.3]{HartshonreGeneralizedDivisorsAndBiliaison}.
\item For a given abelian group $A$ (such as $\bQ$ or $\bZ_{(p)}$), we define an \emph{almost Cartier $A$-divisor} to be an element of $A \otimes_{\bZ} \ADiv X \eqqcolon \ADiv_A X$. We define $\APic_A X$ similarly, so that we have a surjective homomorphism $\ADiv_A X \to \APic_A X$ (obtained by tensoring $\ADiv X \to \APic X$ with $A$).

We say that two almost Cartier $A$-divisors $D_1$, $D_1$ are \emph{$A$-linearly equivalent} and write $D_1 \sim_A D_2$ if the difference $D_1 - D_2$ is in $\ker(\ADiv_A X \to \APic_A X)$. If $A = \bQ$ (resp. $A = \bZ_{(p)}$), $D_1 \sim_{\bQ} D_2$ (resp. $D_1 \sim_{\bZ_{(p)}} D_2$) if and only if there is $n\in \bZ$ nonzero (resp. prime-to-$p$) such that $nD_1 \sim nD_2$ in $\ADiv X$.

The pullback of almost Cartier $A$-divisors $f^* \: \ADiv_A X \to \ADiv_A Y $ is defined by twisting the pullback of almost Cartier divisors by $A$.
\end{enumerate}
\end{definition}


\begin{lemma}
Let $X$ be a scheme satisfying the $\mathbf{G}_0+\mathbf{S}_2$ condition and admitting a canonical module $\omega_X$.
\begin{enumerate}
\item There is an embedding $\omega_X \to \sK(X)$ realizing $\omega_X$ as a non-degenerate fractional ideal, say $\sI_{-K_X}$, which defines an \emph{anticanonical (generalized) divisor} $-K_X$. More generally, if $D$ is a generalized divisor then $\omega_X(D)$ is also a generalized divisor. Further, if either $X$ is $\mathbf{G}_1$ or $D$ is almost Cartier, we have the following (canonical) relation \begin{equation} \label{eqn.RelationOandOmega}
\sO_X(D) \cong \omega_X\big(D+(-K_X)\big).
\end{equation}
\item Assume now that $X$ also satisfies $\mathbf{G}_1$, then one defines a \emph{canonical divisor} $K_X$ on $X$ by $K_X \coloneqq -(-K_X)$, which is an almost Cartier divisor and $\sO_X(K_X) \cong \omega_X$. Furthermore, $\omega_X(D)$ is a \emph{reflexive} generalized divisor for any  generalized divisor $D$.
\end{enumerate}
\end{lemma}
\begin{proof}
For (a), since $X$ is $\mathbf{G}_0$, the sheaf $\omega_X$ is generically free of rank $1$ (\ie free of rank $1$ at every generic point of $X$). Using \cite[2.4]{HartshonreGeneralizedDivisorsAndBiliaison}, we find an embedding $\omega_X \to \sK(X)$ realizing $\omega_X$ as a non-degenerate fractional ideal; see \cite[Definition--Remark 2.7]{HartshonreGeneralizedDivisorsAndBiliaison}. Note that $\omega_X(D)$ is $\omega$-reflexive and generically free of rank $1$. Hence, by \cite[Proposition 2.8]{HartshorneGeneralizedDivisorsOnGorensteinSchemes}, $\omega_X(D)$ is isomorphic to $\sI_E$ for some generalized divisor $E$, namely $E=(-K_X)(-D)$). Since $X$ satisfies $\mathbf{S}_2$, $\sO_X$ is $\omega$-reflexive; see \cite[Proposition 1.5]{HartshonreGeneralizedDivisorsAndBiliaison}. In other words, $\sO_X$ defines a generalized divisor. Further, $\sO_X(D)$ is $\omega$-reflexive for all generalized divisors. Hence, we have a canonical isomorphism $\sO_X(D) \to \ssHom_X\big(\omega_X, \ssHom(\sI_{-D}, \omega_X)\big)$. However, $\ssHom_X(\sI_{-D}, \omega_X) = \sI_{D + (-K_X)}$ if either $D$ or $-K_X$ is almost Cartier by \cite[Proposition 2.8.(c)]{HartshonreGeneralizedDivisorsAndBiliaison}. Putting these together yields $\sO_X(D) \cong \ssHom_X(\omega_X, \sI_{D+(-K_X)}) = \omega\big(D+(-K_X)\big)$.


For (b), we have $\sO_X(K_X) \cong \omega_X$ by \autoref{eqn.RelationOandOmega}. Using \cite[Proposition 2.2 (d)]{HartshonreGeneralizedDivisorsAndBiliaison}, we have $\omega_X(D) \cong \sO_X(D+K_X)$, which is reflexive.
\end{proof}

Next proposition establishes the connection between generalized divisors and maps.

\begin{proposition} \label{prop.MApsAndDivisors}
Let $f\:Y \to X$ be a finite cover between schemes satisfying $\mathbf{G}_0+\mathbf{S}_2$. Let $E$ be a reflexive divisor on $Y$ and $D$ be an almost Cartier divisor on $X$. Then, there is a canonical isomorphism of $\sO_X$-modules:
\begin{equation} \label{eqn.CanonicalCorrespondanceMapsDivisorsO_X}
\ssHom_X\big(f_*\omega_Y(E),\omega_X(D)\big) \cong f_*\sO_Y(f^*D-E),
\end{equation}
 which can also be thought of as an isomorphism of $\sO_Y$-modules
\begin{equation} \label{eqn.CanonicalCorrespondanceMapsDivisorsO_Y}
\ssHom_X\big(f_*\omega_Y(E),\omega_X(D)\big) \cong \sO_Y(f^*D-E),
\end{equation}
as in \autoref{re,.GroDualityII}. In particular, to any non-degenerate map $\varphi \: f_*\omega_Y(E) \to \omega_X(D)$ there corresponds an effective generalized divisor $ D_{\varphi} \sim f^*D-E$ (on $Y$).\footnote{The non-degeneracy of $\varphi$ means that the corresponding morphism $\sO_Y \to \ssHom_X\big(f_*\omega_Y(E), \omega_X(D)\big)$ is injective. In other words, $\varphi$, as a global section of the $\sO_Y$-module $\ssHom_X\big(f_*\omega_Y(E), \omega_X(D)\big)$, is supported generically (\ie $\varphi$ is locally a regular element). Recall we assume $Y$ satisfies $\mathbf{S}_1$.}

Further, let $g\: Y' \to Y$ be a finite cover of $X$-schemes satisfying $\mathbf{G}_0+ \mathbf{S}_2$, and set $f' = f \circ g \: Y' \to X$. Let $E'$ be a reflexive generalized divisor on $Y'$, and let $E$ and $D$ be almost Cartier divisors on $Y$ and $X$ respectively. Then the following equality holds for all non-degenerate
$\psi \in \Hom_Y\big(g_*\omega_{Y'}(E'),\omega_Y(E)\big)$ and $\varphi \in \Hom_X\big(f_*\omega_Y(E),\omega_X(D)\big)$:
\begin{equation} \label{eqn.NaturalityMapsAndDivisors}
D_{\varphi \circ f_*\psi} = D_{\psi} + g^* D_{\varphi}.
\end{equation}
\end{proposition}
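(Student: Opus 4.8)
The plan is to prove the two displayed isomorphisms \eqref{eqn.CanonicalCorrespondanceMapsDivisorsO_X} and \eqref{eqn.CanonicalCorrespondanceMapsDivisorsO_Y} first, and then deduce the naturality compatibility \eqref{eqn.NaturalityMapsAndDivisors} formally. For the isomorphism, I would start from Grothendieck duality for the finite cover $f$ (\autoref{pro.GroDuality}) together with \autoref{re,.GroDualityII}, which already gives
\[
\ssHom_X\big(f_*\omega_Y(E),\omega_X(D)\big) \cong f_*\ssHom_Y\big(\omega_Y(E), f^!\omega_X(D)\big),
\]
and likewise as an $\sO_Y$-module isomorphism. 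So the task reduces to identifying the right-hand side with $f_*\sO_Y(f^*D-E)$, respectively $\sO_Y(f^*D-E)$. The first step here is to compute $f^!\omega_X(D)$: since $D$ is almost Cartier and $Y,X$ satisfy $\mathbf{G}_0+\mathbf{S}_2$, one checks on a big open $U\subseteq X$ where $\sI_D$ is Cartier that $f^!\big(\omega_X(D)\big)\cong f^!(\omega_X)\otimes f^*\sO_X(D)$ (projection formula for the invertible sheaf $\sO_U(D)$), and then extends over the big open by $\mathbf{S}_2$-reflexivity, using that $\omega$-duals are $\omega$-reflexive. With the compatible choice of canonical modules $\omega_Y=f^!\omega_X$ (\autoref{def.CanonicalModules}), this gives $f^!\omega_X(D)\cong \omega_Y(f^*D)$ as $\omega$-reflexive generalized divisors on $Y$.

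Granting that, the second step is the identification
\[
\ssHom_Y\big(\omega_Y(E),\omega_Y(f^*D)\big)\cong \sO_Y(f^*D-E),
\]
which is exactly the statement that for $\omega$-reflexive generalized divisors $A,B$ on a $\mathbf{G}_0+\mathbf{S}_2$ scheme one has $\ssHom_Y(\omega_Y(A),\omega_Y(B))\cong\sO_Y(B-A)$; this follows from the $\tensor$-Hom adjointness and reflexivity machinery recalled before \autoref{eqn.RelationOandOmega}, since $\omega_Y(A)\cong\sO_Y(A+K_Y)$, $\omega_Y(B)\cong\sO_Y(B+K_Y)$, and the Hom-sheaf of two rank-$1$ reflexive fractional ideals is again reflexive and computes the difference divisor. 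Pushing forward by $f_*$ gives \eqref{eqn.CanonicalCorrespondanceMapsDivisorsO_X}, and reading the $S$-module structure coming from scalar pre-multiplication (\autoref{re,.GroDualityII}) gives \eqref{eqn.CanonicalCorrespondanceMapsDivisorsO_Y}. The ``in particular'' about $D_\varphi$ is then just the observation that a non-degenerate global section of $\sO_Y(f^*D-E)$ is (locally) a regular element of $\sK_Y$ lying in the fractional ideal $\sI_{-(f^*D-E)}$, hence its divisor of zeros is an effective divisor linearly equivalent to $f^*D-E$; here one uses that $Y$ satisfies $\mathbf{S}_1$ so that the sheaf of total fractions behaves well (as recalled before \autoref{eqn.RelationOandOmega}).

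For the final naturality assertion \eqref{eqn.NaturalityMapsAndDivisors}, the key point is that the isomorphism \eqref{eqn.CanonicalCorrespondanceMapsDivisorsO_X} is natural in all the data, which is built into Grothendieck duality and the projection-formula identifications above. Concretely: $\varphi\circ f_*\psi$ corresponds under the chain of natural isomorphisms to the composite of the section corresponding to $\psi$ with $g^*$ applied to the section corresponding to $\varphi$; since composition of non-degenerate sections of rank-$1$ reflexive fractional ideals corresponds on the level of divisors to addition, and pullback of sections to pullback of divisors, one reads off $D_{\varphi\circ f_*\psi}=D_\psi+g^*D_\varphi$. I would verify this by restricting to a common big open where everything is Cartier/invertible, where the claim is the elementary statement that $\operatorname{div}(b\cdot g^*(a))=\operatorname{div}(b)+g^*\operatorname{div}(a)$ for regular sections $a,b$, and then extend over the big open by $\mathbf{S}_2$.

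The main obstacle I anticipate is the first step of the argument: pinning down $f^!\big(\omega_X(D)\big)\cong\omega_Y(f^*D)$ cleanly in the generalized-divisor (non-normal, merely $\mathbf{G}_0+\mathbf{S}_2$) setting, where one cannot simply invoke the usual reflexive-sheaf calculus for normal varieties. The delicate points are (i) that the projection formula $f^!(\sG\otimes\sL)\cong f^!\sG\otimes f^*\sL$ for $\sL$ invertible only holds on the big open a priori, so one must argue that both sides are $\omega$-reflexive to extend the isomorphism, and (ii) ensuring that the pullback $f^*D$ of the almost Cartier divisor is the correct object and that $\omega_Y(f^*D)$ is indeed $\omega$-reflexive and generically free of rank $1$, so that it corresponds to a reflexive generalized divisor as in the discussion preceding the proposition. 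Once these bookkeeping issues are dispatched, the rest is formal manipulation with duality and reflexive hulls.
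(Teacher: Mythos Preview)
Your approach is essentially the same as the paper's---Grothendieck duality, projection formula on a big open, then extension by $\omega$-reflexivity---but you apply duality first and then compute $f^!\omega_X(D)\cong\omega_Y(f^*D)$, whereas the paper first absorbs $D$ into the source (reducing to $D=0$ via $\Hom_R\big(f_*\omega_S(E),\omega_R(D)\big)\cong\Hom_R\big(f_*\omega_S(E-f^*D),\omega_R\big)$) and only then applies duality to get $f_*\Hom_S\big(\omega_S(E^*),\omega_S\big)\cong f_*\sI_{E^*}$. The two orderings are equivalent and both work.

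There is one genuine slip in your justification of $\ssHom_Y\big(\omega_Y(E),\omega_Y(f^*D)\big)\cong\sO_Y(f^*D-E)$: you invoke $\omega_Y(A)\cong\sO_Y(A+K_Y)$, but as the paper notes just after \autoref{eqn.RelationOandOmega}, a canonical divisor $K_Y$ is only defined under $\mathbf{G}_1$, while the proposition assumes only $\mathbf{G}_0$. The paper sidesteps this by observing directly that $\Hom_S\big(\omega_S(E^*),\omega_S\big)=\omega_S(E^*)^{\omega}=\sI_{E^*}^{\omega\omega}\cong\sI_{E^*}$ since $E^*$ is reflexive, and then $\sI_{E^*}\cong S(-E^*)$. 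Your route is easily repaired the same way: first take $\ssHom_Y\big(\omega_Y(E),\omega_Y\big)\cong\sI_E$ by $\omega$-reflexivity of $\sI_E$, then twist by the almost-Cartier $f^*D$ on a big open and extend. With that fix, your argument goes through under $\mathbf{G}_0+\mathbf{S}_2$ as stated.
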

\begin{proof}
Work in the affine setting. Set $X= \Spec R$, $Y=\Spec S$, $Y' = \Spec S'$. By the projection formula, the canonical morphism $f_* \omega_S \otimes_R \sI_D \to f_*\big(\omega_S(E) \otimes_S f^*\sI_D \big)$ is an isomorphism on a big open $U \subset X$ on which $D$ is Cartier. Hence, its $\omega$-dual is an isomorphism and so:
\begin{align*}
\Hom_R\big(f_*\omega_S(E),\omega_R(D)\big) \cong \Hom_R\big(f_*\omega_S(E) \otimes_R \sI_D,\omega_R\big) &\cong \Hom_R\big(f_*(\omega_S(E) \otimes_S f^*\sI_D),\omega_R\big)
\end{align*}
Next, observe that
\begin{align*}
    \big( \omega_S(E) \otimes_S f^*\sI_D \big)^{\omega \omega}  \cong \sI_{(-K_S)(-E)+f^*D} = \sI_{(-K_S)(-(E-f^*D))} \cong \omega_S(E-f^*D), 
\end{align*}
where the middle equality follows from \cite[Proposition 2.8 (b)]{HartshonreGeneralizedDivisorsAndBiliaison} and that $f^*D$ is almost Cartier. Summing up, we have canonical isomorphisms
\begin{equation} \label{eqn.FirstIsomorphism}
\Hom_R\big(f_*\omega_S(E),\omega_R(D)\big) \cong \Hom_R\big(f_*\omega_S(E-f^*D),\omega_R\big).
\end{equation}
By denoting $E^* \coloneqq E-f^*D$, we may assume $D=0$. Note that $E^*$ is reflexive by \cite[Proposition 2.2 (c), (e)]{HartshonreGeneralizedDivisorsAndBiliaison}. Notice that the isomorphism \autoref{eqn.FirstIsomorphism} is trivial at codimension $1$ points or whenever $\sI_D \cong \O_X$. Thus, we may use Grothendieck duality to conclude:
\begin{equation} \label{eqn.SecondIsomorphism}
\Hom_R\big(f_*\omega_S(E^*), \omega_R\big) \cong f_* \Hom_S\big(\omega_S(E^*), \omega_S\big) \cong f_*\sI_{E^*} = f_*\sI_{-(-E^*)} \cong f_*S(-E^*).
\end{equation}

Note that a non-degenerate element of $S(- E^*)$ is an embedding $\sI_{-E^*} \to S$ and so an effective divisor linearly equivalent to $-E^*$. Thus, we obtain a mapping $\varphi \mapsto  D_{\varphi} \in |f^*D-E|$.

We show now \autoref{eqn.NaturalityMapsAndDivisors}, \ie the naturality of $\varphi \mapsto D_{\varphi}$. As explained above, we may assume $D=0$---this will simplify the notation. Let us consider the following commutative triangle.
\[
\xymatrix{
f'_* \omega_{S'}(E') \ar[rr]^-{f_* \psi} \ar[dr]_-{\varphi'} & & f_*\omega_S(E) \ar[dl]^-{\varphi} \\
& \omega_R &
}
\]
By naturality (on the source) of the first isomorphism in \autoref{eqn.SecondIsomorphism}, this diagram becomes:
\[
\xymatrix{
g_* \omega_{S'}(E') \ar[rr]^-{\psi} \ar[dr]_-{\varphi''} & & \omega_S(E) \ar[dl]^-{x \in \sI_E} \\
& \omega_S &
}
\]
By naturality of \autoref{eqn.SecondIsomorphism} (on the target) and \autoref{eqn.FirstIsomorphism}, we obtain the commutative diagram
\[
\xymatrix{
\omega_{S'}(E'-g^*E)  \ar[dr]_-{y \in \sI_{E'-g^*E}} & & \omega_{S'}(E') \ar[dl]^-{y' \in \sI_{E'}} \ar[ll]_-{x} \\
& \omega_{S'} &
}
\]
Writing this down additively yields the claimed divisorial equality.
\end{proof}

\begin{example}
Work in the setup of \autoref{prop.MApsAndDivisors}. Setting $D,E=0$, \autoref{eqn.CanonicalCorrespondanceMapsDivisorsO_Y} means that $\Tr_{\omega_R} \: f_* \omega_Y \to \omega_X$ is a free generator of $\ssHom_X(f_*\omega_Y, \omega_X)$ as an $\sO_Y$-module. Assuming $X$ satisfies $\mathbf{G}_1$ and setting $D=-K_R$ and $E=-K_S$ in \autoref{eqn.RelationOandOmega}, then $\omega_{Y/X} \cong \sO_Y\big(f^*(-K_X)-(-K_Y)\big)$ where $\omega_{Y/X} \coloneqq f^! \sO_X$. Set $K_{Y/X} \coloneqq f^*(-K_X)-(-K_Y)$ to be the \emph{relative canonical divisor of $f$}. Thus, an injective global section $\sigma \:\sO_Y \to \omega_{Y/X}$ (\ie a non-degenerate map $T \: f_* \sO_Y \to \sO_X$) defines an effective divisor $D_T \sim K_{Y/X}$. We often write $\Ram_T = D_T$.
\end{example}

\begin{example}[Schwede's correspondence] \label{ex.SchwedeCorrespondence}
Let $X $ be a scheme satisfying the conditions $\mathbf{G}_0+\mathbf{S}_2$ and \eqref{eqn.!}; see \autoref{rem.COndition(!)}. Applying \autoref{prop.MApsAndDivisors} to $F^e \:X \to X$ yields:
\[
\ssHom_X\big(F^e_*\omega_X(D), \omega_X(D)\big) \cong F^e_*\sO_X\big((q-1)D\big)
\]
for all almost Cartier divisors $D$ on $X$. Equivalently,
\begin{equation} \label{eqn.SchwedeCorrespondence}
\ssHom_X\big(F^e_*\omega_X(D), \omega_X(D)\big) \cong \sO_X\big((q-1)D\big)
\end{equation}
where the $\sO_X$-linear structure on the left-hand side module is given by pre-multiplication. These isomorphisms can be described via \emph{Cartier operators}. Indeed, let $\kappa_X^e \: F_*^e \omega_X \to \omega_X$ be the map defined by \autoref{eqn.SchwedeCorrespondence} when $D=0$. We refer to it as the \emph{$e$-th Cartier operator} of $X$. We will see below that $\kappa_X^e$ is the $e$-th power of $\kappa_X^1 \eqqcolon \kappa_X$. Using the projection formula, we may twist $\kappa_X^e$ by an almost Cartier divisor $D$ to obtain a map $\kappa_{X,D}^e \: F^e_* \omega_X(qD) \to \omega_X(D)$. For instance, $\kappa_{X,-K_X}^e$ is the so-called Frobenius trace. Locally, a section $x$ of $\sO_X\big((q-1)D\big)$ defines a map $\sO_X(D) \to \sO_X(qD)$ and further a map $\omega_X(D) \to \omega_X(qD)$ which when composed with $\kappa^e_{X,D}$ gives $\kappa_{X,D} ^e \cdot x \: F^e_* \omega_X(D) \to F^e_* \omega_X(qD) \to \omega_X(D)$. Thus, to any non-degenerate map $\varphi \: F^e_* \omega_X (D) \to \omega_X(D)$ we associate an effective almost Cartier divisor $D_{\varphi} \sim (q-1)D$. One defines the \emph{normalized divisor of $\varphi$} by $\Delta_{\varphi} \coloneqq \frac{1}{q-1} D_{\varphi}$, which is an effective almost Cartier $\bZ_{(p)}$-divisor such that $\Delta_{\varphi} \sim_{\bZ_{(p)}} D$. Let us look at the case $\sO_X(D)=\omega_R\big(D+(-K_X)\big)$. Then, to any non-degenerate map $\varphi \: F^e_*\sO_X(D) \to \sO_X(D)$ there corresponds an effective almost Cartier $\bZ_{(p)}$-divisor $\Delta_{\varphi} \sim_{\bZ_{(p)}} D+(-K_X)$. If $X$ also satisfies $\mathbf{G}_1$, then $\Delta_{\varphi}+K_X \sim_{\bZ_{(p)}} D$. We then recover Schwede's correspondence \cite{SchwedeFAdjunction}. Concretely, if $\Delta$ is an effective almost Cartier $\bZ_{(p)}$-divisor such that $\Delta + K_X \sim_{\bZ_{(p)}} D$, then there is a non-degenerate map $\varphi \: F^e_* \sO_X(D) \to \sO_X(D)$ for some $e>0$ such that $\Delta_{\varphi}=\Delta$ and $\varphi$ is unique up to pre-multiplication by global units and by powers of $\varphi$ (where $\varphi^n \:F^{en}_* \sO_X(D) \to \sO_X(D)$ is defined inductively by $\varphi^{n+1} = \varphi \circ F^e_* \varphi^n$). In particular, $\Delta_{\varphi^n} = \Delta_{\varphi}$ for all $n$ by  \autoref{eqn.NaturalityMapsAndDivisors}. We denote the minimal such $e>0$ as $e_{\Delta}$. Thus, Schwede's correspondence holds in this generality (using \cite[Proposition 2.9]{HartshorneGeneralizedDivisorsOnGorensteinSchemes}, \cite[Remark 2.9]{HartshonreGeneralizedDivisorsAndBiliaison}, and $\bZ_{(p)} = \colim_e \bZ_{q-1}$) by the arguments in \cite{SchwedeFAdjunction}; see \cite[\S4]{BlickleSchwedeLinearMapsAlgebraAndGeometry}. \end{example}

\subsection{Cartier modules and the functors $f^*$, $f^!$} \label{Sec.CartierModules} Let us define Cartier algebras and modules as well as the functors $f^*$, $f^!$ introduced in \cite{BlickleStablerFunctorialTestModules}, which is the formalism we need to express our transformation rules naturally. For simplicity, we will work in the affine setting yet everything generalizes to the general scheme-theoretic setting by gluing on affine charts.
\begin{definition}[Cartier algebras] \label{def.CartierAlgebras}
Let $R$ be a ring. A \emph{Cartier $R$-algebra} $\sC$ is an $\bN$-graded ring $\sC = \bigoplus_e \sC_e$ with $\sC_0=R$, and equipped with a graded $R$-bimodule structure such that $r \cdot \kappa_e = \kappa_e \cdot r^q$ for all $r\in R$, $\kappa_e \in \sC_e$. Cartier $R$-algebras form a category in the obvious way.
\end{definition}

\begin{example} \label{fullCartieralgebras}
Let $M$ be an $R$-module. Its \emph{full Cartier algebra} is $\sC_M \coloneqq \bigoplus_e \Hom_R(F_*^e M, M)$ (setting  $\sC_{0,M} \coloneqq R$). Write $\sC_{e,M} = \Hom_R(F_*^e M, M)$. The (graded) left $R$-module structure is given by post-multiplication: $(r \cdot \varphi_e)(-) \coloneqq r \cdot \varphi_e(-)$ for all $r \in R, \varphi_e \in \sC_{e,M}$. The (graded) right $R$-module structure is given by pre-multiplication: $(\varphi_e \cdot r)(-) \coloneqq \varphi_e(F^e_*r \cdot -)$ for all $r \in R, \varphi_e \in \sC_{e,M}$ The (graded) ring structure is given by: if $\varphi_e \in \sC_{e,M}$ and $\varphi \in \sC_{d,M}$, $\varphi_e \cdot \varphi_d \in \sC_{e+d, M}$ is the composition $\varphi_e \circ F^e_* \varphi_d \: F^{e+d}_{*} M \to F^e_* M \to M$. 

Given map $\varphi \: F^{e}_* M \to M$, we write $\sC_M^{\varphi} \subset \sC_M$ for the Cartier subalgebra of $\sC_M$ generated by $\varphi$. That is, $\sC_{ne, M}^{\varphi} = \varphi^n \cdot R$ whereas $\sC_{d,M}^{\varphi}=0$ if $e \nmid d$.

Let $X = \Spec R$ be as in \autoref{ex.SchwedeCorrespondence} and $M = R(D)$ for some almost Cartier divisor $D$ on $X$. Then, $
\sC_{R(D)} \cong \bigoplus_e R\big((q-1)(D-K_X)\big)$ and if we treat elements of $R\big((q-1)(D-K_X)\big)$ as invertible elements $x_e \in \sK(R)$ such that $\Div x_e + (q-1)(D-K_X) \geq 0$, then $x_e \cdot x_d \coloneqq x_e^{p^d}  x_d$ by \autoref{eqn.NaturalityMapsAndDivisors}. Further, such an element $x_e$ acts on $x \in R(D)$ as $x_e \cdot x \coloneqq \kappa_{X,D-K_X}^e(F^e_*x_e x) \in R(D)$. 
\end{example}

\begin{example}
\label{exa.Cartieralgebradivisor}
Let $X = \Spec R$ be as in \autoref{ex.SchwedeCorrespondence} and $D$ be an almost Cartier divisor. Fixing an effective almost Cartier $\bQ$-divisor $\Delta$, we define $\sC_{\omega_R(D)}^{\Delta}$ to be the Cartier subalgebra of $\sC_{\omega_R(D)}$ consisting of degree-$e$ homogeneous elements $\varphi$ such that $\Delta_{\varphi}- \Delta \geq 0$, \ie $D_{\varphi} \geq \lceil (q-1) \Delta \rceil$. Thus (assuming $R$ further satisfies $\mathbf{G}_1$),
\[
\sC_{e,R(D)}^{\Delta} = \Hom_R\bigl(F^e_* R\bigl(D+\lceil (q-1)\Delta\rceil\bigr), R(D) \bigr) \subset \Hom_R(F^e_*R(D),R(D)),
\]
as in \cite[\S 4.3.1]{BlickleSchwedeTuckerFSigPairs1}. One also defines $\sC_{\omega_R(D)}^{\Delta, \mathfrak{a}^t}$ as customary.  When $\Delta = \Delta_{\varphi}$ for some $\varphi \: F^{e_{\Delta}}_* R(D) \to R(D)$ (which is unique up to pre-multiplication by units), we have $\sC_{R(D),ee_{\Delta}}^{\Delta}=\sC_{R(D),ee_{\Delta}}^{\varphi}$ for all $e \in \bN$. As in the last paragraph of \autoref{fullCartieralgebras}, \[
\sC_{R(D)}^{\Delta} \cong \bigoplus_{e \in \bN} R\big((q-1)(D-K_X) - \lceil (q-1) \Delta \rceil\big).\] 
\end{example}
\begin{definition}[Cartier modules] \label{def.CartierModules}
Work in the setup of \autoref{def.CartierAlgebras}. A \emph{Cartier $\sC$-module} is a left $\sC$-module that is finite as an $R$-module, \ie it is a module $M$ in $R\textnormal{-fmod}$ together with a homomorphism of Cartier $R$-algebras $\Xi \: \sC \to \sC_M$. The category of Cartier modules is defined in the obvious way.
\end{definition}

Next, we define the functors $f^*$ and $f^!$ associated to a cover $f \: \Spec S \to \Spec R$. Let $\sC$ be a Cartier $R$-algebra. One defines a Cartier $S$-algebra $f^* \sC$ as follows. As a right $S$-module, $f^* \sC$ is equal to $\sC \otimes_ R S = \bigoplus_e {\sC_e \otimes_R S}$, which is a graded ring in the obvious way. The graded left $S$-module structure is defined by the rule: $s \cdot (\kappa_e \otimes s') \coloneqq \kappa_e \otimes s^q s'$ for all $s,s' \in S, \kappa_e \in \sC_e$. This defines a functor $f^*$ from Cartier $R$-algebras to Cartier $S$-algebras.

For the ``upper-shriek'' functor, consider $f^!=\Hom_R(S, -) \: R\textnormal{-fmod} \to S\textnormal{-fmod}$. It is extended to a functor from Cartier $\sC$-modules to Cartier $f^* \sC$-modules as follows. Let $M$ be a Cartier $\sC$-module and $\mu \in f^!M =  \Hom_R(S,M)$, $\kappa_e \otimes s' \in f^* \sC _e$. Define $(\kappa_e \otimes s') \cdot \mu \in f^! M$ as: $\big((\kappa_e \otimes s') \cdot \mu\big)(s) \coloneqq \kappa_e \cdot \mu(s' s^q)$. Alternatively, given a homomorphism $\Xi \: \sC \to \sC_M$, define a natural homomorphism $f^!\Xi \: f^*\sC \to \sC_{f^!M}$ by setting in degree $e$:
\[
f^! \Xi (\kappa_e \otimes s') \coloneqq \big(\Xi(\kappa_e)\big)^{!} \cdot s' \textnormal{ for all } \kappa_e \in \sC_e, s' \in S,
\]
where, for a given $\varphi \in \Hom_R(F^e_*M,M)$, $\varphi^{!}$ is defined by Grothendieck duality as the only element of $\Hom_R(F_*^e f^! M, f^!M)$ making the following diagram commutative
\[
\xymatrix@C=3em{
f_\ast F_*^e f^! M  \ar[r]^-{ f_\ast \varphi^{!} } \ar[d]_-{ F_*^e \Tr_M} & f_\ast f^! M \ar[d]^-{\Tr_M}\\ 
 F_*^e M \ar[r]^-{\varphi} & M
}
\]
That is, $\varphi^{!}(F_*^e \mu)(s)= \varphi\big(F_*^e \mu(s^q) \big) \text{ for all } \mu \in f^!M, s\in S$.

\begin{remark} \label{rem.RemarkRelGor} The following observations are in order: \begin{enumerate}
    \item  If $N \subset f^!M$ is a Cartier $f^*\sC$-submodule, $\Tr_M(f_* N) \subset M$ is a Cartier $\sC$-submodule.
    \item  If $\Xi \: \sC \to  \sC_M$ is an inclusion, the image of $f^! \Xi \: f^*\sC \to \sC_{f^!M}$ is the right $S$-span of $\{\varphi^{!} \mid  \varphi \in \sC\}$ in $\sC_{f^!M}$. In fact, we have an isomorphism $f^* \sC_M^{\varphi} \xrightarrow{\sim} \sC_{f^! M}^{\varphi^{!}}$ for all non-degenerate $\varphi \: F^e_* M \to M$ (\ie $\varphi \cdot r \neq 0$ for all regular elements $r \in R$).
    \item If $\sC \subset \sC_R$ and $S \to \omega_{S/R}\coloneqq f^! R$; $1 \mapsto T$, is an isomorphism, Grothendieck duality yields natural isomorphisms on $N$: $f_* \Hom_S(N,S) \to \Hom_R(f_*N, R)$, $ \psi \mapsto T \circ \psi$. Plugging in $N=F_*^e S$ gives that, for every $\varphi \in \sC_R$, there is a unique $\varphi^{\top} \in \sC_S$ such that
    \[
\xymatrix@C=3em{
F_*^e S  \ar[r]^-{\varphi^{\top}} \ar[d]_-{ F_*^e T} & S \ar[d]^-{T}\\ 
 F_*^e R \ar[r]^-{\varphi} & R
}
\]
is commutative. Hence, \emph{the image of $f^* \sC_R \to \sC_S$ is the right $S$-span of $\{\varphi^{\top} \mid \varphi \in \sC\}$ in $\sC_S$ which we shall often denote by $f^* \sC_R$ by abuse of notation}.
\end{enumerate}

\end{remark}

\subsection{$F$-signatures, splitting primes, and test modules} \label{sec.F-Stuff}
For the reader's convenience, we recall the notions $F$-signature, splitting ratio, splitting prime, and test module.

\subsubsection{Test modules, $F$-purity, and $F$-regularity}

\begin{definition}
Let $(M,\sC)$ be a Cartier module over a ring $R$ and $\sC_+ \coloneqq \bigoplus_{e \geq 1} \sC_e$.
\begin{enumerate}[(a)]
\item A morphism $\Phi\colon N \to M$ of Cartier modules is a \emph{nil-isomorphism} if $\ker \Phi$ and $\coker \Phi$ are annihilated by some power of $\sC_+$.
\item The \emph{test module} $\uptau(M, \sC)$ is defined as the smallest Cartier submodule $N$ of $M$ for which $H^0_\eta(N_\eta) \to H^0_\eta(M_\eta)$ is a nil-isomorphism for all $\eta \in \Ass M$. $M$ is \emph{$F$-regular} if $\uptau(M, \sC) = M$. See \cite[\S 1]{BlickleStablerFunctorialTestModules} for more details.
\item If $R$ is a Cohen--Macaulay ring satisfying \eqref{eqn.!} with Cartier operator $\kappa_R \: F^e_* \omega_R \to \omega_R$, we say that $R$ is $F$-rational (resp. $F$-injective) if $(\omega_R,\kappa_R)$ is $F$-regular (resp. $F$-pure); where $(\omega_R,\kappa_R)$ is the Cartier module $\omega_R$ with respect to its full Cartier algebra (as $\kappa_R \cdot R = \Hom_R(F_\ast \omega_R, \omega_R)$). Equivalently, there are no non-zero proper submodules $M \subset \omega_R$ stable under $\kappa_R$; \cf. \cite[\S8.1, 8.2]{SchwedeTuckerTestIdealSurvey}, \cf \cite{SmithFRatImpliesRat,FedderFPureRat,FedderWatanabe, VelezOpennessOfTheFRationalLocus}.
\end{enumerate}
\end{definition}
We also note that $F$-rationality can be defined without a Cohen--Macaulay assumption but we do not use this here.


\subsubsection{Splitting primes and ratios}

Let $(R,\fram,\kay)$ be a local ring and $\sC$ be a Cartier $R$-algebra acting on $R$. Following \cite{BlickleSchwedeTuckerFSigPairs1}, we define the \emph{$e$-th splitting number} of $(R,\sC)$ to be
\[
a_e(R,\sC) \coloneqq \length_R \big(\sC_{e} \big/\sC_e^{\mathrm{ns}} \big),
\]
where lengths are computed as left $R$-modules, and $\fram \cdot \sC_e \subset \sC_e^{\mathrm{ns}} \subset \sC_e $ is the submodule of $\sC_e$ given by $\sC_e^{\mathrm{ns}} \coloneqq \{\kappa \in \sC_e \mid \kappa \cdot  R \subset \fram \}$. Since $\sC_e^{\mathrm{ns}}$ contains $\ker(\sC_e \to \sC_{e,R})$ and surjectivity/nonsurjectivity is preserved under this map, splitting numbers can be computed by replacing $\sC$ for its image along $\sC \to \sC_R$.  The $F$\emph{-signature} of $(R,\sC)$ is defined as
\[
s(R,\sC) \coloneqq \lim_{e \rightarrow \infty} a_{e\cdot n}/q^{\delta},
\]
where $n\coloneqq \gcd\{e \in \bN \mid a_e(R,\sC) \neq 0 \}$ and $\delta = \dim R + \log_p [\kay^{1/p}:\kay]$. This limit exists (\cite{TuckerFSigExists}) and its positivity characterizes the $F$-regularity of $(R,\sC)$; see \cite[\S 3]{BlickleSchwedeTuckerFSigPairs1}.

Following \cite{AberbachEnescuStructureOfFPure}, we define the \emph{splitting prime} to be the ideal
\[
\sp (R,\sC)\coloneqq\{r\in R \mid \kappa \cdot r \in \fram \text{ for all } e\in \mathbb{N}, \kappa \in \sC_e\}.
\]
Observe that $(R,\sC)$ is $F$-pure if and only if $\upbeta(R,\sC)$ is proper. In that case, $\upbeta(R,\sC)$ is a prime ideal and is zero if and only if $(R,\sC)$ is $F$-regular; see \cite[Theorem 1.1]{AberbachEnescuStructureOfFPure}, \cite[Proposition 2.12, Lemma 2.13]{BlickleSchwedeTuckerFSigPairs1}. Further, $\sp(R,\sC) \subset R$ is the largest proper Cartier $\sC$-submodule  \cite[Remark 4.4]{SchwedeCentersOfFPurity}, \cite[Proposition 2.12]{BlickleSchwedeTuckerFSigPairs1}. Thus, a map $\varphi \in \Image(\sC_e \to \sC_{e,R})$ induces a unique map $\overline{\varphi} \in \sC_{e,R/\sp(\sC)}$ making the following diagram commutative:
\[
\xymatrix{
F_*^e R  \ar[r]^-{ \varphi } \ar@{->>}[d]_-{} & R \ar@{->>}[d]^-{}\\ 
 F_*^e\big( R/\sp(\sC) \big)\ar[r]^-{\overline{\varphi}} & R/\sp(\sC)
}
\]
That is, $\sC$ induces a Cartier $R\big/\sp(\sC)$-algebra, say $\overline{\sC}$; see \cite[Definition 2.10]{BlickleSchwedeTuckerFSigPairs1}. It follows that $\bigl(R/\sp(R,\sC),\overline{\sC} \bigr)$ is $F$-regular; see \cite[Theorem 4.7]{AberbachEnescuStructureOfFPure}, \cite[Corollary 7.8]{SchwedeCentersOfFPurity}, \cite[Lemma 2.13]{BlickleSchwedeTuckerFSigPairs1}, and one defines the \emph{splitting ratio} of $(R,\sC)$ to be
\[
r(R,\sC)\coloneqq s\big(R/\sp(\sC),\overline{\sC}\big).
\]
See \cite[Theorem 4.2]{BlickleSchwedeTuckerFSigPairs1}, \cf \cite[Theorem 4.9]{TuckerFSigExists}, \cite{AberbachEnescuStructureOfFPure}. 

The following is well-known to experts but the key to recover previous results from ours.

\begin{lemma} \label{lem.RecoveringAllStuuf}
Let $\sC \subset \sD$ be Cartier algebras over a ring $R$ and $M$ be a Cartier $\sD$-module. Suppose that there is an $M$-regular element $c \in R$ such that $\sD_e \cdot c \subset \sC_e \subset \sD_e$ for all $e>0$. Then, $\uptau(M,\sC) \supset \uptau(M,\sD)$ is an equality. If $R$ is local and $M=R$, then $s(R,\sC)=s(R,\sD)$.
\end{lemma}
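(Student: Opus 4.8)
The plan is to establish the test-module equality first, and then to deduce the $F$-signature statement from it together with a comparison of splitting numbers. One inclusion is formal: since $\sC_+^n\subseteq\sD_+^n$ for all $n$, a nil-isomorphism of Cartier $\sD$-modules is also one of Cartier $\sC$-modules, so $\uptau(M,\sD)$ — being a Cartier $\sD$-submodule, hence a Cartier $\sC$-submodule, of $M$ — satisfies the property defining $\uptau(M,\sC)$, and minimality gives $\uptau(M,\sC)\subseteq\uptau(M,\sD)$; the content of the lemma is the reverse inclusion. Set $P\coloneqq\uptau(M,\sD)$; then $P$ is $F$-regular as a Cartier $\sD$-module (that is, $\uptau(P,\sD)=P$), $c$ is still $P$-regular, and $\sD_e c\subseteq\sC_e\subseteq\sD_e$. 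Since $\uptau(M,\sC)\subseteq P$, and for $\eta\in\Ass P\subseteq\Ass M$ the cokernel of $H^0_\eta(\uptau(M,\sC)_\eta)\to H^0_\eta(P_\eta)$ embeds (by left exactness of $H^0_\eta$) into the $\sC_+$-nilpotent cokernel of $H^0_\eta(\uptau(M,\sC)_\eta)\to H^0_\eta(M_\eta)$, the module $\uptau(M,\sC)$ witnesses $\uptau(P,\sC)\subseteq\uptau(M,\sC)$. Hence it suffices to prove that $(P,\sC)$ is $F$-regular, for then $P=\uptau(P,\sC)\subseteq\uptau(M,\sC)\subseteq P$.

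For this, the key observation is that $\sD_e c\subseteq\sC_e\subseteq\sD_e$ forces $\sC_c=\sD_c$ as Cartier $R_c$-algebras; hence $P_c$, which is $\sD_c$-regular (a localization of the $F$-regular $(P,\sD)$), is $\sC_c$-regular, so $c$ is a test element for $(P,\sC)$, while $c^2$ is a test element for $(P,\sD)$. Using the description of test modules via test elements \cite{BlickleStablerFunctorialTestModules},
\[
\uptau(P,\sC)=\sum_{e\geq0}\ \sum_{\sigma\in\sC_e}\sigma\!\left(F^e_*(cP)\right),
\qquad
P=\uptau(P,\sD)=\sum_{e\geq0}\ \sum_{\varphi\in\sD_e}\varphi\!\left(F^e_*(c^2P)\right).
\]
For $\varphi\in\sD_e$ and $p\in P$ one has $\varphi(F^e_*(c^2p))=(\varphi\cdot c)(F^e_*(cp))$ with $\varphi\cdot c\in\sD_e c\subseteq\sC_e$, so the sum presenting $P$ lies inside the sum presenting $\uptau(P,\sC)$; therefore $P\subseteq\uptau(P,\sC)\subseteq P$, i.e. $(P,\sC)$ is $F$-regular. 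This proves $\uptau(M,\sC)=\uptau(M,\sD)$.

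Now let $R$ be local and $M=R$; we may assume $c\in\fram$, since otherwise $\sC=\sD$. By the previous part and the characterization of $F$-regularity by positivity of the $F$-signature, $(R,\sC)$ is $F$-regular if and only if $(R,\sD)$ is; if neither is, both $F$-signatures vanish, so assume both are $F$-regular. From $\sC_e^{\mathrm{ns}}=\sC_e\cap\sD_e^{\mathrm{ns}}$ we get $a_e(R,\sC)\leq a_e(R,\sD)$. Expressing the splitting numbers through the colengths of the $e$-th splitting ideals $I_e^{\bullet}=\{r\in R\mid \phi(F^e_*r)\in\fram\text{ for all }\phi\in\bullet_e\}$, cf.\ \cite{BlickleSchwedeTuckerFSigPairs1}, the inclusion $\sD_e c\subseteq\sC_e$ gives $c\,I_e^{\sC}\subseteq I_e^{\sD}\subseteq I_e^{\sC}$, so $I_e^{\sC}\subseteq(I_e^{\sD}:c)$ and
\[
\length_R(I_e^{\sC}/I_e^{\sD})\ \leq\ \length_R\!\left((I_e^{\sD}:c)/I_e^{\sD}\right)\ =\ \length_R\!\left(R/(I_e^{\sD}+cR)\right)\ \leq\ \length_R\!\left(R/(\fram^{[q]}+cR)\right),
\]
the last step using $\fram^{[q]}\subseteq I_e^{\sD}$. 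Since $a_e(R,\sD)-a_e(R,\sC)$ differs from $\length_R(I_e^{\sC}/I_e^{\sD})$ only by the residue-field factor $[\kay^{1/q}:\kay]$ common to both, and $R/cR$ has dimension $\dim R-1$, Hilbert--Kunz estimates give $a_e(R,\sD)-a_e(R,\sC)=o(q^{\delta})$. Combined with $a_e(R,\sC)\leq a_e(R,\sD)$ and the existence of the defining limits \cite{TuckerFSigExists} (note $a_e(R,\sC)\neq0\Rightarrow a_e(R,\sD)\neq0$, so after restricting to a common arithmetic progression both sequences compute the respective $F$-signatures), this forces $s(R,\sC)=s(R,\sD)$.

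The main obstacle in the first part is that $\uptau(M,\sC)$ is not a priori a Cartier $\sD$-submodule of $M$, so it cannot be compared with $\uptau(M,\sD)$ by minimality alone; this is precisely what forces the detour through the explicit test-element formula and the identity $\sC_c=\sD_c$. In the second part the real work is producing a \emph{finite} bound on $a_e(R,\sD)-a_e(R,\sC)$ that is genuinely $o(q^{\delta})$: the naive candidate $I_e^{\sC}/cI_e^{\sC}$ has infinite length once $\dim R\geq2$, whereas the true discrepancy $I_e^{\sC}/I_e^{\sD}$ has finite length and is annihilated by both $c$ and $\fram^{[q]}$, which confines it to the $(\dim R-1)$-dimensional ring $R/cR$.
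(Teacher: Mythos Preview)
Your proof is correct, but takes a different route from the paper for the test-module equality. The paper argues more directly: it observes that $\sD_+ c\,\uptau(M,\sC)$ is a $\sD$-submodule contained in $\uptau(M,\sC)$ (using $\sD_e c\subset\sC_e$ and $\sC_+\uptau(M,\sC)=\uptau(M,\sC)$), then notes that $c\notin\eta$ for each $\eta\in\Ass M$ forces $\sC_\eta=\sD_\eta$, so $H^0_\eta\bigl(\sD_+ c\,\uptau(M,\sC)\bigr)_\eta=H^0_\eta\bigl(\uptau(M,\sD)\bigr)_\eta$; minimality of $\uptau(M,\sD)$ then gives $\uptau(M,\sD)\subset\sD_+ c\,\uptau(M,\sC)\subset\uptau(M,\sC)$. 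Your approach instead reduces to the $F$-regular module $P=\uptau(M,\sD)$ and uses the test-element description to show $(P,\sC)$ is $F$-regular. Both exploit the same core fact that $\sC$ and $\sD$ agree after inverting $c$, but the paper avoids the test-element machinery entirely. One small point in your presentation: the test-element formula you invoke is $\uptau(P,\sC)=\sC_+\bigl(c\,\underline{P}^{\sC}\bigr)$, so you implicitly need $\underline{P}^{\sC}=P$; this does follow from your computation (since $P=\sD_+(c^2P)\subset\sC_+(cP)\subset P$ forces $\sC_+P=P$), but it would be cleaner to note it explicitly before applying the formula. For the $F$-signature statement the paper simply cites \cite[Lemma~4.17]{BlickleSchwedeTuckerFSigPairs1} and \cite[\S2.2]{CarvajalSchwedeTuckerEtaleFundFsignature}; your written-out argument via the splitting ideals $I_e^\bullet$ and the bound $\length_R\bigl(R/(\fram^{[q]}+cR)\bigr)=O(q^{\dim R-1})$ is exactly that standard argument.
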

\begin{proof} By assumption, $c$ is not contained in any $\eta \in \Ass M$ and so $\sC_\eta = \sD_\eta$. We have an inclusion $\uptau(M, \sC) \subset \uptau(M, \sD)$ and an inclusion $\sD_+ c \uptau(M, \sC) \subset \uptau(M, \sC)$ where the former is a $\sD$-module. Hence, 
\[
H^0_\eta\big(\sD_+ c\uptau(M, \sC)\big)_\eta =
H^0_\eta\big(\uptau(M, \sC)_\eta\big) = H^0_\eta\big(\uptau(M_\eta, \sC_\eta)\big) = H^0_\eta \big(\uptau(M_\eta, \sD_\eta)\big) = H^0_\eta\big(\uptau(M, \sD)_\eta\big) 
\]
using \cite[Proposition 1.19 (b), Remark 2.4]{BlickleStablerFunctorialTestModules}. By definition, $\uptau(M, \sD) \subset M$ is the smallest $\sD$-submodule such that $H^0_\eta(\uptau(M, \sD)_\eta) \subset H^0_\eta(M_\eta)$ is a nil-isomorphism for all $\eta$. Then, $\uptau(M, \sD) \subset \sD_+ c \uptau(M, \sC)$ which yields the other inclusion. The claim about $F$-signatures is shown as in \cite[Lemma 4.17]{BlickleSchwedeTuckerFSigPairs1}, \cite[\S2.2]{CarvajalSchwedeTuckerEtaleFundFsignature}.
\end{proof}

\section{Transposability along a section of the relative canonical module} Let us consider the following observation. \label{sec.Transposability}

\begin{lemma} \label{lem.TransformationsSections}
Let $f \: Y \to X$ be a finite cover and $\Hom(f^*,f^!)$ be the set of natural transformations $\sigma \: f^* \to f^!$. Then, the mapping $\sigma \mapsto (\sigma_{\sO_X} \: \sO_Y \to \omega_{Y/X})$ defines a bijection $\Hom(f^*,f^!) \to \Gamma(Y,\omega_{Y/X})$. Further, splittings of $\sO_X \to f_* \sO_Y$ in $\omega_{Y/X}$ correspond to natural tranformations $\sigma \: f^* \to f^!$ such that the composition of natural transformations
\[
\id \xrightarrow{\eta} f_* \circ f^* \xrightarrow{f_* \circ \sigma} f_* \circ f^! \xrightarrow{\Tr} \id
\]
is the identity, 
where $\eta$ is the unit functor of the adjointness $(f^*,f_*)$.
\end{lemma}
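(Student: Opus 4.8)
The plan is to reduce to the affine case, construct the bijection $\Hom(f^*,f^!)\to\Gamma(Y,\omega_{Y/X})$ by hand, and then read off the splitting criterion by evaluating an endomorphism of the identity functor at the structure sheaf. First I would pass to $X=\Spec R$, $Y=\Spec S$, where $f^*=S\otimes_R(-)$ and $f^!=\Hom_R(S,-)$ are functors $R\textnormal{-fmod}\to S\textnormal{-fmod}$ (identifying $S\otimes_R R=S$ and $\Hom_R(S,R)=\omega_{S/R}$); since a natural transformation between such functors is determined by, and can be glued from, its restrictions to an affine cover, no generality is lost. Given $\sigma\in\Hom(f^*,f^!)$, set $T\coloneqq\sigma_{\sO_X}(1)\in\Hom_R(S,R)=\Gamma(Y,\omega_{Y/X})$. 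Naturality of $\sigma$ against the maps $\tilde m\colon R\to M$, $1\mapsto m$, forces $\sigma_M(s\otimes m)(t)=T(ts)\,m$ for all $s,t\in S$ and $m\in M$; since every element of $S\otimes_R M$ is a finite sum of elementary tensors and $\sigma_M$ is additive, this recovers $\sigma$ from $T$, so $\sigma\mapsto\sigma_{\sO_X}$ is injective. For surjectivity I would check that, for arbitrary $T\in\Hom_R(S,R)$, the formula $\sigma_M(s\otimes m)(t)\coloneqq T(ts)\,m$ is $R$-balanced (hence well defined on $S\otimes_R M$), $S$-linear in $s\otimes m$, and natural in $M$, so that it defines a $\sigma$ with $\sigma_{\sO_X}(1)=T$; these are routine verifications.

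Conceptually this bijection is nothing but the chain $\Hom(f^*,f^!)\cong\Hom\big(\id,f_*f^!\big)=\Hom\big(\id,\Hom_R(S,-)\big)\cong\Hom_R(S,R)$, where the first isomorphism comes from the adjunction $(f^*,f_*)$ (a natural transformation out of a left adjoint is the same as a natural transformation from the identity into the composite with the right adjoint) and the last from the fact that a natural transformation $\id\Rightarrow\Hom_R(S,-)$ is determined by its value at $R$, which may be any element of $\Hom_R(S,R)$. Unwinding it reproduces $\sigma\mapsto\sigma_{\sO_X}$ via the identifications $f_*f^!\sO_X=f_*\omega_{Y/X}$ and $\Gamma(X,f_*\omega_{Y/X})=\Gamma(Y,\omega_{Y/X})$. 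Either description establishes the first assertion.

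For the second assertion I would use that a natural transformation from $\id_{R\textnormal{-fmod}}$ to itself is multiplication by the unique scalar $c\in R=\Hom_R(R,R)$ equal to its component at $M=R$, and is the identity precisely when $c=1$. So I evaluate the displayed composite $\id\xrightarrow{\eta}f_*f^*\xrightarrow{f_*\sigma}f_*f^!\xrightarrow{\Tr}\id$ at $M=R$: the unit $\eta_R\colon R\to f_*f^*R=S$ is the structure map $1\mapsto1$; $(f_*\sigma)_R=\sigma_R\colon S\to\Hom_R(S,R)$ sends $1\mapsto T$ with $T=\sigma_{\sO_X}(1)$ as above; and $\Tr_R\colon\Hom_R(S,R)\to R$ is evaluation at $1$. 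Hence the composite sends $1\mapsto T(1)$, i.e. $c=T(1)$, so the displayed composition is the identity if and only if $T(1)=1$. Since $T$ is $R$-linear, $T(1)=1$ is equivalent to $T$ being a left inverse of the structural map $R\to S$, i.e. to $\sigma_{\sO_X}\colon\sO_Y\to\omega_{Y/X}$ being a splitting of $\sO_X\to f_*\sO_Y$; this is the claimed correspondence.

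I do not expect a genuine obstacle here: the statement is, as indicated, a simple observation. The only points needing a little care are fixing the source category so that $\sO_X$ is a generator (which legitimizes the naturality-against-$\tilde m$ argument) and the bookkeeping that the candidate inverse $T\mapsto\sigma$ is well defined and natural; the passage from the affine case to a general cover $f\colon Y\to X$ is harmless gluing.
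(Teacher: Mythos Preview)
Your proposal is correct and follows essentially the same approach as the paper: both reduce to the affine case and use naturality against the maps $R\to M$, $1\mapsto m$, to recover $\sigma_M$ from $T=\sigma_R(1)$, and both construct the inverse by the same formula $\sigma_M(s\otimes m)(t)=T(ts)m$. Your treatment is in fact more detailed than the paper's in two respects: you include the conceptual description via the adjunction $\Hom(f^*,f^!)\cong\Hom(\id,f_*f^!)$ and the representability of $\Hom_R(S,-)$, and you spell out the splitting argument (evaluating the endomorphism of $\id$ at $R$ to get $c=T(1)$), which the paper simply declares ``follows directly from the definition of the bijections.''
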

\begin{proof}
Work in the affine case and set $X = \Spec R$ and $Y=\Spec S$. The mapping $\omega_{S/R} \to \Hom(f^*,f^!)$ sending $S \to \omega_{S/R}$ to
\[
  M \mapsto (\omega_{S/R} \otimes_R M \xrightarrow{\mathrm{can}} f^!M) \circ \big( (S \to \omega_{S/R}) \otimes_R M \big)
\] 
is the required inverse. More succinctly, if $T \in \omega_{S/R}$, then one defines a natural transformation $\sigma : f^* \to f^!$ by declaring $\sigma_M \: f^*M \to f^!M$ to be the $S$-linear map adjoint to the $R$-linear map $M \to f_* f^!M$ given by $m \mapsto (s \mapsto T(s)m)$ for all $s\in S, m \in M$, and all $R$-modules $M$. In other words, $\sigma_M$ sends $s \otimes m$ to the $R$-linear map $(T(s\cdot -)m\: S \to M$ given by $s'\mapsto T(ss')m$.

We readily see that $\omega_{S/R} \to \Hom(f^*,f^!) \to \omega_{S/R}$ is the identity. To demonstrate that $\Hom(f^*,f^!) \to \omega_{S/R} \to \Hom(f^*,f^!)$ is the identity, we use that a natural transformation $\sigma \: f^* \to f^!$ is compatible with the maps in $\Hom_R(R,M)$ to write down commutative squares
\[
\xymatrix@C=3em{
S  \ar[r]^-{ \sigma_R  } \ar[d]_-{ s \mapsto s \otimes m} & \omega_{S/R} \ar[d]^-{\sigma \mapsto \big(s \mapsto \sigma(s)m \big) }\\ 
 f^*M \ar[r]^-{\sigma_M} & f^!M
}
\]
for all $m\in M$. Equivalently, $\sigma_M = (\omega_{S/R}\otimes_R M \to f^!M) \circ (\sigma_R
\otimes_R M)$. The last statement follows directly from the definition of the bijections.
\end{proof}

Following \cite{SchwedeTuckerTestIdealFiniteMaps}, given a finite cover $f\: Y \to X$, we choose a natural transformation $\sigma \: f^* \to f^!$, and set $T\coloneqq \Tr_{X} \circ \sigma_{X}$ to be the corresponding global section of $\omega_{Y/X}$. Before considering the general case, we illustrate with a key example what our goal is. Set $X = \Spec R$ and $Y=\Spec S$. Suppose that $\sigma_R \: S \to \omega_{S/R}$ is injective (\ie $T$ is non-degenerate). We say that $\varphi \in \sC_{e,R}$ is $T$-transposable if $\varphi^{!} \circ F^e_* \sigma_R \: F^e_* S \to \omega_{S/R}$ belongs to the image of $\Hom_S(F^e_*S, S)$ under the embedding $\Hom_S(F^e_* S, \sigma_R) \: \Hom_S(F^e_*S, S) \to \Hom_S(F^e_*S, \omega_{S/R})$. That is, $\varphi$ is \emph{$T$-transposable} if there is a \emph{necessarily unique} map $\varphi^{\top} \in \sC_{e,S}$ such that
\[
\xymatrix@C=3em{
F^e_*S  \ar[r]^-{ \varphi^{\top}  } \ar[d]_-{ F^e_*\sigma_R } & S \ar[d]^-{\sigma_R }\\ 
 F_\ast^e\omega_{S/R} \ar[r]^-{\varphi^{!} } & \omega_{S/R}.
}
\]
is commutative. We refer to $\varphi^{\top}$ as the \emph{$T$-transpose} of $\varphi$ (suppressing $T$ from the notation hoping the context will make it clear). Note that $\varphi^{\top}$ is characterized by the equality
\begin{equation} \label{eqn.TransposabilityEquation}
\varphi\big(F^e_* T(ss'^q)\big) = T\big(\varphi^\top(F^e_*s)s'\big)
\end{equation}
for all $s,s' \in S$. In particular, $\varphi \circ F^e_*T = T \circ \varphi^{\top}$; obtained by setting $s'=1$ in \autoref{eqn.TransposabilityEquation}. However, $T\big(\varphi^\top(F^e_*s)s'\big) = 
T\big(\varphi^\top(s'F^e_*s)\big) =T\big(\varphi^\top(F^e_*ss'^q)\big) $ and so \autoref{eqn.TransposabilityEquation} holds if so does $\varphi \circ F^e_*T = T \circ \varphi^{\top}$. In other words, $\varphi^{\top}$ is characterized by the equality
\begin{equation} \label{eqn.TransposabilitySimpleEquation}
\varphi \circ F^e_*T = T \circ \varphi^{\top}.
\end{equation}
We thank Anne Fayolle for pointing this out to us.
\begin{proposition} \label{prop.SigmaRisoOnTestModules}
With notation as above, suppose that $\sigma_R$ is an isomorphism in codimension $0$ and that $S$ satisfies $\mathbf{S}_1$. Then, the injective morphism of Cartier modules $\sigma_R \: (S,\varphi^{\top}) \to (\omega_{S/R}, \varphi^{!}) $ induces an isomorphism on test modules.
\end{proposition}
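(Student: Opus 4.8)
The plan is to exploit the functoriality of the test module construction together with the fact that $\sigma_R$ is a codimension-$0$ isomorphism, so that it becomes an isomorphism of Cartier modules after localizing at every associated prime. First I would recall from \autoref{lem.RecoveringAllStuuf} and \cite[Proposition 1.19 (b), Remark 2.4]{BlickleStablerFunctorialTestModules} that $\uptau(-,-)$ commutes with localization at points of $\Ass$, so that the defining property of $\uptau$ is checked generically; since $\sigma_R$ is an isomorphism at every $\eta \in \Ass S$ (note $\Ass S = \Ass \omega_{S/R}$ as $S$ satisfies $\mathbf{S}_1$ and $\omega_{S/R}$ is $\omega$-reflexive, hence both agree with the minimal primes in codimension $0$), the map $\sigma_R$ is a nil-isomorphism of Cartier modules — in fact a generic isomorphism — after passing to $H^0_\eta$.

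Next I would push $\uptau$ through $\sigma_R$ in both directions. Since $\sigma_R \: (S,\varphi^{\top}) \to (\omega_{S/R},\varphi^{!})$ is a morphism of Cartier modules, it sends Cartier submodules to Cartier submodules and pulls them back, and in particular $\sigma_R\big(\uptau(S,\varphi^{\top})\big)$ is a Cartier $\varphi^{!}$-submodule of $\omega_{S/R}$, while $\sigma_R^{-1}\big(\uptau(\omega_{S/R},\varphi^{!})\big)$ is a Cartier $\varphi^{\top}$-submodule of $S$ (using injectivity of $\sigma_R$, which holds because $S$ satisfies $\mathbf{S}_1$ and $\sigma_R$ is injective at every generic point, hence injective). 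The key point is that both of these submodules still satisfy the generic nil-isomorphism property: localizing at $\eta \in \Ass S$ and applying $H^0_\eta$, the isomorphism $(\sigma_R)_\eta$ identifies $\uptau(S,\varphi^{\top})_\eta$ with $\uptau(\omega_{S/R},\varphi^{!})_\eta$ by the compatibility of $\uptau$ with localization and with isomorphisms of Cartier modules. By minimality of the test module one then gets $\uptau(\omega_{S/R},\varphi^{!}) \subset \sigma_R\big(\uptau(S,\varphi^{\top})\big)$ and $\uptau(S,\varphi^{\top}) \subset \sigma_R^{-1}\big(\uptau(\omega_{S/R},\varphi^{!})\big)$, i.e. $\sigma_R\big(\uptau(S,\varphi^{\top})\big) \subset \uptau(\omega_{S/R},\varphi^{!})$; combining the two inclusions gives $\sigma_R\big(\uptau(S,\varphi^{\top})\big) = \uptau(\omega_{S/R},\varphi^{!})$, which is the assertion once we observe $\sigma_R$ restricts to an isomorphism onto its image.

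I expect the main obstacle to be the bookkeeping around the minimality argument: one must be careful that the candidate submodule obtained by transporting $\uptau$ across $\sigma_R$ is genuinely a Cartier submodule for the correct Cartier algebra (here the full Cartier algebras generated by $\varphi^{\top}$ and $\varphi^{!}$ respectively, under the isomorphism $f^*\sC_S^{\varphi^{\top}} \cong \sC_{f^!S}^{\varphi^{!}}$ from \autoref{rem.RemarkRelGor}(2)) and that the generic nil-isomorphism condition is inherited correctly. A secondary subtlety is that $\sigma_R$ need not be globally injective a priori; one uses $\mathbf{S}_1$ on $S$ together with generic injectivity to upgrade "isomorphism in codimension $0$" to an honest injection, so that $\sigma_R$ is a monomorphism of Cartier modules and the image/preimage manipulations above are legitimate. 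Once these points are in place, the equality of test modules is formal.
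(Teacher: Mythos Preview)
Your proposal is correct and follows essentially the same route as the paper's proof (which defers to the proof of \autoref{cor.varSigmatestModules}): use that $\sigma_R$ is an injective generic isomorphism of Cartier modules between modules with no embedded primes, transport $\uptau$ across $\sigma_R$ in each direction to obtain Cartier submodules satisfying the generic nil-isomorphism condition, and conclude both inclusions by minimality. The paper's write-up is slightly slicker in the second inclusion---it first observes $\uptau(\omega_{S/R},\varphi^{!}) \subset \sigma_R(S)$ and then applies minimality of $\uptau(S,\varphi^{\top})$ directly---but this is exactly your $\sigma_R^{-1}$ step rephrased.
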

\begin{proof}
See \autoref{cor.varSigmatestModules} below.
\end{proof}

If $S$ satisfies $\mathbf{S}_1$ then so does $R$. The converse holds if $\sigma_R$ is injective. Moreover, if $\sigma_R$ is an isomorphism in codimension $0$ and $S$ satisfies $\mathbf{S}_1$, then $\sigma_R$ is injective. Thus, in view of \autoref{prop.SigmaRisoOnTestModules}, the natural setup for transposability is that both $R$ and $S$ satisfy $\mathbf{S}_1$ and that $\sigma_R \: S \to \omega_{S/R}$ is an injective generic isomorphism. If $R$ and $S$ also satisfy $\mathbf{G}_0$, then $\sigma_R$ being injective implies that it is an isomorphism in codimension $0$.

We aim to generalize \autoref{prop.SigmaRisoOnTestModules} and the notions behind it to all Cartier modules. The first hurdle we face is controlling $\ker \sigma_M$ for arbitrary $M$. To bypass this issue, we restrict ourselves to the full subcategory of $\omega$-reflexive modules, which is well-behaved assuming the underlying spaces satisfy $\mathbf{S}_1$; see \cite{HartshonreGeneralizedDivisorsAndBiliaison,HartshorneGeneralizedDivisorsOnGorensteinSchemes}. We introduce the following setup.

\begin{setup} \label{set.Transposability}
Let $f \: Y=\Spec S \to X=\Spec R$ be a finite cover of $\mathbf{S}_1$ schemes admitting canonical modules. Choose $T \in \omega_{S/R}$ and consider the corresponding natural transformation $\sigma \: f^* \to f^!$. We assume that $\sigma_R$ is a generic isomorphism. Let $K \coloneqq \prod_{\height \p = 0} R_{\p}$ be the total ring of fractions of $R$; see \cite[Proposition 2.1]{HartshorneGeneralizedDivisorsOnGorensteinSchemes}. Since $\phi\:R \to S$ is integral, $L\coloneqq K \otimes_R S$ is the total ring of fractions of $S$. In particular, proving something generically (or in codimension $0$) means pulling back along $\Spec K \to X$ and replacing $f$ by $f_K \: \Spec L \to \Spec K$. When $f$ is generically flat (\ie $f_K$ is flat), we write $[L:K]$ for the free rank of $L/K$.
\end{setup}

\begin{definition-proposition} \label{def.DaggerFunctor}
Work in \autoref{set.Transposability}. Then, $\sigma \: f^* \to f^!$ factors through $\omega$-reflexifications. That is, $\sigma_M : f^*M \to f^!M$ factors naturally as
\[
\sigma_M \: f^*M \xrightarrow{\alpha_{f^*M}} (f^*M)^{\omega \omega} \xrightarrow{\varsigma_M} f^!M
\]
for all $M$ in $R\textnormal{-mod}^{\omega}$. Thus, $M \mapsto (f^*M)^{\omega \omega}$ defines a functor $f^{\dagger} \:R\textnormal{-mod}^{\omega} \to S\textnormal{-mod}^{\omega}$ together with a natural transformation $\varsigma \: f^{\dagger} \to f^!$ factoring $\sigma \: f^* \to f^!$ through $\omega$-reflexifications.
\end{definition-proposition}

The advantage of working with $\varsigma$ instead of $\sigma$ is the following. 

\begin{proposition} \label{pro.InjectivitySigma} 
Work in \autoref{set.Transposability}. Then, $\sigma_M^{\omega \omega}$ is an (injective) generic isomorphism for all generically flat finite $R$-modules $M$ satisfying $\mathbf{S}_1$. In particular, $\varsigma_M \: f^{\dagger}M \to f^! M$ is an (injective) generic isomorphism for all generically flat modules $M$ in $R\textnormal{-mod}^{\omega}$.
\end{proposition}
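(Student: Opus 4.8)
The plan is to bootstrap from the case $M = R$, which is exactly the hypothesis built into \autoref{set.Transposability}, by means of the explicit factorization of $\sigma_M$ recorded in the proof of \autoref{lem.TransformationsSections}. Working affinely with $X = \Spec R$ and $Y = \Spec S$, that factorization reads
\[
\sigma_M \: f^*M \xrightarrow{\;\sigma_R \otimes_R M\;} \omega_{S/R} \otimes_R M \xrightarrow{\;\mathrm{can}\;} f^! M,
\]
where $\mathrm{can}\: \Hom_R(S,R)\otimes_R M \to \Hom_R(S,M)$ is the canonical evaluation map. A morphism of finite $R$-modules is a generic isomorphism precisely when it localizes to an isomorphism at every minimal prime $\p$ of $R$, and both $\otimes_R M$ and $\Hom_R(S,-)$ commute with localization on finite modules; so it suffices to check that each of the two displayed factors does this. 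For $\sigma_R \otimes_R M$ this is immediate: $(\sigma_R)_\p$ is an isomorphism by hypothesis, and tensoring with $M_\p$ preserves it. For $\mathrm{can}$ this is precisely where generic flatness enters: $M_\p$ is finite free over $R_\p$, and for a finite free module the canonical map $\Hom_{R_\p}(S_\p, R_\p)\otimes_{R_\p} M_\p \to \Hom_{R_\p}(S_\p, M_\p)$ is an isomorphism. Hence $\sigma_M$ is a generic isomorphism for every generically flat finite $R$-module $M$ satisfying $\mathbf{S}_1$.

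Next I would pass to $\omega$-reflexive hulls. Naturality of the $\mathbf{S}_2$-ification transformation $\alpha$ yields a commutative square whose horizontal arrows are $\sigma_M$ and $\sigma_M^{\omega\omega}$ and whose vertical arrows are $\alpha_{f^*M}$ and $\alpha_{f^! M}$. Since $\alpha_F$ is always an isomorphism in codimension $0$, the three arrows $\sigma_M$, $\alpha_{f^*M}$, $\alpha_{f^! M}$ are generic isomorphisms, and localizing at a minimal prime then forces $\sigma_M^{\omega\omega}$ to be a generic isomorphism as well. Finally, $(f^*M)^{\omega\omega}$ is an $\omega$-dual, hence $\omega$-reflexive, hence satisfies $\mathbf{S}_2$ and in particular $\mathbf{S}_1$; a generic isomorphism out of an $\mathbf{S}_1$ module is automatically injective, since its kernel is supported in codimension $\geq 1$ while $\mathbf{S}_1$ forbids the source from having associated points of positive codimension. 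This proves the first assertion.

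For the ``in particular'' clause, take $M$ in $R\textnormal{-mod}^{\omega}$ that is generically flat; being $\omega$-reflexive, $M$ satisfies $\mathbf{S}_2$, hence $\mathbf{S}_1$, so the first part applies. By \autoref{def.DaggerFunctor} we have $\sigma_M = \varsigma_M \circ \alpha_{f^*M}$ with $f^{\dagger}M = (f^*M)^{\omega\omega}$; localizing at a minimal prime, both $\sigma_M$ and $\alpha_{f^*M}$ become isomorphisms, whence $\varsigma_M$ is a generic isomorphism, and it is injective because its source $f^{\dagger}M$ satisfies $\mathbf{S}_1$. (Equivalently, since $f^!$ preserves $\omega$-reflexivity, $\alpha_{f^! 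M}$ is an isomorphism and $\varsigma_M$ may be identified with $\sigma_M^{\omega\omega}$.)

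The compatibility of $\Hom$ and $\otimes$ with localization at minimal primes and the free-module form of $\mathrm{can}$ are entirely routine; the only point needing a little care---though not a genuine obstacle---is the reflexive-hull bookkeeping, namely making sure that in each instance the source of the relevant map genuinely satisfies $\mathbf{S}_1$, which is what upgrades ``generic isomorphism'' to ``injective generic isomorphism''.
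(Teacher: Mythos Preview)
Your argument is correct and considerably more direct than the paper's. You factor $\sigma_M$ as $(\sigma_R \otimes_R M)$ followed by the canonical map $\omega_{S/R}\otimes_R M \to f^!M$, check each factor is a generic isomorphism (the first by hypothesis on $\sigma_R$, the second by generic freeness of $M$), and then pass to $\omega$-reflexive hulls via the naturality square for $\alpha$; injectivity falls out from $\mathbf{S}_1$ on the source. This is clean and self-contained.

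The paper instead builds a web of auxiliary natural transformations $\beta_M\: f^!M \to (f^*(M^\omega))^\omega$, $\varepsilon_M$, $\delta_M$, $\gamma_M\: f^*(M^\omega)\to (f^*M)^\omega$, and $\chi_M=\gamma_M^\omega$, and proves the identity $\varsigma_M = \beta_M^{-1}\circ \chi_M$ (for $M$ satisfying $\mathbf{S}_2$) before reducing to the generic behavior of $\gamma_M$. The payoff of this longer route is not the proposition itself but the explicit presentation of $\varsigma_M$ via $\gamma_M$: the paper later invokes exactly this construction (see \autoref{rem.Interpreation}) to identify $\varsigma_{\omega_R(D)}$ with the embedding $\omega_S(f^*D - K_{S/R}) \hookrightarrow \omega_S(f^*D)$ given by $\Ram_T$, which feeds into the divisorial transposability criterion \autoref{thm.TransposabilityCriterion}. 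Your argument proves the stated proposition, but does not by itself yield that explicit divisorial description; if you go on to use \autoref{rem.Interpreation}, you would need to supply that computation separately.
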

\begin{proof}
For $M$ in $R\textnormal{-fmod}$, we have natural isomorphisms of $R$-modules:
\begin{align*}
f_*f^!(M^{\omega \omega}) = \Hom_R(S,M^{\omega \omega}) \cong \Hom_R(S \otimes_R M^{\omega}, \omega_R) &= \Hom_R(f_*f^*(M^{\omega}), \omega_R) \\
&\cong f_*\Hom_S\big(f^*(M^{\omega}),\omega_S\big)\\
&= f_*\big(f^*(M^{\omega})\big)^{\omega} ,
\end{align*}
given by $\otimes$-Hom adjointness and duality, and giving natural isomorphisms $f^!(M^{\omega \omega}) \cong \big(f^*(M^{\omega})\big)^{\omega}$ of $S$-modules. Thus, applying $f^!$ to $\alpha_M \: M \to M^{\omega \omega}$ yields a transformation $\beta_M \: f^! M \to \big(f^*(M^{\omega})\big)^{\omega}$, which is an isomorphism (resp.\ an isomorphism in codimension $1$) if $M$ is $\omega$-reflexive (resp. satisfies $\mathbf{S}_1$). Note that $\alpha_M$ and $\beta_M$ are injective if $M$ satisfies $\mathbf{S}_1$. 

Consider the diagram:
\begin{equation} \label{eqn.CommDiagramRealizngReflixificationNaturally}
\xymatrix@C=3em{
f^! M \ar[r]^-{\beta_M} \ar[d]_-{\alpha_{f^!M}}  & \big(f^* (M^{\omega}) \big)^{\omega} \ar[d]^-{\alpha} \\
(f^!M)^{\omega \omega} \ar[r]^-{\beta_M^{\omega \omega}} \ar[ru]^-{\delta_M^\omega}  & \Big( \big(f^* (M^{\omega}) \big)^{\omega} \Big)^{\omega \omega}
}
\end{equation}
where $\delta_M^\omega$ is the natural transformation given by
\[
\delta_M^\omega \coloneqq \big(f^*(M^{\omega}) \xrightarrow{\delta_M} (f^!M)^{\omega} \big)^{\omega}
\]
and $\delta_M$ is the $(f^*,f_*)$-adjoint of the $R$-linear map $M^{\omega} \to f_*(f^!M)^{\omega}$ given by $\mu \mapsto f^!\mu$.\footnote{Since $f_*(f^!M)^{\omega} \cong (f_*f^!M)^{\omega}$ by Grothendieck duality, $\delta_M$ is the $(f^*,f_*)$-adjoint of $\Tr_M^{\omega} \: M^{\omega} \to (f_*f^!M)^{\omega}$.} Of course, the outer rectangle in \autoref{eqn.CommDiagramRealizngReflixificationNaturally} is commutative for all $M$. A straightforward computation verifies that the upper triangle in \autoref{eqn.CommDiagramRealizngReflixificationNaturally} is commutative as well (for all $M$). The lower triangle, however, is not necessarily commutative unless $\alpha_{f^!M}$ is surjective. Thus, if $M$ and so $f^!M$ satisfy $\mathbf{S}_1$,\footnote{Use \cite[\href{https://stacks.math.columbia.edu/tag/0AV6}{Lemma 0AV6}]{stacks-project} and the fact that depth is invariant under pushforwards along finite morphisms.} then $\alpha_{f^!M}$ is an isomorphism in codimension $1$ thereby the lower triangle commutes in codimension $1$. Therefore, since all modules in the lower triangle are reflexive, this triangle is commutative if $M$ satisfies $\mathbf{S}_1$; see \cite[Remark 1.8]{HartshonreGeneralizedDivisorsAndBiliaison}. Moreover, in that case, it would be a commutative diagram of natural isomorphisms.

In conclusion, $\delta_M^\omega$ \emph{naturally} realizes $\beta_M$ as the $\mathbf{S}_2$-ification of $f^!M$ in the (full) subcategory of modules $M$ that satisfy $\mathbf{S}_1$. This will be crucial below.

On the other hand, specializing to $M=R$ gives us an injective map:
\begin{equation} \label{eqn.SigmaOmegaR}
S \xrightarrow{ \sigma_R} \omega_{S/R} \xrightarrow{\beta_R} (f^* \omega_R)^{\omega} = \Hom_{S}(f^*\omega_R, \omega_S).  
\end{equation}
Note that $(\beta_R \circ \sigma_R)(1)=\beta_R(T)=\sigma_{\omega_R}$.
Then, we may consider the following composition:
\begin{equation} \label{eqn.DefinitionOfGamma}
   f^*\Hom_R(M,\omega_R) \xrightarrow{c} \Hom_S(f^*M,f^*\omega_R) \xrightarrow{\Hom_S(f^*M, \sigma_{\omega_R})} \Hom_S(f^*M, \omega_S) 
\end{equation}
where $c$ is the $(f^*,f_*)$-adjoint of the $R$-linear map $
\Hom_R(M,\omega_R) \to f_*\Hom_S(f^*M,f^*\omega_R)$ given by $\mu \mapsto f^*\mu$ for all $\mu \in \Hom_R(M,\omega_R)$. This
gives us a natural transformation $\gamma_M \: f^*(M^{\omega}) \to (f^*M)^{\omega}$. Dualizing $\gamma_M$ gives $\gamma_M^{\omega} \: f^{\dagger} M \to \big(f^* (M^{\omega}) \big)^{\omega}$. Further, consider the following diagram:
\[
\xymatrix@C=3em{
f^{\dagger} M \ar[r]^-{\gamma_M^{\omega}}  & \big(f^* (M^{\omega}) \big)^{\omega} \\
f^* M \ar[r]^-{\sigma_M} \ar[u]^-{\alpha_{f^*M}} & f^!M \ar[u]_-{\beta_M}
}
\]
We claim that it is commutative. This is the case for $M = R$ by construction. Noting that all natural transformations are compatible with $\Hom_R(R, M)$, we obtain the commutativity for general $M$. Next, we prove that this diagram (naturally) realizes $\gamma_M^{\omega}$ as $\sigma_M^{\omega \omega}$ via $\delta_M^\omega$. 
\begin{claim}
$\gamma_M^\omega = \delta_M^\omega \circ \sigma_M^{\omega \omega}$ for all $M$. In fact, $\gamma_M = \sigma_M^{\omega} \circ \delta_M$ for all $M$.
\end{claim}
\begin{proof}[Proof of claim]
We verify the second equality which implies the first one. Both sides are natural transformations $f^*(M^{\omega}) \to (f^*M)^{\omega}$. Thus, it suffices to check that both sides agree at an arbitrary element $1\otimes \mu \in f^*(M^{\omega})$ with $\mu \in M^{\omega}$. On one side,  $\gamma_M \: 1 \otimes \mu \mapsto \sigma_{\omega_R} \circ f^*\mu$. On the other, $\sigma_M^{\omega} \circ \delta_M \: 1 \otimes \mu \mapsto f^!\mu \circ \sigma_M$. Thus, we just need to show $\sigma_{\omega_R} \circ f^* \mu = f^!\mu \circ \sigma_M$ for all $\mu$. However, this is clear as both send $1 \otimes m$ to $s \mapsto T(s)\mu(m)$.
\end{proof}

\emph{In particular, $\sigma_M^{\omega \omega} = \delta_M^{\omega -1} \circ \gamma_M^\omega$ if $M$ satisfies $\mathbf{S}_1$, and $\varsigma_M = \beta_M^{-1} \circ \gamma_M^\omega$ if $M$ satisfies $\mathbf{S}_2$.} Consequently, in either case, we just need to prove that $\gamma_M^{\omega}$ is an injective generic isomorphism \emph{for all $M$}. Since the domain of $\gamma_M^{\omega}$ satisfies $\mathbf{S_1}$, it suffices to prove that $\gamma_M^{\omega}$ is a generic isomorphism as then its injectivity is automatic. Then, it is enough to prove that $\gamma_M$ is generically an isomorphism. To this end, recall that $\gamma_M$ is the composition \autoref{eqn.DefinitionOfGamma}. Note that the map $c$ in \autoref{eqn.DefinitionOfGamma} is generically an isomorphism if $M$ is generically flat (or if $f$ is generically flat). Hence, it suffices to prove that $\sigma_{\omega_R}$ is generically an isomorphism. To this end, recall that $\sigma_{\omega_R}$ is the image of $1$ under the composition \autoref{eqn.SigmaOmegaR},
which is generically an isomorphism (by hypothesis). Generically, we then have an isomorphism $L \to \Hom_L(f_K^*\omega_K, \omega_L)$ of $L$-modules. Taking $\omega$-duals, we obtain an isomorphism $\rho \: f^*_K \omega_K \to \omega_L$ using that in dimension $0$ all modules are $\omega$-reflexive \cite[Lemma 1.1]{HartshonreGeneralizedDivisorsAndBiliaison}. Therefore, there is (a unique) $l \in L$ such that $\rho = l \cdot \sigma_{\omega_K}$ and $l$ cannot be a zerodivisor as $\rho$ is an isomorphism, whence $\sigma_{\omega_K} = l^{-1} \cdot \rho$ is an isomorphism; as desired.
\end{proof}

\begin{remark}
In \autoref{pro.InjectivitySigma}, we may add the hypothesis of $f$ being generically flat and remove the one of generic flatness on $M$ (as pointed out in its proof). Such condition is determined by $T$ as $\sigma_R$ is a generic isomorphism by assumption. Indeed, if $(R,\fram, \kay)\subset (S,\fran, \el)$ is a local extension of artinian rings such that $S \to \omega_{S/R}$; $1 \mapsto T$, is an isomorphism, then $S/R$ is free if and only if the inclusion of ideals $\fram S \subset f^! \fram  : T = \{s\in S \mid T(sS) \subset \fram\}$ is an equality. To see this, note that $\dim_{\kay} S/\fram S$ computes the minimal number of generators of $S$ as an $R$-module whereas $\dim_{\kay} S/(f^!\fram : T)$ computes its free rank as $1 \mapsto T$ is an isomorphism; \cf \cite[Lemma 3.6]{BlickleSchwedeTuckerFSigPairs1}. This is all irrelevant if $R$ satisfies $\mathbf{R}_0$.
\end{remark}

\begin{definition}[Transposability] \label{def.transposability}
Working in \autoref{set.Transposability}, let $M$ be a generically flat module in $R\textnormal{-mod}^{\omega}$. Consider the following definitions:
\begin{itemize}
    \item  A map $\varphi \in \sC_{e,M}$ is said to be \emph{$T$-transposable} if $\varphi^{!} \circ F^e_* \varsigma_M \in \Hom_S(F^e_*f^{\dagger} M , f^!M)$ belongs to $\Hom_S(F^e_*f^{\dagger}M, f^{\dagger}M)$, \ie if there is $\psi \in \sC_{e,f^{\dagger}M}$ such that the diagram
\[
\xymatrix@C=3em{
F_*^e f^{\dagger}M  \ar[r]^-{ \psi } \ar[d]_-{ F_*^e \varsigma_M} & f^{\dagger} M \ar[d]^-{\varsigma_M}\\ 
 F_*^e f^!M \ar[r]^-{\varphi^{!}} & f^!M
}
\]
is commutative. Since $\varsigma_M$ is injective, any such $\psi$ is unique and we denote it by $\varphi^{\top}$ (suppressing $T$ from the notation hoping that it is clear from the context) and refer to it as the \emph{$T$-transpose} of $\varphi$.  We denote the set of $T$-transposable maps by $\sC_M^{\top}$, which is a Cartier subalgebra of $\sC_M$.

\item Let $\sC$ be a Cartier $R$-algebra acting on $M$ via $\Xi \: \sC \to \sC_M$. We say that $M$ is a \emph{$T$-transposable} Cartier module if $\Xi$ factors through $\sC^{\top}_M \subset \sC_M$. In particular, $\varsigma_M \: f^{\dagger} M \to f^! M$ is a morphism of Cartier $f^*\sC$-modules where $f^*\sC$ acts on $f^{\dagger} M$ via $\varphi \otimes s \mapsto \varphi^{\top} \cdot s$.  

\item The above definitions do not apply to $M=R$ unless $R$ satisfies $\mathbf{S}_2$, which is only assumed to be $\mathbf{S}_1$. We define the same transposability notions for $M=R$ by replacing $M$ by $R$ and $\varsigma_M$ by $\sigma_R$ in the previous two items---this generalizes the preliminary definitions we gave in the paragraphs that followed \autoref{lem.TransformationsSections}. Of course, if $R$ and $S$ satisfy $\mathbf{S}_2$, these two notions of transposability coincide for $M=R$.
\end{itemize}

\end{definition}

\begin{remark}(Degeneracy) \label{rem.Degeneracy}
Work in the setup of \autoref{def.transposability}. Let $\varphi \: F^e_* M \to M$ be a $T$-transposable map. If any map in the set $\big\{\varphi, \varphi^{!}, \varphi^{\top}\big\}$ is nondegenerate then so are the other two. Indeed, $\varphi^{!}$ and $\varphi^{\top}$ are generically the same map up to isomorphism (of both source and target) as $\varsigma_M$ is generically an isomorphism by \autoref{pro.InjectivitySigma}. To see why $\varphi$ and $\varphi^{!}$ share degeneracy, observe that if $\varphi=0$ then $\varphi^{!}=0$ in general and so generically. Next, suppose $\varphi^{!}$ is generically zero. Since $M$ is generically flat (so free), we may assume that $M=R$. Thus, to show $\varphi$ is generically zero, we show that $\Tr_R \:f_* \omega_{S/R} \to R$ is generically surjective. This follows because $R \to S$ is a finite cover which implies $\omega_{S/R}$ is generically nonzero and so there is a map in $\omega_{S/R}$ whose image contains a regular element. However, it is not clear to the authors whether degenerate maps are $T$-transposable, unless $R$ and $S$ are integral domains. 
\end{remark}

\begin{remark} \label{remTestMosulesoFS1Modules}
By definition, a $T$-transposable Cartier module $M \neq R$ is $\omega$-reflexive. However, we may also work with $\mathbf{S}_1$-modules (\eg $M=R$) when studying their test modules as $\alpha_M \: M \to M^{\omega \omega}$ then realizes $M$ as a Cartier submodule of $M^{\omega \omega}$. Indeed, if $\sC$ is acts on $M$ via $\Xi \: \sC \to \sC_{M}$, then $\sC$ acts on $M^{\omega \omega}$ via $\Xi^{\omega \omega} \: \kappa_e \mapsto \Xi(\kappa_e)^{\omega \omega}$ using the canonical isomorphism $(F^e_* M)^{\omega \omega} \cong F^e_* (M^{\omega \omega})$. Since $\alpha_M$ is a generic isomorphism, $\uptau(\alpha_M)$ is an isomorphism. Thus, in studying test modules, we may replace $M$ with its $\mathbf{S}_2$-ification and consider transposability on $M^{\omega \omega}$.
\end{remark}

\begin{corollary} \label{cor.varSigmatestModules}
Work in \autoref{set.Transposability}. Let $\sC$ be a Cartier $R$-algebra and $M$ be an $\omega$-reflexive $T$-transposable Cartier $\sC$-module. Then $\varsigma_M \: f^{\dagger} M \to f^! M$ induces an isomorphism on test modules, \ie $\uptau(\varsigma_M) \: \uptau(f^{\dagger}M, f^*\sC) \to \uptau (f^!M, f^*\sC)$ is an isomorphism. Further, the same holds with $R$ in place of $M$ and $\sigma_M$ in place of $\varsigma_M$.
\end{corollary}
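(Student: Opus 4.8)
The plan is to use that $\varsigma_M$ is an \emph{injective} morphism of Cartier $f^*\sC$-modules, which holds precisely because $M$ is $T$-transposable (this is how $f^*\sC$ acts on $f^\dagger M$ in \autoref{def.transposability}) and because $\varsigma_M$ is injective by \autoref{pro.InjectivitySigma}, and then to run the universal property defining $\uptau$ at the associated primes of source and target. First I would record the structural facts: by \autoref{pro.InjectivitySigma}, $\varsigma_M\colon f^\dagger M\to f^! M$ is an injective generic isomorphism, so its cokernel is supported in codimension $\geq 1$; moreover $f^\dagger M=(f^*M)^{\omega\omega}$ and $f^! M$ are both $\omega$-reflexive $S$-modules, hence satisfy $\mathbf{S}_2$, so $\Ass(f^\dagger M)$ and $\Ass(f^! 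M)$ consist only of the minimal primes of their respective supports. Since $\varsigma_M$ is an injection with cokernel of codimension $\geq 1$, these two supports share the same minimal primes, and therefore $\Ass(f^\dagger M)=\Ass(f^! M)=:\Sigma$, and for each $\eta\in\Sigma$ the localization $(\varsigma_M)_\eta$ is an isomorphism of Cartier $(f^*\sC)_\eta$-modules over the artinian local ring $S_\eta$.

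Next I would invoke the two facts from \cite{BlickleStablerFunctorialTestModules} already used in the proof of \autoref{lem.RecoveringAllStuuf}: test modules commute with localization, $\uptau(N,\sD)_\eta=\uptau(N_\eta,\sD_\eta)$, and over an artinian local base the inclusion $\uptau(N_\eta,\sD_\eta)\hookrightarrow N_\eta$ is a nil-isomorphism (immediate from the definition of $\uptau$, since over such a base $\Ass$ is a single point and $H^0_\eta$ is the identity). With these in hand the proof closes by a double application of minimality. The preimage $\varsigma_M^{-1}\big(\uptau(f^! M,f^*\sC)\big)$ is a Cartier $f^*\sC$-submodule of $f^\dagger M$; localizing at $\eta\in\Sigma$ and using that $(\varsigma_M)_\eta$ is an isomorphism, the localization formula identifies it with $\uptau\big((f^\dagger M)_\eta\big)\hookrightarrow (f^\dagger M)_\eta$, a nil-isomorphism; hence by minimality of $\uptau(f^\dagger M,f^*\sC)$ one gets $\varsigma_M\big(\uptau(f^\dagger M,f^*\sC)\big)\subseteq \uptau(f^! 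M,f^*\sC)$. Symmetrically, $\varsigma_M\big(\uptau(f^\dagger M,f^*\sC)\big)$ is a Cartier $f^*\sC$-submodule of $f^! M$ whose localization at each $\eta\in\Sigma$ is $\uptau\big((f^! M)_\eta\big)\hookrightarrow (f^! M)_\eta$, again a nil-isomorphism, so minimality of $\uptau(f^! M,f^*\sC)$ forces the reverse inclusion. Thus $\varsigma_M$ restricts to a surjection $\uptau(f^\dagger M,f^*\sC)\twoheadrightarrow \uptau(f^! M,f^*\sC)$, injective because $\varsigma_M$ is, which is the assertion; the identical argument with $\sigma_R$ in place of $\varsigma_M$ (noting $S$ and $\omega_{S/R}$ play the roles of $f^\dagger M$ and $f^! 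M$) proves \autoref{prop.SigmaRisoOnTestModules}.

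The step I expect to be the main obstacle is the verification that $(\varsigma_M)_\eta$ is genuinely an isomorphism for \emph{every} $\eta\in\Sigma$, not merely at the minimal primes of $R$ itself: when $M$ is not supported everywhere, an associated prime of $f^! M$ need not contract to a minimal prime of $R$, and the generic isomorphism statement of \autoref{pro.InjectivitySigma} only directly gives an isomorphism after inverting a regular element of $R$. In the situations of interest (where $M=R(D)$ or $\omega_R(D)$, hence $\Supp M=\Spec R$) this is automatic, and in general it should be reduced to the observation that $\varsigma_M$ is an injection between $\mathbf{S}_2$ $S$-modules whose cokernel, being a quotient inside the $\mathbf{S}_2$ module $f^! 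M$, cannot be supported at a minimal prime of $\Supp f^! M$; I would isolate this point as a short lemma. Everything else is a formal consequence of the localization and nil-isomorphism behavior of test modules recalled above.
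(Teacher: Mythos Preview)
Your approach is correct and essentially the same as the paper's: both arguments identify $\varsigma_M$ as an injective $f^*\sC$-linear generic isomorphism between modules without embedded primes and then apply the minimality property of $\uptau$ twice, once in each direction. The paper's proof is simply a terser rendering of what you wrote.

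Your final caveat is unnecessary, and it is worth saying why. By \autoref{def.transposability}, a $T$-transposable Cartier $\sC$-module $M$ is required to be generically flat and $\omega$-reflexive (hence $\mathbf{S}_2$) over $R$. The $\mathbf{S}_2$ condition forces $\Ass M$ to lie among the minimal primes of $R$; thus $\Sigma=\Ass f^!M=f^{-1}(\Ass M)$ consists of minimal primes of $S$. ``Generic isomorphism'' in \autoref{pro.InjectivitySigma} means precisely an isomorphism at those primes, so $(\varsigma_M)_\eta$ is an isomorphism for every $\eta\in\Sigma$ with no extra lemma needed. Your proposed salvage (``the cokernel cannot be supported at a minimal prime of $\Supp f^!M$'') is not the right mechanism and would not work in general; the actual reason is simply the generic-flatness hypothesis baked into the definition.
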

\begin{proof}
Suppose first that $M$ is $\omega$-reflexive. Observe that $\varsigma_M$ is an injective generic isomorphism between modules having no embedded primes. Since $\varsigma_M$ is an injective $f^\ast\sC$-linear map, $\varsigma_M(\uptau(f^\dagger M, f^\ast\sC))$ is a $f^\ast\sC$-submodule of $f^!M$ that generically agrees with $f^!M$. Thus, $\uptau(f^!M, f^\ast\sC) \subset \varsigma_M\big(\uptau(f^\dagger M, f^\ast\sC)\big)$ by minimality of $\uptau(f^!M, f^\ast\sC)$. Conversely, since $\varsigma_M(f^\dagger M)$ is a $f^\ast\sC$-submodule of $f^! M$ that generically agrees with $f^!M$, it must contain $\uptau(f^!M, f^\ast\sC)$. Then, $\varsigma_M\big(\uptau(f^\dagger M, f^\ast\sC)\big) \subset \uptau(f^!M, f^\ast\sC)$ by minimality of $\uptau(f^\dagger M, f^\ast\sC)$. 

If $M=R$ is not necessarily $\omega$-reflexive, the same proof applies with $\sigma_M$ instead of $\varsigma_M$. Alternatively, one may apply \autoref{remTestMosulesoFS1Modules}.
\end{proof}

\begin{remark} \label{rem.Interpreation}
Working in \autoref{set.Transposability}, assume that $R$ and $S$ satisfy $\mathbf{G}_0$. Let $\sC$ be a Cartier $R$-algebra acting on $\omega_R(D)$ with $D$ an almost Cartier divisor. Notice that the isomorphism $\beta_{\omega_R(D)} \: f^! \omega_R(D) \to f^*(\omega_R(D)^{\omega})^{\omega}$ (as in the proof of \autoref{pro.InjectivitySigma}) yields a natural isomorphism $f^!\omega_R(D) \to \omega_S(f^*D)$. Further, assuming that $R$ and $S$ satisfy $\mathbf{G}_1$, we have:
\[
f^{\dagger} \omega_R(D) =(f^* \sI_{-K_R - D})^{\omega \omega} = \sI_{f^*(-K_R) -f^*D} = \sI_{K_{S/R}-f^*D -K_S} = \omega_S(f^*D - K_{S/R}). 
\]
Moreover, the map $\varsigma_{\omega_R(D)} \: f^{\dagger} \omega_R(D) \to f^{!} \omega_R(D)$ is the embedding $\omega_S(f^*D-K_{S/R}) \to \omega_S(f^*D)$ given by $\Ram_T \sim K_{S/R}$---this is how the map $\chi_{\omega_R(D)}=\gamma_{\omega_R(D)}^{\omega}$ was constructed in the proof of \autoref{pro.InjectivitySigma}. In particular, $f^{\dagger}R(D) \cong S(f^*D)$ whereas $f^!R(D) \cong S(f^*D + K_{S/R})$ and $\varsigma_{R(D)}$ corresponds to the embedding $S(f^*D) \to S(f^*D+K_{S/R})$ defined by $\Ram_T \sim K_{S/R}$.
\end{remark}

\begin{theorem}[Schwede--Tucker's transposability criterion] \label{thm.TransposabilityCriterion}
Working in \autoref{set.Transposability}, assume that $R$ and $S$ satisfy $\mathbf{G}_1+\mathbf{S}_2$ and \eqref{eqn.!}.\footnote{It is enough to assume $R$ satisfies \eqref{eqn.!} as then so does $S$, for $F^!\omega_S = F^! f^! \omega_R = f^! F^! \omega_R \cong f^! \omega_R = \omega_S$.}
Let $D$ be an almost Cartier divisor on $X$. Then, a nondegenerate map $\varphi \: F^e_* \omega_R(D) \to \omega_R(D)$ is $T$-transposable if and only if $f^*\Delta_{\varphi}-\Ram_T$ is effective, in which case $\Delta_{\varphi^{\top}} = f^*\Delta_{\varphi}-\Ram_T$.
\end{theorem}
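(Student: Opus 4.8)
The plan is to pass to generalized divisors via \autoref{rem.Interpreation} and reduce the statement to a purely divisorial fact about restricting a $p^{-e}$-linear map to a twisted sub-fractional-ideal, which I would then verify in codimension one.

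\emph{Step 1 (unwinding transposability).} Put $E\coloneqq f^*D$ and $E_T\coloneqq f^*D-K_{S/R}$. By \autoref{rem.Interpreation} there are natural isomorphisms $f^!\omega_R(D)\cong\omega_S(E)$ and $f^{\dagger}\omega_R(D)\cong\omega_S(E_T)$ under which $\varsigma_{\omega_R(D)}$ becomes, after the isomorphism $\omega_S(E_T)\cong\omega_S(E-\Ram_T)$ coming from $K_{S/R}\sim\Ram_T$ (with $\Ram_T$ effective almost Cartier, being linearly equivalent to the almost Cartier divisor $K_{S/R}$), the inclusion of fractional ideals $\omega_S(E-\Ram_T)\subseteq\omega_S(E)$. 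Since $\varsigma_{\omega_R(D)}$ is injective, \autoref{def.transposability} then says: $\varphi$ is $T$-transposable if and only if $\varphi^{!}$, regarded via these identifications as a map $F^e_*\omega_S(E)\to\omega_S(E)$, carries the submodule $\omega_S(E-\Ram_T)$ into itself; and in that case $\varphi^{\top}$ is precisely this restricted map, so $\Delta_{\varphi^{\top}}$ is the normalized divisor of the restriction.

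\emph{Step 2 ($\Delta_{\varphi^{!}}=f^*\Delta_{\varphi}$).} I would compute $\Delta_{\varphi^{!}}$ from the compatibility of the maps-to-divisors dictionary of \autoref{prop.MApsAndDivisors} with composition. By Grothendieck duality (\autoref{pro.GroDuality}) the trace $\Tr_{\omega_R(D)}\colon f_*\omega_S(E)\to\omega_R(D)$ corresponds to $\mathrm{id}_{f^!\omega_R(D)}$, hence is a free $\sO_S$-generator of $\ssHom_X(f_*\omega_S(E),\omega_R(D))$; that is, its associated divisor for the cover $f$ is $0$. Now read the defining square of $\varphi^{!}$, which asserts $\Tr_{\omega_R(D)}\circ f_*\varphi^{!}=\varphi\circ F^e_*\Tr_{\omega_R(D)}$, as an equality of $\sO_X$-linear maps out of the pushforward of $\omega_S(E)$ along the common cover $f\circ F^e_Y=F^e_X\circ f\colon Y\to X$. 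Applying \autoref{eqn.NaturalityMapsAndDivisors} to the factorization ``first $F^e_Y$, then $f$'' computes this divisor as $D_{\varphi^{!}}+(F^e_Y)^*(0)=D_{\varphi^{!}}$, while the factorization ``first $f$, then $F^e_X$'' computes it as $0+f^*D_{\varphi}=f^*D_{\varphi}$. Hence $D_{\varphi^{!}}=f^*D_{\varphi}$ and, dividing by $q-1$, $\Delta_{\varphi^{!}}=f^*\Delta_{\varphi}$.

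\emph{Step 3 (restriction criterion, and conclusion).} The remaining ingredient is: for a nondegenerate $\psi\colon F^e_*\omega_Y(B)\to\omega_Y(B)$ and an effective almost Cartier $\Gamma$ on $Y$, the map $\psi$ carries $\omega_Y(B-\Gamma)\subseteq\omega_Y(B)$ into itself if and only if $\Delta_{\psi}\geq\Gamma$, in which case the restriction $\psi'\colon F^e_*\omega_Y(B-\Gamma)\to\omega_Y(B-\Gamma)$ satisfies $\Delta_{\psi'}=\Delta_{\psi}-\Gamma$. Granting this and taking $B=f^*D$, $\Gamma=\Ram_T$, $\psi=\varphi^{!}$: by Steps 1 and 2, $\varphi$ is $T$-transposable if and only if $f^*\Delta_{\varphi}=\Delta_{\varphi^{!}}\geq\Ram_T$, i.e.\ if and only if $f^*\Delta_{\varphi}-\Ram_T$ is effective, and then $\Delta_{\varphi^{\top}}=\Delta_{(\varphi^{!})'}=\Delta_{\varphi^{!}}-\Ram_T=f^*\Delta_{\varphi}-\Ram_T$, which is the assertion. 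To prove the criterion I would use that $\omega_Y(B)$, $\omega_Y(B-\Gamma)$ and the ambient sheaves are $\omega$-reflexive and that $B,\Gamma,K_Y$ are Cartier away from codimension $\geq2$, so that both the submodule-preservation condition and the asserted equality of $\bZ_{(p)}$-divisors may be checked after localizing at a codimension $\leq1$ point; there $Y$ is Gorenstein (by $\mathbf{G}_1$) and $\omega_Y(B),\omega_Y(B-\Gamma)$ are free, so trivializing identifies $\psi$ with $x\mapsto\kappa(F^e_*(cx))$ for the local Cartier operator $\kappa$ and a nonzerodivisor $c$ with $D_{\psi}=\divisor(c)$, identifies $\omega_Y(B-\Gamma)\subseteq\omega_Y(B)$ with an ideal $(g)\subseteq\sO$ where $\Gamma=\divisor(g)$, and---using that $\kappa$ generates $\Hom(F^e_*\sO,\sO)$ freely over $F^e_*\sO$ together with the projection formula $\kappa(F^e_*(g^q x))=g\,\kappa(F^e_*x)$---turns ``$\kappa(F^e_*(cg\,\sO))\subseteq g\,\sO$'' into the divisibility $g^{q-1}\mid c$, i.e.\ $\divisor(c)\geq(q-1)\Gamma$, i.e.\ $\Delta_{\psi}\geq\Gamma$; in that case the restriction corresponds to $c/g^{q-1}$, giving $\Delta_{\psi'}=\frac{1}{q-1}\divisor(c/g^{q-1})=\Delta_{\psi}-\Gamma$.

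\emph{Expected main obstacle.} I expect the delicate points to be, in Step 2, keeping straight which of the two covers ($F^e$ or $f$) each divisor is taken with respect to and checking that the trace really is the canonical free generator; and, in Step 3, the reduction to codimension one: since failure of submodule-preservation is witnessed by a nonzero map into a torsion (hence non-$\mathbf{S}_2$) quotient, the reduction must be argued from the $\mathbf{S}_1$/reflexivity of the source and target, not of the quotient.
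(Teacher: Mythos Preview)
Your proposal is correct and follows essentially the same strategy as the paper: identify $f^{\dagger}\omega_R(D)\hookrightarrow f^{!}\omega_R(D)$ with $\omega_S(f^*D-\Ram_T)\subset\omega_S(f^*D)$ via \autoref{rem.Interpreation}, compute $\Delta_{\varphi^{!}}=f^*\Delta_{\varphi}$ from the naturality formula \autoref{eqn.NaturalityMapsAndDivisors}, and reduce to a restriction criterion. The only difference is in Step~3. The paper does not localize; instead it applies \autoref{eqn.NaturalityMapsAndDivisors} twice more to the map $\vartheta\coloneqq\varphi^{!}\circ F^e_*(\text{inclusion})\colon F^e_*\omega_S(f^*D-K_{S/R})\to\omega_S(f^*D)$, obtaining $D_{\vartheta}=D_{\varphi^{!}}+\Ram_T$ on the one hand and, if $\varphi^{\top}$ exists, $D_{\vartheta}=D_{\varphi^{\top}}+q\,\Ram_T$ on the other; comparing these and using that maps correspond bijectively to effective divisors gives the criterion and the formula at once. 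Your codimension-$1$ computation is more explicit but equivalent, and your justification for the reduction (that both $\omega_S(B)$ and $\omega_S(B-\Gamma)$ are $\mathbf{S}_2$, so membership in the subsheaf is detected in codimension $\leq 1$) is the right one.
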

\begin{proof}
By \autoref{rem.Degeneracy}, if $\varphi$ is nondegenerate then so are $\varphi^{!}$ and $\varphi^{\top}$. By \autoref{eqn.NaturalityMapsAndDivisors}, $D_{\varphi^{!}} = f^*D_{\varphi}$ and so $\Delta_{\varphi^{!}} = f^*\Delta_{\varphi}$. By \autoref{eqn.NaturalityMapsAndDivisors} again, the divisor associated to
\[
\vartheta \: F^e_* \omega_S(f^*D - K_{S/R}) \to F^e_* \omega_S(f^*D) \xrightarrow{\varphi^{!}} \omega_S(f^*D)
\]
is $D_{\varphi^{!}} + \Ram_T$. However, $D_{\varphi^{\top}} + q \Ram_T$ is the divisor associated to a composition
\[
F^e_* \omega_S(f^*D - K_{S/R}) \xrightarrow{\varphi^{\top}} \omega_S(f^*D - K_{S/R}) \to \omega_S(f^*D)
\]
Therefore, $\vartheta$ restricts to $\omega_S(f^*D-K_{S/R})$ if and only if $D_{\varphi^{!}} -(q-1)\Ram_T$ is effective, \ie if $\Delta_{\varphi^{!}}-\Ram_T$ is effective. In that case, $\Delta_{\varphi^{\top}} = \Delta_{\varphi^{!}}-\Ram_T$; as required.
\end{proof}

\begin{proposition} \label{pro.RecoveringAllStuff}
Work in the setup of \autoref{thm.TransposabilityCriterion} and suppose that $f$ is generically flat. Let $\Delta$ be an effective almost Cartier $\bQ$-divisor on $X$ and choose $\mathfrak{a} \subset R$, $t \in \bR_{\geq 0}$. Suppose that $\Delta^* \coloneqq f^* \Delta - \Ram_T \geq 0$ and let $\Xi \: f^*\sC_{R(D)}^{\Delta, \mathfrak{a}^t} \to \sC_{S(f^*D)}^{\Delta^*, (\mathfrak{a} \cdot S)^t}$ be the homomorphism defined by \autoref{thm.TransposabilityCriterion}. Then, there exists a regular element $c \in S$ such that
\[
\sC_{S(f^*D)}^{\Delta^*, (\mathfrak{a} \cdot S)^t} \cdot c \subset  \Xi \Big(f^*\sC_{R(D)}^{\Delta, \mathfrak{a}^t} \Big) \subset \sC_{S(f^*D)}^{\Delta^*, (\mathfrak{a} \cdot S)^t}.
\]
In particular, \[\uptau\Big(S(f^*D), f^*\sC_{R(D)}^{\Delta, \mathfrak{a}^t} \Big) = \uptau\Big(S(f^*D),\sC_{S(f^*D)}^{\Delta^*, (\mathfrak{a} \cdot S)^t}\Big) \coloneqq \uptau\big(S(f^*D), \Delta^*, (\mathfrak{a}\cdot S)^t\big)\]
Likewise, if $f$ is local and $D=0$, then
\[
s\Big(S,f^*\sC_{R}^{\Delta, \mathfrak{a}^t}\Big) = s\big(S, \Delta^*, (\mathfrak{a}\cdot S)^t\big).
\]
\end{proposition}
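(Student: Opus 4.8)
The plan is to prove the displayed double inclusion for a suitable $S$-regular $c$ and then extract the two ``In particular'' statements from \autoref{lem.RecoveringAllStuuf}. Since $R(D)$ is a $T$-transposable Cartier $\sC_{R(D)}^{\Delta,\mathfrak a^t}$-module (by \autoref{thm.TransposabilityCriterion}, using $\Delta^*\ge 0$), the algebra $f^*\sC_{R(D)}^{\Delta,\mathfrak a^t}$ acts on $S(f^*D)\cong f^{\dagger}R(D)$ through $\Xi$ (\autoref{def.transposability}, \autoref{rem.Interpreation}), so $\uptau\big(S(f^*D),f^*\sC_{R(D)}^{\Delta,\mathfrak a^t}\big)=\uptau\big(S(f^*D),\Xi(f^*\sC_{R(D)}^{\Delta,\mathfrak a^t})\big)$ and likewise $s\big(S,f^*\sC_R^{\Delta,\mathfrak a^t}\big)=s\big(S,\Xi(f^*\sC_R^{\Delta,\mathfrak a^t})\big)$ when $D=0$. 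As $S(f^*D)\cong\omega_S(f^*D-K_{S/R})$ is $\omega$-reflexive, hence torsion free over the $\mathbf{S}_1$ ring $S$, every regular element of $S$ is $S(f^*D)$-regular, so \autoref{lem.RecoveringAllStuuf} applies with $\sC=\Xi(f^*\sC_{R(D)}^{\Delta,\mathfrak a^t})\subseteq\sD=\sC_{S(f^*D)}^{\Delta^*,(\mathfrak a\cdot S)^t}$ and $M=S(f^*D)$ (and its local $M=R$ clause when $D=0$), and yields both equalities. The entire content is therefore the double inclusion.

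The right-hand inclusion is built into the definition of $\Xi$: by \autoref{thm.TransposabilityCriterion} a nondegenerate $\varphi\in\sC_{e,R(D)}^{\Delta}$ has $\Delta_\varphi\ge\Delta$, hence $f^*\Delta_\varphi-\Ram_T\ge\Delta^*\ge 0$, so $\varphi$ is $T$-transposable with $\Delta_{\varphi^\top}=f^*\Delta_\varphi-\Ram_T\ge\Delta^*$; and the divisibility factor $\mathfrak a^{\lceil t(q-1)\rceil}$ transposes to $(\mathfrak a\cdot S)^{\lceil t(q-1)\rceil}$ because $(\psi\cdot F_*^e a)^\top=\psi^\top\cdot F_*^e a$ for $a\in R$ by \autoref{eqn.TransposabilityEquation}. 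Generic flatness of $f$ enters here to ensure, as in the proof of \autoref{pro.InjectivitySigma}, that $\varsigma$ is an isomorphism in codimension $0$, so the transpose and these identifications are unambiguous.

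For the left-hand inclusion I would use Schwede's correspondence to describe the graded pieces as fractional $S$-ideals. By \autoref{ex.SchwedeCorrespondence}, \autoref{exa.Cartieralgebradivisor} and \autoref{rem.Interpreation}, after fixing $h$ with $\Div h=\Ram_T-K_{S/R}$ and dividing out $h^{-(q-1)}$ and the fixed factor $(\mathfrak a\cdot S)^{\lceil t(q-1)\rceil}$, the piece $\Xi\big(f^*\sC_{e,R(D)}^{\Delta,\mathfrak a^t}\big)$ contains the $S$-submodule of $\sK_S$ generated by the image of the fractional $R$-ideal $R(G_e)$, $G_e=(q-1)(D-K_X)-\lceil(q-1)\Delta\rceil$, whereas $\sC_{e,S(f^*D)}^{\Delta^*,(\mathfrak a\cdot S)^t}$ is the $\omega$-reflexive ideal $S(f^*G_e+C_e)$ with $0\le C_e\le B$, where $B$ is the \emph{fixed} effective divisor bounding $f^*\lceil(q-1)\Delta\rceil-\lceil(q-1)f^*\Delta\rceil$; the estimate $0\le m_Q\lceil(q-1)\delta_P\rceil-\lceil m_Q(q-1)\delta_P\rceil\le m_Q-1$, with $m_Q$ the relevant ramification index, is uniform in $e$. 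One then writes $c=c_1c_2$ and takes for $c_1$ any regular element with $\Div c_1\ge B$ — which exists by prime avoidance, $B$ being supported on finitely many height-$1$ primes, none associated to $S$ — so that $c_1\cdot S(f^*G_e+C_e)\subseteq S(f^*G_e)$ for all $e$.

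The remaining task, a single regular $c_2\in S$ with $c_2\cdot S(f^*G_e)\subseteq R(G_e)\cdot S$ for every $e$ — equivalently, a uniform-in-$e$ comparison of $R(G_e)\cdot S$ with its $\mathbf{S}_2$-ification $S(f^*G_e)=(R(G_e)\cdot S)^{\omega\omega}$ — is the step I expect to be the main obstacle. The cokernels are all supported on the single codimension-$\ge 2$ locus $f^{-1}(Z)$, with $Z\subset X$ the non-Cartier locus of $D$, $K_X$ and of every $\lceil(q-1)\Delta\rceil$ (one big open serves all $e$), but this alone does not give a uniform annihilator, so a finiteness input is needed. I would obtain $c_2$ from gauge boundedness: use the gauge boundedness of $\sC_{R(D)}^{\Delta,\mathfrak a^t}$ upstairs \cite[\S4.3]{BlickleSchwedeTuckerFSigPairs1} to pick finitely many homogeneous $\varphi_1,\dots,\varphi_k\in\sC_{R(D)}^{\Delta,\mathfrak a^t}$ and a regular $d\in R$ with $d\cdot\sC_{e,R(D)}^{\Delta,\mathfrak a^t}\subseteq\langle\varphi_1,\dots,\varphi_k\rangle_e$ for all $e$ (where $\langle\varphi_1,\dots,\varphi_k\rangle$ is the $R$-Cartier subalgebra they generate), so that this subalgebra agrees with $\sC_{R(D)}^{\Delta,\mathfrak a^t}$ generically in every degree; applying the ring homomorphism $\Xi(f^*(-))$, which satisfies $\Xi(f^*\varphi_i^n)=(\varphi_i^\top)^n$, replaces $\Xi\big(f^*\sC_{R(D)}^{\Delta,\mathfrak a^t}\big)$, up to codimension $0$ (using \autoref{pro.InjectivitySigma}), by the \emph{finitely generated} $S$-Cartier subalgebra $\langle\varphi_1^\top,\dots,\varphi_k^\top\rangle\subseteq\sC_{S(f^*D)}^{\Delta^*,(\mathfrak a\cdot S)^t}$, and one finishes by the standard comparison of a finitely generated Cartier subalgebra with a gauge-bounded one agreeing with it generically in every degree (the mechanism behind \autoref{lem.RecoveringAllStuuf} and \cite[\S2.2]{CarvajalSchwedeTuckerEtaleFundFsignature}), which produces the required uniform regular element $c$. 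Verifying that the transposes $\varphi_i^\top$ of a gauge-bounding family upstairs still gauge-bound downstairs — where \autoref{thm.TransposabilityCriterion} and the uniformly bounded ceiling correction $B$ re-enter — is the technical crux.
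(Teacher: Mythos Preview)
Your overall architecture is right and matches the paper: establish the double inclusion, then invoke \autoref{lem.RecoveringAllStuuf}. Your treatment of the right-hand inclusion and of the ceiling discrepancy factor $c_1$ (the paper's $b$) is essentially the same as the paper's.

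The difference is in how you obtain what you call $c_2$: a single regular element annihilating $\coker\big(R(G_e)\cdot S \to S(f^*G_e)\big)$ uniformly in $e$. You propose a gauge-boundedness reduction to finitely many generators $\varphi_i$, then transpose, then compare $\langle\varphi_1^\top,\dots,\varphi_k^\top\rangle$ with $\sC_{S(f^*D)}^{\Delta^*,(\mathfrak a S)^t}$. You yourself flag the last step as ``the technical crux,'' and indeed it is not clear that generic agreement plus gauge-boundedness of the target algebra alone produces the uniform regular element you need in the direction $\sD\cdot c\subset\Xi(f^*\sC)$; the finitely many generators bounding $\sD$ have no a priori reason to land in $\langle\varphi_i^\top\rangle$.

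The paper avoids all of this with a much simpler, purely module-theoretic observation that uses generic flatness of $f$ directly (this is where that hypothesis is actually used). Since $f$ is generically flat, one can choose a free $R$-submodule $G\subset S$ of rank $[L:K]$ together with a regular $a\in R$ such that $a\cdot S\subset G\subset S$. For \emph{any} almost Cartier divisor $D'$ on $X$, tensoring with $R(D')$ and $\mathbf{S}_2$-ifying gives a chain $a\cdot S(f^*D')\subset G'\subset S(f^*D')$ with $G'\cong R(D')^{\oplus[L:K]}$ sitting over the image of $S\otimes_R R(D')$. Hence the single element $a$ annihilates $\coker\big(S\otimes_R R(D')\to S(f^*D')\big)$ for \emph{all} $D'$ simultaneously; in particular for every $D'=G_e$. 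Taking $c=a\cdot b$ (your $c_1c_2$) finishes the proof. No Cartier-algebra finiteness is needed for this step.
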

\begin{proof}
We may assume $\mathfrak{a}=0$ as the general case readily follows from this case. The hypothesis $\Delta^*\geq 0$ implies that every map $\varphi \: F^e_* R(D)  \to R(D)$ with $\Delta_{\varphi} \geq \Delta$ satisfies $f^* \Delta_{\varphi} - \Ram_T \geq 0$, which means that $R(D)$ is a $T$-transposable Cartier $\sC_{R(D)}^{\Delta}$-module by \autoref{thm.TransposabilityCriterion}. Moreover, the structural homomorphism $f^*\sC_{R(D)}^{\Delta} \to \sC_{S(f^*D)}$; given by $\varphi \otimes s \mapsto \varphi^{\top} \cdot s$, factors through the inclusion $\sC_{S(f^*D)}^{\Delta^*} \subset \sC_{S(f^*D)}$. This defines the homomorphism $\Xi \: f^*\sC_{R(D)}^{\Delta} \to \sC_{S(f^*D)}^{\Delta^*}$.\footnote{$\Xi$ is an isomorphism if $\Delta = \Delta_{\varphi}$ for some $\varphi \in \sC_{e,R(D)}$ as then both $f^*\sC_{R(D)}^{\Delta}$ and $\sC_{S(f^*D)}^{\Delta^*}$ are $\sC^{\varphi^{\top}}_{S(f^*D)}$}. Writing $\sC_{R(D)}^{\Delta} \cong \bigoplus_e R\big((q-1)(D-K_X)-\lceil (q-1) \Delta \rceil \big)$ via Cartier operators (see \autoref{exa.Cartieralgebradivisor} and \autoref{fullCartieralgebras}), the $\mathbf{S}_2$-ification of $f^*\sC_{e,R(D)}^{\Delta}$ corresponds to
\begin{align*}
  & {}S\big((q-1)(f^*D-f^*K_X)-f^* \lceil(q-1) \Delta \rceil\big) \\
  = & {}S\Big((q-1)(f^*D-K_Y)-\big(f^* \lceil(q-1) \Delta \rceil-(q-1)K_{Y/X}\big)\Big) 
\end{align*}
Further, by the naturality of Cartier operators, $\Xi_e$ corresponds to
\begin{align*} 
 f^*\sC_{e,R(D)}^{\Delta} \xrightarrow{\alpha_e} {}& S\Big((q-1)(f^*D-K_Y)-\big(f^* \lceil(q-1) \Delta \rceil-(q-1)K_{Y/X}\big)\Big) \\
  \xrightarrow{\cong \: \Ram_T \sim K_{Y/X}}{} & S\Big((q-1)(f^*D-K_Y)-\big(f^* \lceil(q-1) \Delta \rceil-(q-1)\Ram_T\big)\Big) \\
  \subset{} & S\Big((q-1)(f^*D-K_Y)-\big( \lceil(q-1) f^*\Delta \rceil-(q-1)\Ram_T\big)\Big) \\
  ={} & S\big((q-1)(f^*D - K_Y) - \lceil (q-1)\Delta^* \rceil\big) \cong \sC_{e,S(f^*D)}^{\Delta^*}.
\end{align*}
where the inclusion is defined by the inequality $0 \leq f^* \lceil (q-1)\Delta \rceil - \lceil (q-1) f^* \Delta \rceil $.

\begin{claim} \label{cla.ExistenceOfa}
There exists a regular element $a \in R$ that belongs to the annihilator of $\coker(S \otimes_R R(D') \to S(f^* D'))$ for all almost Cartier divisors $D'$ on $X$. 
\end{claim}
\begin{proof}[Proof of claim]
Since $f$ is generically flat, we may find a free $R$-submodule $G \subset S$ of rank $[L:K]$ and such that $a \cdot S \subset G \subset S$ for some regular element $a \in R$. In particular, there is an induced commutative diagram for all almost Cartier divisor $D'$ on $X$
\[
\xymatrix{
(a \cdot S) \otimes R(D') \ar[r] \ar[d] & G \otimes R(D') \ar[r] \ar[d]^-{\cong} & S \otimes R(D') \ar[d] \\
a \cdot S(f^* D') \ar[r] & G' \ar[r]  & S(f^*D')
}
\]
where the vertical arrows are $\mathbf{S}_2$-ifications either as $R$-modules or $S$-modules when appropriate (which does not matter as both $R$ and $S$ satisfy $\mathbf{S}_2$) and the lower horizontal arrows are inclusions. In particular, $G' \cong R(D')^{\oplus [L:K]}$ is an $R$-submodule of $S(f^* D')$ that contains $a \cdot S(f^* D')$. This proves the claim.
\end{proof}

Let $a \in R$ be as in \autoref{cla.ExistenceOfa}, so $a \in \bigcap_e \Ann \coker \alpha_e$. Let $b \in S$ a regular element such that  $0 \leq f^* \lceil (q-1)\Delta \rceil - \lceil (q-1) f^* \Delta \rceil \leq \Div_S b $ (see \cite[\S 2.2]{SchwedeTuckerTestIdealFiniteMaps} and \cite[Lemma 3.10]{Carvajalphdthesis}). Then,
\[
\sC_{e,S(f^*D)}^{\Delta^*} \cdot b \subset S\Big((q-1)(f^*D-K_Y)-\big( f^*\lceil(q-1) \Delta \rceil-(q-1)\Ram_T\big)\Big) \subset \sC_{e,S(f^*D)}^{\Delta^*}
\]
for all $e$. Thus, we may take $c \coloneqq a \cdot b$. The remaining statements follow from \autoref{lem.RecoveringAllStuuf}.
\end{proof}

\section{Transformation rule for the $F$-signature and splitting ratio}

In this section, we prove transformation rules for $F$-signature and splitting ratio under finite covers that generalize those in \cite[Theorems 3.1, 4.4]{CarvajalSchwedeTuckerEtaleFundFsignature}, \cite[Theorem 4.11]{CarvajalFiniteTorsors}.

\subsection{Generalizing a Theorem of Tucker} Let $(R,\fram, \kay)$ be a local ring. Given $M$ in $R\textnormal{-fmod}$, we define $M^{\flat} \coloneqq \bigoplus_e M^{\flat}_e \coloneqq \bigoplus_e \Hom_R(F_*^e M, R)$, which is endowed with a graded left $\sC_R$-module structure: $\varphi \cdot \vartheta  = \varphi \circ F_*^d \vartheta$ for all $\varphi \in \sC_{d,R}$, $\vartheta \in M^{\flat}_{e}$. Denote the category of graded left modules over a Cartier algebra $\sC$ by $\sC$\textnormal{-glmod}.  Note that $M \mapsto M^{\flat}$ is a contravariant functor  $R\textnormal{-fmod} \to \sC\textnormal{-glmod}$.\footnote{However, $M^{\flat}$ is not necessarily finitely generated as an $R$-module, therefore not a Cartier module.} Given a Cartier algebra $\sC \subset \sC_R$ and $\sM \subset M^{\flat}$ in $\sC$\textnormal{-glmod}, we define the \emph{splitting numbers}
\[
a_e(\sM) \coloneqq \lambda_R\big(\sM_e/\sM_e^{\textnormal{ns}}\big),
\]
where lengths are computed as left $R$-modules and $\sM_e^{\textnormal{ns}}\coloneqq \sM_e \cap \Hom_R(F_*^eM, \fram)$. Note that $\sM_e^{\textnormal{ns}} \supset \fram \cdot \sM_e$ and so $a_e(\sM)$ is finite. Set $n_{\sM} \coloneqq \gcd \{ e \mid a_e(\sM) \neq 0\}$ and $\delta=\dim R + \log_p [\kay^{1/p}:\kay]$ as in \cite{BlickleSchwedeTuckerFSigPairs1}. Following \cite{YaoModulesWithFFRT}, we define the \emph{splitting ratio of $\sM$}
\[
\asr(\sM)\coloneqq \lim_{e\rightarrow \infty}{a_{e \cdot n_{\sM}}(\sM) /q^\delta }.
\]
\begin{theorem}[{\cf \cite[Theorem 4.11]{TuckerFSigExists}}]
If $(R, \fram, \kay, K)$ is a local domain and $M$ is a finitely generated $R$-module, then $\asr(M^{\flat})=\rank M \cdot s(R, \sC_R)$.
\end{theorem}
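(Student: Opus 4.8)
The plan is to reduce the statement, by a standard filtration/localization argument, to the two extreme cases $M = R$ and $M$ torsion (i.e.\ $\rank M = 0$), and in the first case to recognize $R^\flat = \sC_R$ as a graded $\sC_R$-module so that $\asr(R^\flat) = s(R,\sC_R)$ by the very definition of the $F$-signature recalled in \autoref{subsubsec.SplittingPrimesRatios}. First I would observe that for $M$ in $R\textnormal{-fmod}$ the formation of $M^\flat_e = \Hom_R(F^e_* M, R)$ is compatible with short exact sequences in a controlled way: a short exact sequence $0 \to M' \to M \to M'' \to 0$ of finite $R$-modules induces, for each $e$, a four-term exact sequence $0 \to (M'')^\flat_e \to M^\flat_e \to (M')^\flat_e \to \Ext^1_R(F^e_* M'', R)$, all maps being $\sC_R$-linear in the appropriate graded sense. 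The point is that the cokernel $\Ext^1_R(F^e_* M'', R)$ does not contribute to splitting numbers in the limit: if $M''$ is torsion then $\Ext^1_R(F^e_* M'', R)$ is again torsion, hence supported on a proper closed subset, and a dimension count (as in the proof that $s$ is well defined, \cite{TuckerFSigExists, BlickleSchwedeTuckerFSigPairs1}) shows its contribution to $a_{e}$ grows like $o(q^\delta)$.

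Next I would set up the filtration. Since $R$ is a domain, choose a filtration $0 = M_0 \subset M_1 \subset \cdots \subset M_n = M$ whose quotients $M_i/M_{i-1}$ are each either $R$ or a torsion module; exactly $\rank M$ of the quotients are copies of $R$ (this is the usual structure of a finite module over a domain: $M$ has a free submodule of rank $\rank M$ with torsion cokernel, and the torsion part is filtered by cyclic quotients $R/\mathfrak{p}$). Applying the functor $(-)^\flat$ and using the four-term sequences above, together with additivity of splitting numbers up to $o(q^\delta)$-error for the torsion quotients and for the $\Ext^1$ cokernels, I would conclude
\[
a_{e}(M^\flat) = \rank M \cdot a_{e}(R^\flat) + o(q^\delta),
\]
whence dividing by $q^\delta$ and passing to the limit along multiples of the relevant $\gcd$ gives $\asr(M^\flat) = \rank M \cdot \asr(R^\flat)$. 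Here one must be slightly careful that $n_{M^\flat}$ and $n_{R^\flat}$ need not literally coincide, but since the torsion contributions are negligible the nonvanishing pattern of $a_e(M^\flat)$ agrees with that of $a_e(R^\flat)$ for $e \gg 0$ in the sense needed to identify the two limits; this is exactly the kind of bookkeeping done in \cite[Lemma 4.17]{BlickleSchwedeTuckerFSigPairs1}.

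Finally, for $M = R$ one has $R^\flat_e = \Hom_R(F^e_* R, R) = \sC_{e,R}$, and the graded $\sC_R$-module structure on $R^\flat$ defined by $\varphi \cdot \vartheta = \varphi \circ F^d_* \vartheta$ is precisely the ring multiplication of $\sC_R$; moreover $(R^\flat_e)^{\textnormal{ns}} = \Hom_R(F^e_* R, \fram) = \sC_{e,R}^{\mathrm{ns}}$, so $a_e(R^\flat) = \lambda_R(\sC_{e,R}/\sC_{e,R}^{\mathrm{ns}}) = a_e(R,\sC_R)$ and therefore $\asr(R^\flat) = s(R,\sC_R)$ by definition. Combining with the previous paragraph yields $\asr(M^\flat) = \rank M \cdot s(R,\sC_R)$. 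The main obstacle I anticipate is not any single step but the uniform control of the error terms: one needs that, across the finitely many torsion quotients in the filtration \emph{and} across the $\Ext^1$-cokernels, the total discrepancy between $a_e(M^\flat)$ and $\rank M \cdot a_e(R^\flat)$ is genuinely $o(q^\delta)$ and does not interfere with the $\gcd$-normalization in the definition of $\asr$; this is where I would lean on the by-now-standard estimates of Tucker and of Blickle--Schwede--Tucker for lengths of Frobenius pushforwards of torsion modules, applied localizing at the finitely many associated primes involved.
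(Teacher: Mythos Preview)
The paper does not prove this statement; it is cited from \cite{TuckerFSigExists}. The relevant comparison is with the paper's proof of the immediate generalization \autoref{thm.TuckerGeneralized}, which follows Tucker's method and specializes to the case at hand.

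Your decomposition is right in spirit but over-engineered, and the mechanism you propose for controlling the error does not work as stated. The four-term sequence $0 \to (M'')^\flat_e \to M^\flat_e \xrightarrow{\iota} (M')^\flat_e \to \Ext^1_R(F^e_*M'',R)$ relates the \emph{modules} $(-)^\flat_e$, not the splitting numbers $a_e$. Knowing that $\coker\iota$ has small (or torsion) length does not by itself bound $|a_e(M^\flat)-a_e((M')^\flat)|$: the induced map on the quotients $(-)^\flat_e/(-)^{\mathrm{ns}}_e$ is in general neither injective nor surjective, since a surjection $F^e_*M\to R$ can restrict to a nonsurjection on $F^e_*M'$ (take $R=\kay\llbracket x\rrbracket$, $M'=R\subset M=x^{-1}R$, and $\vartheta=(x^{-1/q})^\vee$ on $F^e_*M=x^{-1/q}R^{1/q}$: then $\vartheta$ is surjective but $\vartheta(R^{1/q})=(x)$), and a surjection on $F^e_*M'$ need not extend. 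So the ``dimension count on $\Ext^1$'' you invoke is not the quantity that governs the discrepancy.

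The paper (and Tucker) bypass this entirely. One takes a \emph{single} free submodule $L\subset M$ of rank $g=\rank M$ with torsion cokernel $T$; since $\Hom_R(F^e_*T,R)=0$, restriction embeds $M^\flat_e\hookrightarrow L^\flat_e$, and for $0\neq c\in\Ann_R T$ one has the sandwich $L^\flat_e\cdot c\subset M^\flat_e\subset L^\flat_e$. Now $a_e$ is rewritten dually as a colength inside $F^e_*L=\bigoplus_{i=1}^g F^e_*R$: one obtains ideals $I_e\subset J_{i,e}\subset I_e:c$ for each summand and concludes by the standard estimate of \cite[Lemma~4.17]{BlickleSchwedeTuckerFSigPairs1}. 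No $\Ext^1$ bound is needed, and the further filtration of the torsion part by cyclic quotients $R/\mathfrak p$ is unnecessary --- a single $c$ handles the whole torsion cokernel at once. Your final paragraph correctly identifies this step as the crux; the fix is to replace the $\Ext^1$ bookkeeping with the sandwich.
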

Let $\sC \subset \sC_R$ be a Cartier algebra and $M$ in $R\textnormal{-fmod}$. Define $M^{\natural,\sC}\subset M^{\flat}$ in degree $e$ as:
\[
M^{\natural}_e = M^{\natural, \sC}_e \coloneqq  \big\langle \varphi \circ F_*^e \rho \in M^{\flat}_e \bigm| \varphi \in \sC_e \text{ and } \rho \in M^{\vee}=\Hom_R(M,R)\big\rangle_{\mathbb{Z}}.
\]
Observe that $M \mapsto M^{\natural}$ defines a contravariant functor $R\textnormal{-fmod} \to \sC\textnormal{-glmod}$. Consider the functor $R\textnormal{-fmod} \to \sC\textnormal{-glmod}$, $M \mapsto \sC \otimes_R M^\vee$. Then, $\eta_M\: \sC \otimes_R M^\vee \to M^{\natural}$, $\varphi \otimes \rho \mapsto \varphi \circ F_\ast^e \rho$ is a surjective natural transformation (although not injective in general).

\begin{theorem} \label{thm.TuckerGeneralized} Let $(R,\fram, \kay, K)$ be a local domain and $\sC \subset \sC_R$ a Cartier $R$-algebra. Then, $\asr\big(M^{\natural} \big)=\rank M \cdot s(R,\sC)$ for all finite $R$-modules $M$.
\end{theorem}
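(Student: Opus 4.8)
The plan is to reduce, through a chain of $\asr$-preserving steps, to the case where $M$ is free, where the statement is essentially a tautology. Since $R$ is a domain, every $R$-linear map $F^e_*M\to R$ kills the torsion submodule $T\subseteq M$, so $M^{\natural}=(M/T)^{\natural}$ as graded $\sC$-submodules and $\rank M=\rank(M/T)$; hence we may assume $M$ is torsion-free of rank $r$. For $M=R$ one has $R^{\vee}=R$, so $R^{\natural}_e$ is exactly the image of $\sC_e$ in $\sC_{e,R}=\Hom_R(F^e_*R,R)$; since splitting numbers are unchanged after passing to this image, $a_e(R^{\natural})=a_e(R,\sC)$ and $\asr(R^{\natural})=s(R,\sC)$. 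For $M=R^{\oplus r}$, the identification $\Hom_R(F^e_*R^{\oplus r},R)=\Hom_R(F^e_*R,R)^{\oplus r}$ carries $(R^{\oplus r})^{\natural}_e$ onto $(R^{\natural}_e)^{\oplus r}$ and the submodule $\bigl((R^{\oplus r})^{\natural}_e\bigr)^{\mathrm{ns}}$ onto $\bigl((R^{\natural}_e)^{\mathrm{ns}}\bigr)^{\oplus r}$; hence $a_e\bigl((R^{\oplus r})^{\natural}\bigr)=r\,a_e(R,\sC)$, $n_{(R^{\oplus r})^{\natural}}=n_{\sC}$, and $\asr\bigl((R^{\oplus r})^{\natural}\bigr)=r\,s(R,\sC)$.

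For general torsion-free $M$ of rank $r$, choose an embedding $M\hookrightarrow R^{\oplus r}$; being injective with torsion cokernel (same rank, $R$ a domain), it is a \emph{generic isomorphism}. The contravariant functor $N\mapsto N^{\natural}$ sends an inclusion $N'\hookrightarrow N$ to a $\sC$-linear map $N^{\natural}\to (N')^{\natural}$ obtained by restriction along $F^e_*N'\hookrightarrow F^e_*N$, so we get $\phi^{\natural}\colon(R^{\oplus r})^{\natural}\to M^{\natural}$. Thus the theorem reduces to the following statement, which is the analogue for $\asr$ of \autoref{lem.RecoveringAllStuuf}: \emph{if $\phi\colon M\to N$ is a generic isomorphism of finite $R$-modules, then $\asr(M^{\natural})=\asr(N^{\natural})$.} Granting it, $\asr(M^{\natural})=\asr\bigl((R^{\oplus r})^{\natural}\bigr)=r\,s(R,\sC)$, and we are done.

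It remains to prove this insensitivity statement, and this is the crux. Write $\phi^{\natural}_e\colon N^{\natural}_e\to M^{\natural}_e$ (using $M\hookrightarrow N$ after reducing to the case of an inclusion with cokernel killed by a regular element $c$, which is automatic). First, since $\bigl(\phi^{\natural}(N^{\natural}_e)\bigr)^{\mathrm{ns}}\supseteq\phi^{\natural}\bigl((N^{\natural}_e)^{\mathrm{ns}}\bigr)$ and the natural map of quotients by the $\mathrm{ns}$-submodules is injective, one gets $a_e(M^{\natural})\ge a_e(\phi^{\natural}N^{\natural})$ easily; the substance is showing that all the relevant discrepancies are $o(q^{\delta})$. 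There are two of them: the cokernel of $\phi^{\natural}_e$ — which, because $c\,M^{\natural}_e\subseteq\phi^{\natural}_e(N^{\natural}_e)$, is a quotient of $\sC_e\otimes_R V$ for a \emph{fixed} finite $R$-module $V$ supported in dimension $<\dim R$ (coming from $\coker\phi$) — and the discrepancy between $\bigl(\phi^{\natural}_e(N^{\natural}_e)\bigr)^{\mathrm{ns}}$ and $\phi^{\natural}_e\bigl((N^{\natural}_e)^{\mathrm{ns}}\bigr)$, which is controlled by $\lambda_R\bigl(F^e_*N/(F^e_*M+\fram F^e_*N)\bigr)=\mu_R\bigl(F^e_*(N/M)\bigr)$. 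In both cases one is bounding the number of minimal generators of a Frobenius pushforward of a module of dimension $<\dim R$, which is $O(q^{\delta-1})=o(q^{\delta})$ by the Kunz–Monsky growth estimate $\lambda_R(W/\fram^{[q]}W)=O(q^{\dim W})$ (the residue‑field factor $[\kay^{1/q}:\kay]$ being absorbed into $\delta$). Hence $\lvert a_e(M^{\natural})-a_e(N^{\natural})\rvert=o(q^{\delta})$ and $\asr(M^{\natural})=\asr(N^{\natural})$. This uniform‑convergence argument — in the exact spirit of the estimates underlying \autoref{lem.RecoveringAllStuuf} and of \cite{TuckerFSigExists,YaoModulesWithFFRT,BlickleSchwedeTuckerFSigPairs1} — is where the real work lies; the rest of the proof is formal bookkeeping with the functor $(-)^{\natural}$.
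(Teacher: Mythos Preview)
Your overall strategy matches the paper's: relate $M$ to a free module of the same rank via a generic isomorphism and show the splitting numbers agree up to $o(q^\delta)$. Two differences are worth flagging.

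First, the paper embeds a free $L$ \emph{into} $M$, not $M$ into a free $N$. Since $(-)^\natural$ is contravariant, this lands $M^\natural_e$ inside $L^\flat_e$ with $L$ free, and the containment $L^\natural_e\cdot c\subset M^\natural_e\subset L^\natural_e$ (right action) translates, via the standard duality $a_e(\sM)=\lambda_R(F^e_*L/\{\text{common kernel mod }\fram\})$, into containments of honest ideals of $R$, one per free summand: $I_e\subset J_{i,e}\subset I_e:c$. This is exactly the hypothesis of \cite[Lemma~4.17]{BlickleSchwedeTuckerFSigPairs1}, and the proof ends there. Your direction instead lands everything in $M^\flat$ with $M$ not free, so this clean reduction is unavailable.

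Second, and more seriously, your bound for discrepancy (a) has a gap. You correctly observe that $\coker\phi^\natural_e$ is a quotient of $\sC_e\otimes_R V$ with $V=\coker\phi^\vee$, but then assert that ``in both cases one is bounding the number of minimal generators of a Frobenius pushforward of a module of dimension $<\dim R$.'' That is true for (b) but not for (a): $\sC_e\otimes_R V$ is not a Frobenius pushforward in any evident sense---the tensor is over the \emph{right} $R$-action on $\sC_e$ (pre-multiplication), which is not a Frobenius twist---and the naive estimate $\mu_R(\sC_e\otimes_R V)\le\mu_R(\sC_e)\cdot\mu_R(V)$ gives only $O(q^\delta)$. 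Relatedly, the containment you need is $M^\natural_e\cdot c\subset\phi^\natural_e(N^\natural_e)$ with the \emph{right} action; the left-action version ``$c\,M^\natural_e\subset\cdots$'' you wrote does not hold, since $\rho\in M^\vee$ need not extend to $N^\vee$. The correct fix is to work dually: letting $I_e\subset I'_e\subset M$ be the splitting submodules for $M^\natural_e$ and for $\phi^\natural_e(N^\natural_e)$, the right-action containment gives $I_e\subset I'_e\subset I_e:_M c$, and since $\fram^{[q]}M\subset I_e$ one bounds the difference by $\lambda_R\bigl(F^e_*(M/(cM+\fram^{[q]}M))\bigr)=O(q^{\delta-1})$. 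But this is exactly the paper's argument transported to submodules of $M$ rather than ideals of $R$; the paper's direction of embedding simply avoids the detour.
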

\begin{proof}
Let $g \coloneqq \rank M$ and consider a short exact sequence of $R$-modules
\[
0 \to L \xrightarrow{\subset} M \to M/L \to 0,
\]
where $L$ is free of rank $g$ and $T\coloneqq M/L$ is torsion; \ie $\Ann_R T \neq 0$. Applying the exact functor $F_*^e$ followed by the left exact functor $\Hom_R(-,R)$ yields an exact sequence
\[
0 \to M^{\flat}_e \overset{\iota}{\longrightarrow} L^{\flat}_e,
\]
as $\Hom_R(F_*^eT, R) = 0$ (for $R$ is a domain and $F_*^eT$ is torsion). The map $\iota$ is none other than restriction. Thus, a map $\vartheta \: F^e_* M \to R$ is determined by its values at $F^e_* L \subset F^e_* M$. Let us think of $\iota$ as an inclusion $M^{\flat}_e \subset L^{\flat}_e$ by realizing $M^{\flat}_e$ inside $L^{\flat}_e$ as the maps $\vartheta \: F^e_* L \to R$ admitting a (necessarily unique) extension to a map $F^e_* M \to R$. Note that $\iota$ respects nonsurjectivity, \ie $\big( M^{\flat}_e \big)^{\textnormal{ns}} \subset \big( L^{\flat}_e \big)^{\textnormal{ns}}$.
For a nonzero $ c \in \Ann_ R T$, we have $L^{\flat}_e \cdot c \subset M^{\flat}_e \subset L^{\flat}_e$ and we readily see that it restricts to
\begin{equation} \label{eqn.GenericContainmentAsterisk}
L^{\natural}_e \cdot c \subset M^{\natural}_e \subset L^{\natural}_e.
\end{equation}
Consequently, one may conclude as usual (using the well-known argument to experts):
\begin{align*}
a_e(M^{\natural}) &= \lambda_R \big(F^e_* L\bigm/\{F^e_* l \mid \vartheta(F^e_* l) \in \fram \text{ for all } \vartheta \in M^{\natural}_e \}\big) \\
&= \sum_{i=1}^g \lambda_R \big(F^e_* R \bigm/\{F^e_* r \mid (\vartheta \circ \sigma_i)(F^e_* r) \in \fram \text{ for all } \vartheta \in M^{\natural}_e \}\big)
\end{align*}
where $\sigma_i \: F^e_* R \to F^e_* L$ are the direct sum structural maps. Consider the ideals\footnote{It is worth noting that $J_{i,e}$ is an ideal because $\vartheta(\sigma(F_\ast^e rs)) = \vartheta(F_\ast^e r \cdot \sigma_i(F_\ast^es)) = ((\vartheta \cdot r) \circ \sigma_i)(F^e_*s)$.} $J_{i,e} \coloneqq \{ r \mid (\vartheta \circ \sigma_i)(F^e_* r) \in \fram \text{ for all } \vartheta \in M^{\natural}_e \} \subset R$ and $I_e \coloneqq \{r \mid \varphi(F^e_* r) \in \fram \text{ for all } \varphi \in \sC_e \} \subset R$. Then \autoref{eqn.GenericContainmentAsterisk} implies $I_e \subset J_{i,e} \subset I_e : c$ for all $i=1,\ldots ,g$. One then argues as in \eg \cite[Lemma 4.17]{BlickleSchwedeTuckerFSigPairs1}, \cite[Lemma 2.7]{CarvajalSchwedeTuckerEtaleFundFsignature}, \cite[Theorem 4.11]{TuckerFSigExists}.
\end{proof}

\subsection{Transformation rule for $F$-signature} The desired formula is the following.

\begin{theorem} \label{thm.MainTheorem}
Let $\theta\: (R,\fram, \kay) \to (S, \fran, \el)$ be a finite local extension defining a cover $f\: \Spec S \to \Spec R$ and $\sC$ be a Cartier $R$-algebra. Suppose that $R$ is an integral domain with field of fractions $K$, set $L \coloneqq S \otimes_R K$, and write $[L:K] \coloneqq \dim_K L$. Suppose that there is a \emph{generic} isomorphism $\sigma_R\: S \to \omega_{S/R}$ of $S$-modules such that $T\coloneqq\sigma_R(1)$ is surjective and $T(\fran) \subset \fram$. If $R$ is a $T$-transposable Cartier $\sC$-module, then
\[
[\el:\kay]\cdot s(S,f^* \sC ) = [L:K] \cdot s(R,\sC).
\]
In particular, $(R,\sC)$ is $F$-regular if and only if so is $(S,f^* \sC)$.
\end{theorem}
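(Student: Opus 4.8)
The plan is to reduce the statement to the combinatorial relation between splitting numbers, via the functor $M \mapsto M^\natural$ developed in \autoref{thm.TuckerGeneralized}. First I would observe that since $R$ is a $T$-transposable Cartier $\sC$-module, by \autoref{rem.Interpreation} (specialized to $D=0$, keeping in mind that we only have $\mathbf{S}_1$ here so we use $\sigma_R$ in place of $\varsigma_R$) the map $\sigma_R\colon S \to \omega_{S/R}$ is a morphism of Cartier $f^*\sC$-modules and, by \autoref{prop.SigmaRisoOnTestModules} or directly, it realizes $(S, f^*\sC)$ and $(\omega_{S/R}, \varphi^!)$ as generically-equal Cartier modules. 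The key object is then $\Hom_R(F^e_* S, R) = (S)^\flat_e$ together with its Cartier-module refinement $S^{\natural, f^*\sC}$. Using that $T\colon S \to R$ is surjective (so $T$ generates $\omega_{S/R} = \Hom_R(S,R)$ as an $S$-module) together with Grothendieck duality as in \autoref{re,.GroDualityII}, I would identify, in each degree $e$, the $R$-module of partial splittings $\sC_{e}$-compatibly: the point is that a map $F^e_* S \to S$ in $(f^*\sC)_e$, post-composed with $T$, lands in $S^{\natural}_e$, and conversely Grothendieck duality recovers an $S$-linear map from an $R$-linear one. The precise bookkeeping is that $\asr\big(S^{\natural, f^*\sC}\big)$ computes $[L:K]\cdot s(R,\sC)$ by \autoref{thm.TuckerGeneralized} applied to $M = S$ over $R$ (since $\rank_R S = [L:K]$).

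Next I would relate $\asr\big(S^{\natural, f^*\sC}\big)$ to $s(S, f^*\sC)$. Here the residue-field index $[\el:\kay]$ enters: the splitting numbers $a_e(S, f^*\sC)$ are lengths over $S$, while the splitting numbers $a_e(S^{\natural})$ entering $\asr$ are lengths over $R$; a colength over $S$ converts to a colength over $R$ by multiplying by $[\el:\kay]$ (as in the standard comparison, \cite[Lemma 3.6]{BlickleSchwedeTuckerFSigPairs1}), while the $F$-signature normalizations differ by the corresponding $\delta$-shift. The condition $T(\fran)\subset \fram$ is exactly what guarantees that $T$ carries non-surjective maps to non-surjective maps and vice versa, so that $\sC_e^{\mathrm{ns}}$ on the $S$-side matches $(S^{\natural}_e)^{\mathrm{ns}}$ on the $R$-side under the identification above; this is the mechanism by which ``$T$-transposable'' plus ``$T(\fran)\subset\fram$'' makes the two splitting-number sequences agree up to the factor $[\el:\kay]$. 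Combining the two computations yields
\[
[\el:\kay]\cdot s(S, f^*\sC) \;=\; \asr\big(S^{\natural, f^*\sC}\big) \;=\; [L:K]\cdot s(R,\sC),
\]
which is the desired formula. The final ``in particular'' follows since $s>0$ detects $F$-regularity (\cite[\S 3]{BlickleSchwedeTuckerFSigPairs1}): as $[\el:\kay]$ and $[L:K]$ are positive integers, $s(S,f^*\sC)>0 \iff s(R,\sC)>0$.

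The main obstacle I anticipate is the careful matching of the two sets of splitting numbers: namely showing that, under the $S$-linear Grothendieck-duality identification $\Hom_S(F^e_* S, S) \cong \Hom_R(F^e_* S, R)$ induced by $T$, the image of $(f^*\sC)_e$ lands exactly (not just up to a regular element — we want an honest equality of length computations in the limit) inside the $R$-span defining $S^{\natural}_e$, and that the ``non-surjective'' submodules correspond. One subtlety is that $\eta_M$ in the definition of $S^\natural$ need not be injective, so one works with colengths rather than with the modules themselves; another is that $\sigma_R$ is only a \emph{generic} isomorphism, so one must pass to a regular element $c$ (as in \autoref{lem.RecoveringAllStuuf} and the proof of \autoref{thm.TuckerGeneralized}, equation \autoref{eqn.GenericContainmentAsterisk}) to sandwich the relevant modules and invoke that squeezing colengths between $I_e$ and $I_e:c$ does not affect the normalized limit. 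Once that sandwiching is in place, the argument is the ``well-known to experts'' limit computation of \cite[Lemma 4.17]{BlickleSchwedeTuckerFSigPairs1}, \cite[Lemma 2.7]{CarvajalSchwedeTuckerEtaleFundFsignature}.
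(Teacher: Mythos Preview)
Your strategy is essentially the paper's own proof: identify $(f^*\sC)_e$ with $(f_*S)^{\natural,\sC}_e$ via the Grothendieck duality map $\psi \mapsto T \circ \psi$, check that surjectivity/non-surjectivity is preserved using $T$ surjective together with $T(\fran)\subset\fram$, convert $S$-lengths to $R$-lengths by the factor $[\el:\kay]$, apply \autoref{thm.TuckerGeneralized} with $M=f_*S$, and handle the merely-generic isomorphism by sandwiching with a regular element $c$ as in \autoref{eqn.GenericContainmentAsterisk}.

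Two small imprecisions to fix. First, the object you want is $(f_*S)^{\natural,\sC}$, not ``$S^{\natural,f^*\sC}$'': the $\natural$-construction uses a Cartier subalgebra of $\sC_R$, and here the relevant algebra is $\sC$ itself acting on the $R$-module $f_*S$. Second, the parenthetical ``$T$ is surjective (so $T$ generates $\omega_{S/R}$)'' is false --- surjectivity of $T\colon S\to R$ says nothing about $\sigma_R\colon S\to\omega_{S/R}$ being surjective; indeed you correctly note later that $\sigma_R$ is only a generic isomorphism, which is precisely why the regular-element sandwich is needed.
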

\begin{proof}
Note that $\theta$ is an extension of $\mathbf{S}_1$ rings. Let $d= \dim R =\dim S$ and $\delta = d+\log_p[\kay^{1/p}:\kay]= d +\log_p [\el^{1/p}: \el]$. Note that:
\[
[\el:\kay] \cdot a_e(f^*\sC)=[\el:\kay] \cdot \lambda_S\big(f^*\sC_e \big/ (f^*\sC_e)^{\textnormal{ns}}\big) = \lambda_R \Big( f_*\big(f^*\sC_e\big/(f^*\sC_e)^{\textnormal{ns}} \big) \Big).
\]

\begin{claim} \label{cla.KeyClaim} If $\sigma_R$ is an isomorphism, the isomorphism given by \autoref{pro.GroDuality} (\cf \autoref{rem.RemarkRelGor}) $ \xi = \xi(R,F^e_* S) \: f_*\Hom_S(F_*^e S, S) \to \Hom_R(F_*^e f_*S, R)$, $\psi \mapsto T \circ f_*\psi$, induces an isomorphism
\[
f_*\big(f^*\sC_e \big/ (f^*\sC_e)^{\textnormal{ns}} \big)  \to  (f_* S)_e^{\natural} \big/\bigl( (f_* S)_e^{\natural} \bigr)^{\textnormal{ns}}.
\]
\end{claim}
\begin{proof}[Proof of claim] 
We must prove the equality $\xi\big(f_*f^* \sC_e\big) = (f_*S)^{\natural}_e $ and that $\psi$ is surjective if and only if so is $\xi(\psi)$. Let us recall that an element of $f^* \sC_e$ is a finite sum $\sum_i {\varphi^{\top}_i \cdot s_i}$ where all $\varphi_i$ are in $\sC_e$; see \autoref{rem.RemarkRelGor}. Thus, for such element, we have:
\[
\xi \bigg( \sum_i {\varphi_i^{\top} \cdot s_i} \bigg) = \sum_i T \circ \big(\varphi^{\top}_i \cdot s_i\big) = \sum_i \varphi_i \circ F_*^e (s_i \cdot T) \in (f_* S)^{\natural}_e.
\]
In other words, $\xi\big(f_*f^* \sC_e\big) \subset (f_* S)^{\natural}_e$. Conversely, on the right-hand side of the above equality, we hit all the elements of $(f_* S)^{\natural}_e$ as any $\rho \in \Hom_R(S,R)$ is of the form $s\cdot T$ for some $s \in S$, for $\sigma_R$ is an isomorphism. The equivalence between the surjectivity of $\psi$ and $\xi(\psi)=T \circ \psi$ follows from the remaining two hypothesis on $T$. Indeed, since $T$ is surjective, then $\xi(\psi)$ is surjective if so is $\psi$. Conversely, suppose $\psi$ is not surjective, meaning that it maps $F_*^e S$ into $\fran$. Then, since $T(\fran) \subset \fram$, we have that $\xi(\psi)$ maps $F_*^e S$ into $\fram$. This proves the claim.
\end{proof}

If $\sigma_R$ were an isomorphism, we could use \autoref{cla.KeyClaim} directly to write
\[
[\el:\kay] \cdot a_e(f^*\sC) = \lambda_R \Big( f_*\big(f^*\sC_e\big/(f^*\sC_e)^{\textnormal{ns}} \big) \Big) = \lambda_R \Big( (f_* S)^{\natural}_e \big/ \big( (f_* S)^{\natural}_e\big)^{\textnormal{ns}} \Big) = a_e\big((f_* S)^{\natural}\big).
\]
Dividing by $q^{\delta}$, letting $e\rightarrow \infty$, and using \autoref{thm.TuckerGeneralized} yields desired result. We are, however, assuming $\sigma_R$ to be an isomorphism only generically and so \autoref{cla.KeyClaim} and its proof still hold generically. Let $c \in S$ be a regular element in $\Ann_S \coker \sigma_R$. Thus, $\xi$ still sends $f_*f^* \sC_e$ injectively into $(f_* S)^{\natural}_e$ preserving surjectivity/nonsurjectivity in the process and
\[
(f_* S)^{\natural}_e\cdot c\subset \xi\big(f_*f^* \sC_e\big) \subset (f_* S)_e^{\natural}.
\]
As in the proof of \autoref{thm.TuckerGeneralized}, this suffices to conclude that 
\[\lambda_R\Big(\xi\big(f_*f^* \sC_e\big) \big/ \xi\big(f_*f^* \sC_e\big)^{\mathrm{ns}} \Big) \Big/q^{\delta} \text{ and } a_e\big((f_* S)^{\natural}\big)/q^{\delta}\] 
have the same limit as $e$ goes to $\infty$, which is all we need to conclude as before. 
\end{proof}

\begin{remark} \label{rem.Comparison}
Using \autoref{pro.RecoveringAllStuff}, we see that \autoref{thm.MainTheorem} recovers the transformation rules in \cite{CarvajalSchwedeTuckerEtaleFundFsignature,CarvajalFiniteTorsors}.
\end{remark}

\begin{remark}
If $\sigma_R$ is an isomorphism in \autoref{thm.MainTheorem}, $[\el: \kay]$ is the free rank of $S$ as an $R$-module: $\textnormal{frk}_R f_*S =\lambda_R\bigl( \Hom_R(S,R)\big/ \Hom_R(S, \fram)\bigr) = \lambda_R(S \cdot T/\fran \cdot T) = \lambda_R(\el) = [\el : \kay]$.
\end{remark}

\begin{example} \label{ex.NecessityHypothesisTransRule} The following examples show why the hypothesis in \autoref{thm.MainTheorem} about $T$ are necessary; see \cite[Example 3.15]{Carvajalphdthesis} for further details.
\begin{enumerate}[(a)]
\item{For the surjectivity of $T$, consider \cite[Example 7.12]{SchwedeTuckerTestIdealFiniteMaps} of a
a quasi-\'etale degree-$2$ extension $R\subset S=\mathbb{F}_2 \llbracket u, v\rrbracket$ of $2$-dimensional $\mathbb{F}_2$-algebras such that $\Tr_{S/R}$ is not surjective. In fact, $R$ is a log terminal singularity that is $F$-pure but not strongly $F$-regular. $R$ is the ring of invariants of $S$ under certain non-linear action of $\mathbb{Z}/2\mathbb{Z}$ due to M.~Artin \cite{ArtinWildlyRamifiedZ2Actions}.}
\item{For $T(\fran) \subset \fram$, consider any Noether normalization $R \subset S$ of a singular Gorenstein local ring $S$ such that $\Char \kay \nmid [\sK(S):\sK(R)]$. For instance, take $R = \kay \bigl\llbracket x^2, y^2 \bigr\rrbracket \subset \kay \bigl\llbracket x^2, xy, y^2 \bigr\rrbracket = S $ with $\Char \kay \neq 2$. Then, $\omega_{S/R} \cong S$, say with free generator $T$. Then $T$ is surjective as $R \subset S$ splits. However, \autoref{thm.MainTheorem} fails as otherwise it would imply $s(S) \geq s(R)=1$ (but $S$ is singular). A free basis for $\kay\bigl \llbracket x^2, y^2\bigr \rrbracket \subset \kay\bigl \llbracket x^2,xy,y^2\bigr\rrbracket$ is $1, xy$ and $T$ may be taken to be $(xy)^{\vee}$. Hence, $T(\fran) \not\subset \fram$ as $T(xy)=1$.}
\end{enumerate}
\end{example}

\subsection{Splitting primes and splitting ratios under finite covers}
Since splitting ratios are $F$-signatures of Cartier algebras \cite{BlickleSchwedeTuckerFSigPairs1}, it is natural to expect them to satisfy transformation rules. In this regard, we have the following.
\begin{theorem} \label{thm.TransRuleSplittingRatios}
With hypothesis as in \autoref{thm.MainTheorem}, the contraction of $\sp(S,f^*\sC)$ along $\theta$, \ie $\sp(S,f^* \sC )\cap R$, is $\sp(R,\sC)$. 
Furthermore:
\[
\big[\kappa(\fran):\kappa(\fram)\big] \cdot r\big(S,f^* \sC\big) = \big[\kappa\big(\sp(S,f^* \sC)\big):\kappa\big(\sp(R,\sC)\big)\big] \cdot r\big(R,\sC\big),
\]
where $\kappa(-)$ denotes the residue field at the respective prime ideal. In particular, $(R,\sC)$ is $F$-pure (resp. strongly $F$-regular) if and only if $(S,f^*\sC)$ is so.
\end{theorem}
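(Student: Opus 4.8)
The plan is to reduce the statement to \autoref{thm.MainTheorem} by passing to the quotient by the splitting primes. First I would establish the claim $\sp(S,f^*\sC) \cap R = \sp(R,\sC)$. Recall from the preliminaries that $\sp(R,\sC) \subset R$ is the largest proper Cartier $\sC$-submodule of $R$ (and similarly for $S$). Since $R$ is a $T$-transposable Cartier $\sC$-module, the embedding $\sigma_R \colon S \to \omega_{S/R}$ upgrades, via the transpose construction of \autoref{def.transposability}, to a morphism of Cartier modules, and the trace map $T = \Tr_R \circ \sigma_R \colon f_* S \to R$ is compatible with the Cartier actions (this is the content of \autoref{eqn.TransposabilityEquation} with $s' = 1$, giving $\varphi \circ F^e_* T = T \circ \varphi^{\top}$ for all $\varphi \in \sC$). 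Using surjectivity of $T$ and the hypothesis $T(\fran) \subset \fram$, one checks: if $r \in R$ satisfies $\kappa \cdot r \in \fram$ for all $\kappa \in \sC_e$, all $e$, then for $\kappa^{\top} \in f^*\sC_e$ we have $\kappa^{\top} \cdot r = \kappa^{\top}(F^e_* r)$, and applying $T$ gives $\kappa(F^e_* T(r^{?}\cdot)) \in \fram$; more directly, $\sp(R,\sC) \cdot S$ is visibly contained in a proper Cartier $f^*\sC$-submodule of $S$ (its radical, or its $\mathbf S_2$-ification), whence $\sp(R,\sC) \subset \sp(S,f^*\sC) \cap R$. For the reverse inclusion, if $r \in \sp(S,f^*\sC)\cap R$ then $r \in \fran$, and since every element of $\sC_e$ arises as $\xi(\psi) = T\circ \psi$ for $\psi \in f^*\sC_e$ (by the surjectivity of $\xi$ restricted to $f^*\sC_e \to \sC_e$, as in \autoref{cla.KeyClaim}), applying $T$ and using $T(\fran)\subset\fram$ shows $\kappa \cdot r \in \fram$ for all $\kappa \in \sC_e$, so $r \in \sp(R,\sC)$.

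Next I would set $\bar R \coloneqq R/\sp(R,\sC)$ and $\bar S \coloneqq S/\sp(S,f^*\sC)$. By the first part these fit into a finite local extension $\bar\phi \colon (\bar R, \bar\fram, \bar\kay) \to (\bar S, \bar\fran, \bar\el)$, where $\bar\kay = \kappa(\sp(R,\sC))$ and $\bar\el = \kappa(\sp(S,f^*\sC))$, and $\sC$ descends to a Cartier $\bar R$-algebra $\bar\sC$ (and $f^*\sC$ to $\overline{f^*\sC}$) as recalled in the preliminaries. The key point is that $\overline{f^*\sC}$ is canonically $\bar f^*\bar\sC$ where $\bar f \colon \Spec \bar S \to \Spec \bar R$: this follows because the transpose operation $\varphi \mapsto \varphi^{\top}$ and the trace $T$ are compatible with reduction mod the splitting primes, using $T(\sp(S,f^*\sC)) \subset \sp(R,\sC)$ (a consequence of $T(\fran) \subset \fram$ together with the first part and the description $\sp = \{r : \kappa\cdot r \in \fram\}$). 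One also needs that $\bar R$ is a $\bar T$-transposable Cartier $\bar\sC$-module, where $\bar T$ is the induced section; here I would invoke the transposability criterion \autoref{thm.TransposabilityCriterion} (or argue directly that $\varphi^\top$ descends). Then by definition $r(R,\sC) = s(\bar R, \bar\sC)$ and $r(S,f^*\sC) = s(\bar S, \overline{f^*\sC}) = s(\bar S, \bar f^*\bar\sC)$, and \autoref{thm.MainTheorem} applied to $\bar\phi$ gives
\[
[\bar\el : \bar\kay] \cdot s(\bar S, \bar f^*\bar\sC) = [\bar L : \bar K] \cdot s(\bar R, \bar\sC),
\]
where $\bar K, \bar L$ are the fraction fields of $\bar R, \bar S$. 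It remains to identify $[\bar L:\bar K]$ with $[\kappa(\fran):\kappa(\fram)]$: generically $f$ is flat of rank $[L:K]$, and since $\sigma_R$ is a generic isomorphism, the splitting primes agree with the zero ideal generically only if... — more precisely, localizing at the generic point of $\Spec\bar R$, the extension $\bar L / \bar K$ is the generic fiber of $\bar f$, and one shows its degree equals $[\kappa(\fran):\kappa(\fram)]$ by a length computation analogous to the \enm{\textnormal{frk}} remark following \autoref{thm.MainTheorem}, using that $\bar T$ is generically an isomorphism and $\bar T(\bar\fran)\subset\bar\fram$.

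Finally, the "in particular" clause: $(R,\sC)$ is $F$-pure iff $\sp(R,\sC)$ is proper iff (by the first part, since $\fran$ lies over $\fram$ and $T$ is surjective) $\sp(S,f^*\sC)$ is proper iff $(S,f^*\sC)$ is $F$-pure; and $(R,\sC)$ is strongly $F$-regular iff $r(R,\sC) = 1$ with $\sp(R,\sC) = 0$, which by the displayed formula and the first part is equivalent to the same for $(S,f^*\sC)$ — alternatively this also follows from \autoref{thm.MainTheorem} directly once $F$-purity is known. The main obstacle I anticipate is the compatibility of the whole transposability package with reduction modulo the splitting primes — specifically verifying that $\overline{f^*\sC} = \bar f^* \bar\sC$ and that $\bar R$ remains $\bar T$-transposable — since this requires checking that $\varphi \mapsto \varphi^\top$ descends to the quotient, which is where the hypothesis $T(\fran)\subset\fram$ (ensuring $T$ maps $\sp(S,f^*\sC)$ into $\sp(R,\sC)$) is doing essential work; the residue-field degree bookkeeping for $[\bar L : \bar K]$ is the secondary technical point.
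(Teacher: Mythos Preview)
Your overall strategy matches the paper's: establish the splitting-prime contraction, pass to the quotients $\bar R = R/\sp(R,\sC)$ and $\bar S = S/\sp(S,f^*\sC)$, verify the hypotheses of \autoref{thm.MainTheorem} for the induced cover $\bar f$, and apply it. However, there are genuine gaps.

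\textbf{The main error is in the final bookkeeping.} You write ``it remains to identify $[\bar L:\bar K]$ with $[\kappa(\fran):\kappa(\fram)]$''. This is backwards, and the identification you propose is false in general. The residue fields of $(\bar R,\bar\fram)$ and $(\bar S,\bar\fran)$ are still $\kay=\kappa(\fram)$ and $\el=\kappa(\fran)$ (quotienting by an ideal contained in the maximal ideal does not change the residue field), while the \emph{fraction} fields of the domains $\bar R$, $\bar S$ are $\bar K = \kappa\big(\sp(R,\sC)\big)$ and $\bar L = \kappa\big(\sp(S,f^*\sC)\big)$. Hence \autoref{thm.MainTheorem} applied to $\bar f$ reads exactly
\[
[\kappa(\fran):\kappa(\fram)]\cdot s(\bar S,\bar f^*\bar\sC)=\big[\kappa\big(\sp(S,f^*\sC)\big):\kappa\big(\sp(R,\sC)\big)\big]\cdot s(\bar R,\bar\sC),
\]
which is the desired formula with no further work.

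\textbf{Secondary issues.} Your sketch for the contraction $\sp(S,f^*\sC)\cap R=\sp(R,\sC)$ does not go through as written. For the direction you phrase as ``every element of $\sC_e$ arises as $\xi(\psi)=T\circ\psi$'': the map $\psi\mapsto T\circ\psi$ lands in $\Hom_R(F^e_*S,R)$, not in $\sC_{e,R}$, and restricting $\varphi^\top$ to $F^e_*R$ need not recover $\varphi$ (that holds only when $T$ commutes with Frobenius, which is not assumed). The paper instead fixes a unit $v\in S$ with $T(v)=1$ (such $v$ exists precisely because $T$ is surjective and $T(\fran)\subset\fram$) and runs both directions through \autoref{eqn.TransposabilityEquation} directly. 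For $\bar T$-transposability of $\bar R$, invoking \autoref{thm.TransposabilityCriterion} is not legitimate: that criterion requires $\mathbf{G}_1+\mathbf{S}_2$ and $(!)$ on the quotients, over which you have no control. The paper instead reduces \autoref{eqn.TransposabilityEquation} modulo $\q$ to obtain $\overline{\varphi}^{\top}=\overline{\varphi^{\top}}$, which simultaneously gives $\bar f^*\bar\sC=\overline{f^*\sC}$. Finally, you must also check that $\bar\sigma\colon\bar S\to\omega_{\bar S/\bar R}$ is injective and generically an isomorphism; the paper handles injectivity by a short argument using \autoref{eqn.TransposabilityEquation} and the unit $v$, and generic surjectivity from $T(S)\not\subset\p$ (which uses $F$-purity of $(R,\sC)$).
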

\begin{proof}
We show first $\sp\big(S,f^* \sC\big) \cap R = \sp(R,\sC)$.
We must prove that for $r\in R$, the existence of $\varphi \in \sC_e$ such that $\varphi(F^e_* r) =1$ is equivalent to the existence of $\psi \in f^*\sC_e$ such that $\psi(F^e_* r) = 1$.

Let $v \in S$ be a unit such that $T(v)=1$, which exists because $T$ is surjective and $T(\fran) \subset \fram$. Let $\varphi \in \sC_e$ such that $\varphi(F^e_* r) = 1$, we claim that $\varphi^{\top}\cdot v$ maps $F^e_*r$ to a unit in $S$. Indeed, if $\varphi^{\top}(F_*^e v \cdot r) \in \fran$, then $T\big((\varphi^{\top}(F_*^e v \cdot r ) \big) \in T(\fran) \subset \fram$. However, 
\[
T\big(\varphi^{\top}(F_*^e v \cdot r) \big) = \varphi\big( F_*^e T(v\cdot r) \big) =\varphi\big( F_*^e r \big) =1.
\]
Conversely, say there is $\psi \in f^*\sC_e$ such that $\psi(F^e_* r)=1$, so $\psi(F^e_* v^q\cdot r)=v$. Since $\psi$ is a sum of elements of the form $\varphi^{\top}\cdot s$, we may assume $\psi = \varphi^{\top} \cdot s$ with $\varphi \in \sC_e$, $s\in S$. Thus,
\[
\varphi^{\top}(F^e_* sv^q \cdot r)=v.
\]
Hitting this equality with $T$ and using $T\circ \varphi^{\top} = \varphi \circ F_*^e T$ gives:
\[
\varphi\big(F^e_* T(sv^q\cdot r)\big)=\varphi\big(F^e_* T(sv^q)\cdot r\big)=1.
\]
In other words, $\varphi \cdot T(sv^q)$ sends $F_*^e r$ to $1$. This shows $\sp\big(S,f^* \sC\big) \cap R = \sp(R,\sC)$.

In this manner, to prove the transformation rule, we may assume $(R,\sC)$ and therefore $(S,f^*\sC)$ are $F$-pure as otherwise the transformation rule is trivially true ($0=0$). Observe that $R \subset S$ restricts to a local inclusion of domains
\begin{equation} \label{eqn.LocalQUotient}
R\big/\sp\big(R,\sC\big) \subset S\big/\sp\big(S,f^* \sC\big),
\end{equation}
with corresponding morphism of schemes denoted by $\overline{f}$. Further, let us set $\p \coloneqq \sp (R,\sC )$, $\q \coloneqq \sp (S,f^* \sC )$, $\overline{R} \coloneqq R/\p$, and $\overline{S} \coloneqq S/\q$. In order to apply the transformation rule in \autoref{thm.MainTheorem} to \autoref{eqn.LocalQUotient}, we show that $\overline{f}$ inherits the properties of $f$. To this end, we note that
\begin{equation} \label{eqn.TmapsSplittingPrimesToSplittingPrimes}
T(\q) \subset \p,
\end{equation}
as $\varphi\big(F_*^e T(s)\big) = T\big(\varphi^{\top}(F_*^e s)\big) \in T(\fran) \subset \fram$ for all $\varphi \in \sC_e$ if $s\in \q = \sp (S, f^*\sC)$.
In other words, $T$ restricts to a unique map $\overline{T} \in \Hom_{\overline{R}}\big(\overline{S}, \overline{R}\big)$
such that the square
\[
\xymatrix@C=3em{
S  \ar[r]^-{ T } \ar@{->>}[d]_-{} & R \ar@{->>}[d]^-{}\\ 
  \overline{S} \ar[r]^-{\overline{T}} & \overline{R}
}
\]
is commutative. The same holds for any $S$-multiple of $T$, so that we have an $\overline{S}$-linear map
\[
\overline{\sigma}\:\overline{S} \to \Hom_{\overline{R}}\big(\overline{S}, \overline{R}\big) = \omega_{\overline{S}\big/\overline{R}}, \quad \overline{s} \mapsto \overline{T}\cdot \overline{s} = \overline{T\cdot s}.
\]
\begin{claim} \label{cla.Inheritingroperties}
$\overline{\sigma}$ is injective and generically an isomorphism. $\overline{T}$ is surjective and $\overline{T}(\overline{\fran}) \subset \overline{\fram}$
\end{claim}
\begin{proof}[Proof of claim]
The last two statements are clear. For the injectivity of $\overline{\sigma}$, consider the following. If $\overline{T \cdot s} = 0$, then $T(ss')\in \p$ for all $s'\in S$. Equivalently,
\begin{equation}
\label{eqn.TransposeSplittingPrime}
\varphi\big( F_*^e T(ss') \big) = T\big( \varphi^{\top}(s's) \big)  \in \fram,
\end{equation}
for all $\varphi \in \sC_e$. We must conclude that $s \in \q$. Assume to the contrary: there is $\varphi \in \sC_e$ and $s' \in S$ such that $\varphi^\top(s's) \in S \smallsetminus \fran = S^\times$. Rescaling $s'$, we may assume that $\varphi^\top(s's) = v$ which contradicts \autoref{eqn.TransposeSplittingPrime} as $T(v)=1$. Therefore, $\varphi^{\top}(s's) \in \fran$ for all $\varphi \in \sC_e$ and $s' \in S$. Hence $s \in \q$, as required. To show that $\overline{\sigma}$ is generically surjective, it suffices to prove that $\overline{T} \neq 0$, \ie $T(S) \not \subset \p$. This follows from the surjectivity of $T$ and the $F$-purity of $(R,\sC)$.
\end{proof}

\begin{claim} \label{cla.TranspositionRestriction}
$\big(\overline{R}, \overline{\sC}\big)$ is $\overline{T}$-transposable. In fact, $\overline{\varphi}^{\top} = \overline{\varphi^{\top}}$ and in particular $\overline{f}^* \overline{\sC} = \overline{f^* \sC}$.
\end{claim}
\begin{proof}[Proof of claim]
Recall that $\varphi^{\top}$ is the unique map fitting in \autoref{eqn.TransposabilitySimpleEquation} and similarly for $\overline{\varphi}^{\top}$. Reducing \autoref{eqn.TransposabilitySimpleEquation} modulo $\q$ yields that $\overline{\varphi^{\top}}$ satisfies the condition characterizing $\overline{\varphi}^{\top}$.
\end{proof}
Combining \autoref{thm.MainTheorem} with the above gives the desired transformation rule.
\end{proof}

\section{Test modules under finite covers} \label{sec.testmodulefinitemorphism}
We come now to our generalizations of the results in \cite{SchwedeTuckerTestIdealFiniteMaps} (\cf \cite[Theorem D]{MaSchwedeSingularitiesNixedCharacteriticPerfectoidBCM}) of the form $T\big(\uptau(B, \Delta_B)\big) = \uptau(A, \Delta_A)$ for a cover $\Spec B \to  \Spec A$ between normal varieties. We prove a more general version for Cartier modules which says that $\Tr_M \bigl(f_\ast \uptau(f^!M, f^\ast \sC)\bigr) = \uptau(M, \sC)$ for any cover $f$ (this was proved for flat morphisms in \cite[Lemma 4.17]{staeblerunitftestmodules} under an additional technical assumption) and then utilize transposability to obtain more special results yet recovering the classical ones. In subsequent sections, we shall remark the form these results take in the case of canonical modules as well as providing analogous results for non-$F$-pure ideals and test ideals along closed subschemes.

Let us recall some concepts from \cite{BlickleStablerFunctorialTestModules}. Fix a Cartier algebra $\sC$ over a ring $R$ and consider Cartier modules for this fixed algebra.
For a Cartier module $M$, one denotes by $\underline{M} \coloneqq \sC_+^e M$ the stable image of $M$ under $\sC_+$. This exists by \cite[Proposition 2.13]{BlickleTestIdealsViaAlgebras} and one can show that $\underline{M}$ does not admit any nilpotent quotients. In particular, if $N \subset \underline{M}$ is a nil-isomorphism it is an equality. Moreover, the inclusion $\underline{M} \subset M$ is always a nil-isomorphism. Hence, to check that an inclusion $N \subset M$ is a nil-isomorphism, one may equivalently pass to stable images and show that one has an equality $\underline{N} = \underline{M}$. We also note that the operation taking the stable image commutes with localization; see \cite[Lemma 2.11]{BlickleTestIdealsViaAlgebras}. Another useful observation is that $\underline{H^0_\eta(M)} = \underline{i_\ast i^! M}$, where $i: \Spec R/\eta \to \Spec R$ is the natural closed immersion; see \cite[Lemma 3.2]{BlickleStablerFunctorialTestModules}. Moreover, the annihilator of $\underline{M}$ is always a radical ideal. We use freely that $\uptau$ commutes with localization, that is $\uptau(M, \sC)_\eta = \uptau(M_\eta, \sC_\eta)$, where $\sC_\eta = \sC \otimes_R R_\eta$ and $\eta$ is any prime in $R$ \cite[Proposition 1.19 (b)]{BlickleStablerFunctorialTestModules}. We also use that the (underived) local cohomology functor $H^0_\eta$ is a functor of $\sC$-modules (\cf paragraph after \cite[Example 1.4]{BlickleStablerFunctorialTestModules}).

\begin{theorem}
\label{theo.TraceTauSurjective}
Let $f\: \Spec S \to \Spec R$ be a finite cover. Let $\sC$ be a Cartier $R$-algebra and $M$ a Cartier $\sC$-module. Then, 
\begin{equation*}
\Tr_{M}\big(f_\ast \uptau (f^! M, f^*\sC ) \big) = \uptau(M,\sC).
\end{equation*}
In particular, $\Tr_M$ is surjective if $(M,\sC)$ is $F$-regular. Conversely, if $\Tr_M$ is surjective and $(f^! M, f^*\sC)$ is $F$-regular, then $(M,\sC)$ is $F$-regular.
\end{theorem}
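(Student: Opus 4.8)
The plan is to reduce the statement to the interaction between the Grothendieck trace map $\Tr_M : f_*f^!M \to M$ and the test module functor, using Grothendieck duality (\autoref{pro.GroDuality}) as the organizing principle. First I would establish the inclusion $\Tr_M\big(f_*\uptau(f^!M,f^*\sC)\big) \subset \uptau(M,\sC)$. Since $\uptau(f^!M,f^*\sC) \subset f^!M$ is a Cartier $f^*\sC$-submodule, its image under $\Tr_M$ is a Cartier $\sC$-submodule of $M$ by \autoref{rem.RemarkRelGor}(1). So it suffices to check that this image is a test submodule, i.e. that $H^0_\eta\big(\Tr_M(f_*\uptau(f^!M,f^*\sC))_\eta\big) \to H^0_\eta(M_\eta)$ is a nil-isomorphism for every $\eta \in \Ass M$. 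Here I would localize at $\eta$ and exploit that $H^0_\eta$ is a functor of Cartier modules together with the compatibility of the trace with localization; over the complete local ring one knows $\Tr$ is surjective on the local cohomology of $\omega$-type modules (or: the cokernel of $\Tr_M$ is a nil module since $M/\Tr_M(f_*f^!M)$ is generically zero, using that $f$ is a cover so $\Tr_{\omega}$ is generically surjective as noted in \autoref{rem.Degeneracy}). Combining this with the fact that $\uptau(f^!M,f^*\sC) \subset f^!M$ is itself a nil-isomorphism on local cohomology at points over $\eta$ gives the inclusion.

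For the reverse inclusion $\uptau(M,\sC) \subset \Tr_M\big(f_*\uptau(f^!M,f^*\sC)\big)$, the idea is to use minimality of $\uptau(M,\sC)$: it suffices to show that $N \coloneqq \Tr_M\big(f_*\uptau(f^!M,f^*\sC)\big)$ is a Cartier $\sC$-submodule of $M$ for which $H^0_\eta(N_\eta) \to H^0_\eta(M_\eta)$ is a nil-isomorphism for all $\eta \in \Ass M$; then $\uptau(M,\sC) \subset N$ automatically. We already know $N$ is a Cartier submodule, and the nil-isomorphism property on local cohomology was what the first half was designed to produce (the surjectivity up to nil, plus the fact that $f^!$ carries associated primes of $M$ to associated primes of $f^!M$ lying over them, and that $\uptau$ of $f^!M$ is a nil-isomorphism there). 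So the two inclusions come from the same local computation read in opposite directions, with minimality used on $\uptau(M,\sC)$ for ``$\subset$'' and on $\uptau(f^!M,f^*\sC)$ for ``$\supset$.''

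The two corollaries are then formal. If $(M,\sC)$ is $F$-regular, then $\uptau(M,\sC) = M$, so $\Tr_M\big(f_*\uptau(f^!M,f^*\sC)\big) = M$, which forces $\Tr_M : f_*f^!M \to M$ to be surjective. Conversely, if $\Tr_M$ is surjective and $(f^!M,f^*\sC)$ is $F$-regular, then $\uptau(f^!M,f^*\sC) = f^!M$, whence $\uptau(M,\sC) = \Tr_M(f_*f^!M) = M$, so $(M,\sC)$ is $F$-regular.

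\textbf{Main obstacle.} The delicate point is the local cohomology bookkeeping at associated primes: one must match up $\Ass M$ with the relevant primes of $S$ lying over them, verify that $f^!$ and the trace behave compatibly with the functors $H^0_\eta$ and $\underline{(-)}$ (stable image), and argue that the cokernel of $\Tr_M$ contributes only a nil module at each such point. Concretely, the heart is showing $\underline{H^0_\eta(M_\eta)} = \underline{H^0_\eta\big(\Tr_M(f_*\uptau(f^!M,f^*\sC))_\eta\big)}$, for which I would pass to stable images (these commute with localization and with the closed-immersion description $\underline{H^0_\eta(M)} = \underline{i_* i^! M}$ recalled before the theorem) and reduce to the already-available generic surjectivity of the trace together with $F$-regularity of $f^!M$ upstairs. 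Everything else is a diagram chase through Grothendieck duality.
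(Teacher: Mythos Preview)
Your proposal has the two inclusions swapped, and this is not merely a labeling issue---it leaves the inclusion $\Tr_M\big(f_*\uptau(f^!M,f^*\sC)\big) \subset \uptau(M,\sC)$ unproved. In your first paragraph you aim to show $N \coloneqq \Tr_M\big(f_*\uptau(f^!M,f^*\sC)\big) \subset \uptau(M,\sC)$ and say ``it suffices to check that this image is a test submodule, i.e.\ that $H^0_\eta(N_\eta) \to H^0_\eta(M_\eta)$ is a nil-isomorphism.'' But by definition $\uptau(M,\sC)$ is the \emph{smallest} Cartier submodule with that nil-isomorphism property, so verifying that property for $N$ gives $\uptau(M,\sC) \subset N$, the opposite direction. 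Your second paragraph then repeats essentially the same argument for what is genuinely the $\supset$ inclusion, and your closing sentence (``minimality used on $\uptau(M,\sC)$ for `$\subset$' and on $\uptau(f^!M,f^*\sC)$ for `$\supset$' '') has the roles reversed.

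The paper handles the $\subset$ inclusion differently and in one line: it invokes the natural inclusion $\uptau \circ f^! \hookrightarrow f^! \circ \uptau$ from \cite[Proposition 6.13]{BlickleStablerFunctorialTestModules}, so that $\uptau(f^!M,f^*\sC) \subset \Hom_R(S,\uptau(M,\sC))$, and then applies $\Tr_M$. This inclusion \emph{is} a minimality argument, but for $\uptau(f^!M,f^*\sC)$ upstairs: one checks that $f^!\uptau(M,\sC) \subset f^!M$ is an $f^*\sC$-submodule with the nil-isomorphism property at every $\nu \in \Ass f^!M = f^{-1}(\Ass M)$. You never articulate this step. For the $\supset$ inclusion, your identification of the ``main obstacle'' is accurate and matches the paper's approach: the real work is establishing $\underline{H^0_\eta(M)}_\eta = \Tr_{M_\eta}\big(\underline{H^0_{\eta S}(f^!M)}_\eta\big)$, which the paper does by an explicit construction (choosing a basis of $(S/\eta S)_\eta$ over the field $R_\eta/\eta$ containing $1$ and lifting elements by hand), together with a decomposition over the fiber $f^{-1}(\eta)$ using $\Ass f^!M = f^{-1}(\Ass M)$. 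Your sketch here (``cokernel of $\Tr_M$ is a nil module since generically zero'') gestures at the right phenomenon but does not supply the construction.
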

\begin{proof}
By \cite[Proposition 6.13]{BlickleStablerFunctorialTestModules}, there is a natural inclusion $\uptau \circ f^{!} \hookrightarrow f^{!} \circ \uptau$ and so
\[
\uptau\big(f^! M, f^*\sC\big) \subset \Hom_R\big(S, \uptau(M, \sC)\big) \subset \Hom_R(S,M).
\]
Therefore, $\Tr_{M}\big(f_\ast \uptau\big(f^! M, f^*\sC\big) \big) \subset \uptau(M,\sC)$. Set $\sT \coloneqq \Tr_{M}\big(f_\ast \uptau\big(f^! M, f^*\sC\big) \big)$. For the converse inclusion, recall that for any $f^*\sC$-submodule $N \subset f^! M$ we have that $\Tr_M(f_* N)$ is a $\sC$-submodule of $M$; see \autoref{rem.RemarkRelGor}. Then, by definition of $\uptau(M,\sC)$, it suffices to show that $H^0_\eta(\sT)_\eta \subset H^0_\eta(M)_\eta$ is a nil-isomorphism for all $\eta \in \Ass M$. By applying $\underline{\phantom{M}}$ and using that for any Cartier module $N$ the inclusion $\underline{N} \subset N$ is a nil-isomorphism, it suffices to show that
\begin{equation}\label{eqn.testmodulelocalcohom}
\underline{H^0_\eta(\sT)}_\eta \subset \underline{H^0_\eta(M)}_\eta \end{equation} is a nil-isomorphism (equivalently, an equality). Since localization is flat, $(\Tr_M(N))_{\eta} = \Tr_{M_\eta}(N_\eta)$ for all submodules $N \subset f^!M$, where $\Tr_{M_\eta}$ is the trace map of the base change of $f$ along $\Spec R_\eta \to \Spec R$.
Note that $\Tr$ is compatible with Cartier structures and so preserves nil-isomorphisms. We will show the following equalities and inclusions thereby showing that \autoref{eqn.testmodulelocalcohom} is an equality.
\begin{equation} \label{eq.whatweshow} \underline{H^0_\eta(M)}_\eta \overset{(1)}{=} \Tr_{M_\eta}\Bigl(\underline{H^0_{\eta S}(f^!M)}_\eta \Bigr) \overset{(2)}{=} \Tr_{M_\eta}\Bigl(\underline{H^0_{\eta S}\big(\uptau(f^! M, f^\ast \sC)\big)}_\eta \Bigr) \overset{(3)}{\subset}  \underline{H^0_\eta(\sT)}_\eta\end{equation}

Most readers will be more familiar with the case where $R$ and $S$ are domains, $\eta$ is the generic point of $R$, and $M$ has rank $1$. Up to technicalities, the general case proceeds very much like that one. Roughly speaking, the stable image of $H^0_\eta(f^!M)_\eta$ may be identified with $\Hom_{R/\eta}(S/\eta, M)_\eta$ and the stable image of $H^0_\eta(M)_\eta$ may be identified with the $\eta$-torsion of $M_\eta$. Denoting the $\eta$-torsion of $M$ by $N$, we therefore have to consider the Cartier module $N$ on $\Spec R/\eta$ where $\eta$ is now a generic point. Since $S$ is not assumed to be a domain, we need to work with the points in the fiber of $\eta$ separately. We now show \autoref{eq.whatweshow}.

\begin{claim} (1) holds in \autoref{eq.whatweshow} above.
\end{claim}
\begin{proof}[Proof of claim]
The restriction of the trace map to $\underline{H^0_{\eta S}(f^! M)}_\eta$ may be identified with the trace map $\Hom_{R_\eta/\eta}\bigl((S/\eta S)_\eta, M_\eta \bigr) \to M_\eta$. In particular, $R_\eta/\eta$ is a field and thus ${(S/\eta S)}_\eta$ is free with $1$ being part of some basis $B$. The containment from right to left is clear. Let $m$ belong to the left hand side. Fix $e \gg 0$ such that $\sC_+^e H^0_\eta(M)_\eta = \underline{H^0_\eta(M)}_\eta$ and such that it also computes $\underline{H^0_{\eta S}(f^!M)_\eta}$. Let $\varphi \in \mathcal{C}_+^e$ be homogeneous of degree $a$ and $n \in H^0_\eta(M)_\eta$ be such that $\varphi(n) = m$. Our goal is to construct an element $\alpha \in (f^!M)_\eta$ such that $\varphi(\alpha) \coloneqq \varphi \circ \alpha \circ F^a$ is $(\eta S)^k$-torsion and $\varphi(\alpha(1)) = m$. To this end, we define $\alpha\: (S/\eta S)_\eta \to M_\eta$ by setting $\alpha(1) = n$ and $\alpha(b) = 0$ for all other $b \in B$.
Write $\varphi = \sum_{i} \varphi_i$, where $\varphi_i$ is homogeneous of degree $i$. Then, for any $b \in B$, we have 
\begin{equation}
\label{eqn.testmodulestorsioncomp}
\varphi(\alpha)(b) = \sum_i \varphi_i \alpha\bigl(b^{p^{i}}\bigr) = \sum_i \varphi_i \alpha(r_{1,i}) = \sum_i \varphi_i(r_{1,i} n),
\end{equation} where we write $b^{p^i} = \sum_{b \in B} r_{b, i} b$. If $b =1$ in \autoref{eqn.testmodulestorsioncomp}, then $r_{1,i} = \delta_{i1}$ and the expression yields $m$. For general $b$, we still have $(\eta S)^k$-torsion since $n$ is $\eta^k$-torsion.
\end{proof}

\begin{claim}
(2) holds in \autoref{eq.whatweshow} above.
\end{claim}
\begin{proof}[Proof of claim]
We show that $\underline{H^0_{\eta S}(f^! M)}_\eta$ and $\underline{H^0_{\eta S}(\uptau(f^!(M, f^\ast \sC))}_\eta$ agree. To this end, recall that $\Ass f^!M = f^{-1}(\Ass M)$ by \cite[Lemma 6.12]{BlickleStablerFunctorialTestModules}. Note that the support of $ \underline{H^0_{\eta S}(f^! M)}_\eta$ is contained in $f^{-1}(\eta)$, which consists of a finite number of points. Hence,
\begin{equation}
\label{eqn.stableimage1}
\underline{H^0_{\eta S}(f^!M)}_\eta = \bigoplus_{\nu \in f^{-1}(\eta)} \underline{H^0_{\nu}(f^! M)}_\nu,
\end{equation}
where we note that a direct summand on the right hand side is zero whenever $\nu \notin \Supp \underline{H^0_{\eta S}(f^! M)}_\eta$. A similar statement holds for $\uptau(f^!M, f^\ast \sC)$ instead of $f^!M$. By definition of $\uptau$, we have for each $\nu \in f^{-1}(\eta)$ a nil-isomorphism $H^0_{\nu}\bigl(\uptau(f^!M, f^\ast \sC)_\nu\bigr) \subset H^0_{\nu}(f^!M_\nu)$, where again we may pass to the stable image on both sides and take direct sums to obtain:
\begin{equation}
\label{eqn.stableimage2}
\bigoplus_{\nu \in f^{-1}(\eta)} \underline{H^0_{\nu}\bigl(\uptau(f^!M, f^\ast \sC)\bigr)}_\nu = \bigoplus_{\nu \in f^{-1}(\eta)} \underline{H^0_{\nu}(f^!M)}_\nu.
\end{equation}Putting \autoref{eqn.stableimage1} and \autoref{eqn.stableimage2} together, we obtain
\begin{equation} \label{eqn.testmodulelocalcohom2} \underline{H^0_{\eta S}(\uptau(f^!M, f^\ast \sC))}_{\eta} = \underline{H^0_{\eta S} (f^!M)}_\eta. \end{equation} \end{proof}

\begin{claim}
(3) holds in \autoref{eq.whatweshow} above.
\end{claim}
\begin{proof}[Proof of claim]
We may drop the $\underline{\phantom{M}}$ on the right hand side. Indeed, $\Tr_{M_\eta}$ is a morphism of Cartier modules and quite generally, if $g$ is a Cartier morphism and $g(\underline{N}) \subset M$, then $\underline{M} = \sC_+^e M \supset \sC^e_+ g(\underline{N}) = g(\sC_+^e \underline{N}) = g(\underline{N})$. By construction, the left hand side is $\eta$-torsion. Hence, it suffices to show that it is contained in $\sT_\eta$ . Since $\Tr_M$ commutes with localization:
\[ 
\sT_{\eta} = \Tr_{M_\eta}\Bigl(\bigl(f_\ast \uptau(f^!M, f^\ast \sC)\bigr)_\eta\Bigr).
\]
By definition of $\uptau(f^!M, f^\ast \sC)$, for any $\nu \in \Spec S$, the inclusion $H^0_\nu\bigl(\uptau(f^!M, f^\ast \sC)\bigr)_\nu \subset H^0_\nu(f^!M)_\nu$ is a nil-isomorphism. This entails 
\[
\underline{H^0_\nu(f^!M)}_\nu = \underline{H^0_\nu\bigl(\uptau(f^!M, f^\ast \sC)\bigr)}_\nu \subset H^0_\nu\bigl(\uptau(f^!M, f^\ast \sC)\bigr)_\nu.
\] 
Taking the (direct) sum over all $\nu \in f^{-1}(\eta)$ in the inclusion above, we get 
\begin{align*}
 \bigoplus_{\nu \in f^{-1}(\eta)} \underline{H^0_\nu(\uptau(f^!M, f^\ast \sC))}_\nu =\underline{H^0_{\eta S}(f^! M)}_\eta & \subset  \bigoplus_{\nu \in f^{-1}(\eta)} H^0_\nu\bigl(\uptau(f^!M, f^\ast \sC)\bigr)_\nu\\ &= H^0_{\eta S}\bigl(\uptau(f^!M, f^\ast \sC)\bigr)_\eta \subset \uptau(f^!M_\eta, f^\ast \sC_\eta).
\end{align*}
Applying $\Tr$ shows the desired inclusion.
\end{proof}
This demonstrates the result.
\end{proof}

\begin{corollary}[{\cf \cite{SchwedeTuckerTestIdealFiniteMaps}}] \label{cor.SpecializationTransposability}
Work in \autoref{set.Transposability}. Let $\sC$ be a Cartier $R$-algebra. Then,
\begin{equation}
\label{eq.tracetaugeneral}
T_{M}\big(f_* \uptau(f^\dagger M, f^*\sC) \big) = \uptau(M,\sC)
\end{equation}
for all $T$-transposable Cartier $\sC$-modules $M$, where $T_M = \Tr_M \circ \varsigma_M$. In particular, if $R$ is a $T$-transposable Cartier $\sC$-module, we have
\begin{equation} \label{eqn.TransformationRuleTestIdelasRingsTrans}
T\big(f_* \uptau (S, f^*\sC ) \big) = \uptau(R,\sC).
\end{equation}
Further, working in the setup of \autoref{thm.TransposabilityCriterion}, if $R(D)$ is $T$-transposable then
\begin{equation}
\label{eq.tracetauspecial}
T_K\Big(f_* \uptau\big(S(f^*D), f^*\sC\big) \Big) = \uptau\big(R(D),\sC\big)
\end{equation}
where $T_K \: L \to K$ coincides with $K \otimes_R T$.
Moreover, if $f$ is generically flat and $\Delta^* \coloneqq f^*\Delta - \Ram_T \geq 0$ for some effective almost Cartier $\bQ$-divisor on $X$, then
\begin{equation}
\label{eq.tracetauschwedetuckerrecovered}
T_K\Big(f_* \uptau\big(S(f^*D), \Delta^*, (\fraa S)^t\big) \Big) =
\uptau\big(R(D),\Delta, \fraa^t \big).
\end{equation}
\end{corollary}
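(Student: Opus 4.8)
The plan is to deduce \autoref{cor.SpecializationTransposability} as a sequence of specializations of \autoref{theo.TraceTauSurjective}, using transposability to replace the Cartier-theoretic pullback $f^!$ by the ``geometric'' pullback $f^{\dagger}$ wherever possible. First I would prove \autoref{eq.tracetaugeneral}: since $M$ is a $T$-transposable Cartier $\sC$-module, \autoref{def.transposability} tells us that $\varsigma_M \: f^{\dagger}M \to f^!M$ is a morphism of Cartier $f^*\sC$-modules, and by \autoref{cor.varSigmatestModules} it induces an isomorphism $\uptau(f^{\dagger}M, f^*\sC) \xrightarrow{\sim} \uptau(f^!M, f^*\sC)$. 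Composing with $\varsigma_M$ then identifies $f_* \uptau(f^{\dagger}M, f^*\sC)$ with $f_* \uptau(f^!M, f^*\sC)$ inside $f_* f^! M$, and since $T_M = \Tr_M \circ \varsigma_M$ by definition, applying $\Tr_M$ and invoking \autoref{theo.TraceTauSurjective} gives $T_M\big(f_*\uptau(f^{\dagger}M, f^*\sC)\big) = \Tr_M\big(f_*\uptau(f^!M,f^*\sC)\big) = \uptau(M,\sC)$, which is \autoref{eq.tracetaugeneral}.

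Next, \autoref{eqn.TransformationRuleTestIdelasRingsTrans} is the case $M=R$: here $f^{\dagger}R$ is (the $\mathbf{S}_2$-ification of) $S$, which is just $S$ since $S$ satisfies $\mathbf{S}_2$ in \autoref{set.Transposability}... actually in \autoref{set.Transposability} only $\mathbf{S}_1$ is assumed, so I should instead use the variant of transposability for $M=R$ satisfying only $\mathbf{S}_1$ (the remark after \autoref{def.transposability}), where $\varsigma_M$ is replaced by $\sigma_R \: S \to \omega_{S/R}$; the argument of \autoref{cor.varSigmatestModules} still applies verbatim since $\sigma_R$ is an injective generic isomorphism between modules without embedded primes. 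Then $T_R = \Tr_R \circ \sigma_R = T$ and \autoref{eq.tracetaugeneral} specializes to $T\big(f_*\uptau(S,f^*\sC)\big) = \uptau(R,\sC)$.

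For \autoref{eq.tracetauspecial} I would work in the setup of \autoref{thm.TransposabilityCriterion}, where $R$ and $S$ satisfy $\mathbf{G}_1+\mathbf{S}_2$ and $(!)$, and apply \autoref{eq.tracetaugeneral} with $M = R(D)$ (which is now $\omega$-reflexive, hence a legitimate object of $R\textnormal{-mod}^{\omega}$). By \autoref{rem.Interpreation}, $f^{\dagger}R(D) \cong S(f^*D)$, with $\varsigma_{R(D)}$ the inclusion $S(f^*D) \hookrightarrow S(f^*D+K_{S/R})$ cut out by $\Ram_T \sim K_{S/R}$; this is exactly the identification under which $T_{R(D)}$ becomes $T_K$ (and $f_*\uptau$ is computed as a submodule generically equal to the full module, as in \autoref{cor.varSigmatestModules}). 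Thus \autoref{eq.tracetaugeneral} reads $T_K\big(f_*\uptau(S(f^*D),f^*\sC)\big) = \uptau(R(D),\sC)$. To see that $T_K = K\otimes_R T$ one localizes at the generic points: generically $\varsigma_{R(D)}$ is an isomorphism and $T_{R(D)}$ degenerates to the trace twisted by $T$, which is $K\otimes_R T$.

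Finally, \autoref{eq.tracetauschwedetuckerrecovered} follows by combining \autoref{eq.tracetauspecial} applied to the Cartier algebra $\sC = \sC^{\Delta,\fraa^t}_{R(D)}$ with \autoref{pro.RecoveringAllStuff}. The hypothesis $\Delta^* = f^*\Delta - \Ram_T \geq 0$ and generic flatness of $f$ are precisely what \autoref{pro.RecoveringAllStuff} needs; it gives $\uptau\big(S(f^*D), f^*\sC^{\Delta,\fraa^t}_{R(D)}\big) = \uptau\big(S(f^*D), \Delta^*, (\fraa S)^t\big)$ (and in particular $R(D)$ is $T$-transposable as a $\sC^{\Delta,\fraa^t}_{R(D)}$-module, so \autoref{eq.tracetauspecial} does apply), while on the downstairs side $\uptau(R(D), \sC^{\Delta,\fraa^t}_{R(D)}) = \uptau(R(D),\Delta,\fraa^t)$ by definition. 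Substituting both identities into \autoref{eq.tracetauspecial} yields \autoref{eq.tracetauschwedetuckerrecovered}. I expect the main obstacle to be purely bookkeeping: making sure that the identification $f^{\dagger}R(D)\cong S(f^*D)$ from \autoref{rem.Interpreation} is compatible, as a morphism of Cartier modules and after applying $\Tr$, with the map called $T_K$ in the statement, and that the generic-flatness hypotheses are threaded correctly through \autoref{pro.RecoveringAllStuff}; the conceptual content is entirely carried by \autoref{theo.TraceTauSurjective}, \autoref{cor.varSigmatestModules}, and \autoref{pro.RecoveringAllStuff}.
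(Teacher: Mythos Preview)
Your proposal is correct and follows essentially the same approach as the paper: combine \autoref{cor.varSigmatestModules} with \autoref{theo.TraceTauSurjective} to get \autoref{eq.tracetaugeneral}, specialize via \autoref{rem.Interpreation} to obtain \autoref{eq.tracetauspecial}, and then invoke \autoref{pro.RecoveringAllStuff} for \autoref{eq.tracetauschwedetuckerrecovered}. Your extra care with the $M=R$ case under only $\mathbf{S}_1$ (using $\sigma_R$ in place of $\varsigma_M$) is a detail the paper's proof leaves implicit.
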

\begin{proof}
By \autoref{cor.varSigmatestModules}, $\varsigma_M(f_\ast \uptau(f^\dagger M, f^\ast \sC)) = \uptau(f^! M, f^\ast \sC)$. Applying $\Tr_M$ and using \autoref{theo.TraceTauSurjective} shows \autoref{eq.tracetaugeneral}. For \autoref{eq.tracetauspecial}, \autoref{rem.Interpreation} gives that $f^\dagger R(D) \cong S(f^\ast D)$ and, under this isomorphism, $T_{R(D)}$ corresponds to the composition
\[
S(f^*D) \xrightarrow{\Ram_T \sim K_{S/R}} S(f^*D+K_{S/R}) \xrightarrow{} R(D)
\]
where, by naturality, the latter map is the restriction of $T_K \: L \to K$ under the inclusions $R(D) \subset K$ and $S(f^*D) \subset L$. Lastly, \autoref{eq.tracetauschwedetuckerrecovered} follows from \autoref{eq.tracetauspecial} and \autoref{pro.RecoveringAllStuff}.
\end{proof}

We illustrate next the simpler form \autoref{theo.TraceTauSurjective} takes in case $M=\omega_R$. Recall that if $\kappa_R \: F^e_* \omega_R \to \omega_R$ is a Cartier operator of $R$ then $\kappa_S \coloneqq \kappa_R^{!} \: F^e_* \omega_S \to \omega_S$ is a Cartier operator of $S$ as $ \Delta_{\kappa_S}=f^* \Delta_{\kappa_R} = f^* 0 = 0$.

\begin{corollary}[\cf \cite{BlickleSchwedeTuckerTestAlterations}]
\label{cor.CanonicalModulesTrans}  \label{cor.CohenMacualayCanonicalModulesTrans}
Work in the setup of \autoref{thm.TransposabilityCriterion} assuming $R$ and $S$ are Cohen--Macaulay. Then, $\Tr_{\omega_R} \big(f_* \uptau(\omega_S, \kappa_S)\big) = \uptau(\omega_R, \kappa_R)$. In particular, $\Tr_{\omega_R}\: f_* \omega_S \to \omega_R$ is surjective if $R$ is $F$-rational. Conversely, if $\Tr_{\omega_R} \: f_* \omega_S \to \omega_R$ is surjective and $S$ is $F$-rational, then $R$ is $F$-rational.
\end{corollary}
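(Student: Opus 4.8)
The plan is to read this off \autoref{theo.TraceTauSurjective} applied to $M = \omega_R$ equipped with its full Cartier algebra $\sC \coloneqq \sC_{\omega_R}$; the transposability machinery is not needed here, because $f^!$ already carries $(\omega_R,\kappa_R)$ to $(\omega_S,\kappa_S)$ on the nose. The single point requiring verification is precisely this identification of Cartier modules \emph{together with} their Cartier algebras, after which the statement is a transcription of \autoref{theo.TraceTauSurjective}.

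First I would record that, since $R$ is Cohen--Macaulay and lies in the setup of \autoref{thm.TransposabilityCriterion} (so satisfies $\mathbf{G}_0 + \mathbf{S}_2 + (!)$), \autoref{ex.SchwedeCorrespondence} with $D = 0$ realizes $\ssHom_R(F^e_*\omega_R, \omega_R) \cong \sO_R$ via $\kappa_R^e = (\kappa_R)^e$; hence $\Hom_R(F^e_*\omega_R, \omega_R) = \kappa_R^e \cdot R$ for every $e$, so the full Cartier algebra of $\omega_R$ is $\sC_{\omega_R} = \sC_{\omega_R}^{\kappa_R}$. By the footnote to \autoref{thm.TransposabilityCriterion}, $S$ again satisfies $(!)$, and $S$ is Cohen--Macaulay by hypothesis, so the same argument gives $\sC_{\omega_S} = \sC_{\omega_S}^{\kappa_S}$. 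Next, the compatible choice of canonical modules gives $\omega_S = f^!\omega_R$, and $\kappa_S = \kappa_R^!$ as recalled just before the statement (using $\Delta_{\kappa_S} = f^*\Delta_{\kappa_R} = 0$). Since $\kappa_R$ is nondegenerate, \autoref{rem.RemarkRelGor}(2) supplies an isomorphism of Cartier $S$-algebras $f^*\sC_{\omega_R}^{\kappa_R} \xrightarrow{\ \sim\ } \sC_{\omega_S}^{\kappa_R^!} = \sC_{\omega_S}^{\kappa_S}$, compatible with the Cartier $f^*\sC$-module structure on $f^!\omega_R$. Thus $(f^!M, f^*\sC)$ is $(\omega_S, \kappa_S)$ with $\omega_S$ carrying its full Cartier algebra, and \autoref{theo.TraceTauSurjective} yields $\Tr_{\omega_R}\big(f_*\uptau(\omega_S,\kappa_S)\big) = \uptau(\omega_R,\kappa_R)$.

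The two remaining assertions are then formal, being the ``in particular'' and ``conversely'' clauses of \autoref{theo.TraceTauSurjective} read through the definition of $F$-rationality: $R$ (resp.\ $S$) is $F$-rational exactly when $\uptau(\omega_R,\kappa_R) = \omega_R$ (resp.\ $\uptau(\omega_S,\kappa_S) = \omega_S$). If $R$ is $F$-rational then $\omega_R = \uptau(\omega_R,\kappa_R) = \Tr_{\omega_R}\big(f_*\uptau(\omega_S,\kappa_S)\big) \subseteq \Tr_{\omega_R}(f_*\omega_S) \subseteq \omega_R$, forcing $\Tr_{\omega_R}$ to be surjective; conversely, if $\Tr_{\omega_R}$ is surjective and $S$ is $F$-rational, then $\omega_R = \Tr_{\omega_R}(f_*\omega_S) = \Tr_{\omega_R}\big(f_*\uptau(\omega_S,\kappa_S)\big) = \uptau(\omega_R,\kappa_R)$, so $R$ is $F$-rational. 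The only (modest) obstacle is the bookkeeping of the previous paragraph---carrying the equalities $\sC_{\omega_R} = \sC_{\omega_R}^{\kappa_R}$, $\sC_{\omega_S} = \sC_{\omega_S}^{\kappa_S}$ and the isomorphism $f^*\sC_{\omega_R} \cong \sC_{\omega_S}$ through \autoref{ex.SchwedeCorrespondence} and \autoref{rem.RemarkRelGor}(2)---after which the corollary is immediate from \autoref{theo.TraceTauSurjective}.
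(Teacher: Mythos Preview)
Your proposal is correct and matches the paper's intended argument: the corollary is stated without proof because it is meant to be an immediate application of \autoref{theo.TraceTauSurjective} with $M=\omega_R$, using the identifications $\omega_S=f^!\omega_R$, $\kappa_S=\kappa_R^!$, and $\sC_{\omega_R}=\sC_{\omega_R}^{\kappa_R}$ (the last via \autoref{ex.SchwedeCorrespondence}, exactly as you explain). Your bookkeeping via \autoref{rem.RemarkRelGor}(b) to identify $f^*\sC_{\omega_R}$ with the full Cartier algebra of $\omega_S$ is precisely what the paper has in mind.
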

A.~Singh constructed a $\bQ$-Gorenstein $F$-rational singularity whose canonical cover is not $F$-rational \cite{SinghCyclicCoversOfRational}. Therefore, we cannot expect in \autoref{cor.CanonicalModulesTrans} (resp. \autoref{theo.TraceTauSurjective}) that the $F$-injectivity of $R$ (resp.\ the $F$-regularity of $M$) implies the $F$-injectivity of $S$ (resp.\ the $F$-regularity of $f^! M$). 

\subsubsection{Relationship with splinters} \label{subsubsec.RelationshipSplinters}
By plugging in $\sC = \sC_R$ and $M=R$ in \autoref{theo.TraceTauSurjective}, we see why strongly $F$-regular rings are splinters; see \cite{MASplinters, HochsterContractedIdealsFromIntegralExtensions}. Moreover, we may take a splitting of the finite cover $R \subset S$ to be an element of $\uptau(\omega_{S/R}, f^*\sC_R)$ when $R$ is strongly $F$-regular. In other words, while a splinter guarantees a splitting for any finite extension $R \subset S$, a strongly $F$-regular ring guarantees that splitting to be an element of $\uptau(\omega_{S/R}, f^*\sC_R)$. 
\begin{corollary}
Let $R$ be a domain. Then, $R$ is strongly $F$-regular if and only if every ( equivalently some) finite extension $R \subset S$ splits by an element in $\uptau(\omega_{S/R}, f^*\sC_R)$.
\end{corollary}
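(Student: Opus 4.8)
The plan is to deduce this directly from \autoref{theo.TraceTauSurjective}. First I would record the two translations that make the statement elementary. On the one hand, $f^! R = \Hom_R(S,R) = \omega_{S/R}$, and by \autoref{pro.GroDuality} the trace $\Tr_R \: f_* f^! R \to R$ is evaluation at $1$; so an element $\psi$ of $f_*\omega_{S/R} = \Hom_R(S,R)$ is a splitting of the inclusion $R \subset S$ exactly when $\Tr_R(\psi) = \psi(1) = 1$. On the other hand, since $R$ is a domain it is strongly $F$-regular precisely when $(R,\sC_R)$ is $F$-regular, i.e. $\uptau(R,\sC_R) = R$; as $\uptau(R,\sC_R)$ is an ideal of $R$, this is the same as $1 \in \uptau(R,\sC_R)$.

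Next I would apply \autoref{theo.TraceTauSurjective} with $\sC = \sC_R$ and $M = R$. For any finite extension $R \subset S$ the inclusion induces a finite cover $f \: \Spec S \to \Spec R$, and the theorem gives
\[
\Tr_R\big(f_* \uptau(\omega_{S/R}, f^*\sC_R)\big) = \uptau(R,\sC_R).
\]
Hence $1 \in \uptau(R,\sC_R)$ if and only if there exists $\psi \in \uptau(\omega_{S/R}, f^*\sC_R)$ with $\Tr_R(\psi) = 1$, i.e. if and only if $R \subset S$ splits by an element of $\uptau(\omega_{S/R}, f^*\sC_R)$.

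Finally I would assemble the equivalences. If $R$ is strongly $F$-regular then $1 \in \uptau(R,\sC_R)$, so the displayed identity shows that \emph{every} finite extension $R \subset S$ splits by an element of $\uptau(\omega_{S/R}, f^*\sC_R)$. Conversely, if \emph{some} finite extension has this property, the same identity (for that $S$) forces $1 \in \uptau(R,\sC_R)$, whence $R$ is strongly $F$-regular; and a finite extension always exists (e.g. $S = R$, with $\omega_{R/R} = R$), so the ``some'' alternative is not vacuous. Thus both ``for every'' and ``for some'' are equivalent to strong $F$-regularity, which is the parenthetical assertion. There is no real obstacle here: the only points worth spelling out are the identification of a module splitting with the condition $\Tr_R(\psi) = 1$ and the remark that the ideal $\uptau(R,\sC_R)$ equals $R$ as soon as it contains $1$.
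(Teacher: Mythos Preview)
Your proposal is correct and follows exactly the approach the paper intends: the corollary is stated immediately after the discussion that plugs $\sC=\sC_R$ and $M=R$ into \autoref{theo.TraceTauSurjective}, and the paper gives no further proof beyond that remark. You have simply spelled out the details (identifying $f^!R=\omega_{S/R}$, $\Tr_R(\psi)=\psi(1)$, and $\uptau(R,\sC_R)=R$ as the meaning of strong $F$-regularity), which is precisely what the paper leaves to the reader.
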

The $F$-regularity of splinters is to many the main open problem in positive characteristic singularity theory. The answer is known to be affirmative for $\bQ$-Gorenstein rings by \cite{HochsterHunekeTCParameterIdealsAndSplitting, SinghQGorensteinSplinters} yet far open beyond this case. Let us fix a domain $R$ and say that a finite $R$-module $M$ is a \emph{splinter} if $\Tr_M \: f_*f^! M \to M$ is surjective for all covers $f\: \Spec S \to \Spec R$. Thus, \autoref{theo.TraceTauSurjective} says that $F$-regular Cartier modules are splinters. We may wonder about the converse: Is a splinter module $F$-regular with respect to its full Cartier algebra? Of course, the case of interest is $M=R$. We even got the following inclusion of subodules of $M$
\begin{equation} \label{eqn.FundamentalSplinterRegularInclusion}
\uptau(M, \sC_M) \subset \bigcap_{f} \Tr_M\big(f_*f^! M\big),
\end{equation}
where the intersection traverses over all covers $f\: \Spec(S) \to \Spec(R)$. For what (families of) modules is \autoref{eqn.FundamentalSplinterRegularInclusion} an equality? For example, this is known to hold for the class of canonical modules over Cohen--Macaulay rings by \cite{BlickleSchwedeTuckerTestAlterations}, where $F$-rationality of Cohen--Macaulay rings is characterized by the canonical module being a splinter module. Does equality hold in \autoref{eqn.FundamentalSplinterRegularInclusion} for $M= \omega_R(\varepsilon D)$ if $0 < \varepsilon \ll 1$?

\subsection{Non-$F$-pure modules under finite covers} \label{sec.NonFPuremodulesResults} Given a Cartier module $(M,\sC)$, let us write $\upsigma(M,\sC)\coloneqq \underline{M}$ and refer to it as the \emph{non-$F$-pure module} (this terminology is borrowed from the case of non-$F$-pure ideals \cite{FujinoSchwedeTakagiSupplements}). It is natural to ask whether the formula $\Tr_M \big(f_*\upsigma(f^!M,f^*\sC)\big) = \upsigma(M,\sC)$ holds. We readily see that the inclusion ``$\subset$'' holds, whereas the converse requires the surjectivity of $\Tr_M$ to hold.
\begin{proposition} \label{pro.NonFPureModulesTrans}
Work in the setup of \autoref{theo.TraceTauSurjective}. Then, the following inclusion holds
\[
\Tr_M \big(f_*\upsigma(f^!M,f^*\sC)\big) \subset \upsigma(M,\sC)
\]
and equality holds if $\Tr_M \: f_* f^! M \to M$ is surjective.
\end{proposition}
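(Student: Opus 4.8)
The plan is to reduce everything to the elementary behaviour of the stable-image operation $\underline{\phantom{M}}$ under morphisms of Cartier modules. First I would record two bookkeeping observations. The first is that $f_*f^!M$ carries a Cartier $\sC$-module structure obtained by restriction of scalars along the Cartier $R$-algebra homomorphism $\sC\to f^*\sC$, $\kappa\mapsto\kappa\otimes 1$, and that with respect to this structure the trace $\Tr_M\colon f_*f^!M\to M$ is a morphism of Cartier $\sC$-modules; this is essentially \autoref{rem.RemarkRelGor}(1). The second is that the stable image of $f^!M$ does not depend on whether one uses the $f^*\sC$-action or the restricted $\sC$-action: since $\big((\kappa_e\otimes s')\cdot\mu\big)(s)=\kappa_e\cdot\mu(s's^q)=\big((\kappa_e\otimes 1)\cdot(s'\cdot\mu)\big)(s)$ and $f^!M$ is already an $S$-module, one gets $(f^*\sC)_+^e f^!M=\sC_+^e f^!M$ for every $e$, so $\upsigma(f^!M,f^*\sC)$ coincides with $\underline{f^!M}$ computed over the restricted $\sC$-action.

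With these in hand, the first inclusion is the general principle already invoked in the proof of \autoref{theo.TraceTauSurjective}: for any morphism $g\colon N\to M$ of Cartier $\sC$-modules one has $g(\underline N)\subset\underline M$, since for $e$ large enough to compute both stable images $g(\underline N)=g(\sC_+^e N)=\sC_+^e g(N)\subset\sC_+^e M=\underline M$. Taking $g=\Tr_M$ and $N=f_*f^!M$ (and invoking the second observation to identify $\underline N$ with $f_*\upsigma(f^!M,f^*\sC)$) yields $\Tr_M\big(f_*\upsigma(f^!M,f^*\sC)\big)\subset\upsigma(M,\sC)$.

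For the reverse inclusion under the hypothesis that $\Tr_M$ is surjective, I would fix $e$ large enough to simultaneously compute $\underline M=\sC_+^e M$ and $\underline{f^!M}=\sC_+^e f^!M$. Surjectivity gives $M=\Tr_M(f_*f^!M)$; applying $\sC_+^e$ and using the $\sC$-linearity of $\Tr_M$ (so that $\sC_+^e\Tr_M(-)=\Tr_M(\sC_+^e(-))$) together with the identification of stable images above, one obtains
\begin{gather*}
\upsigma(M,\sC)=\sC_+^e M=\sC_+^e\Tr_M(f_*f^!M)\\
=\Tr_M\big(\sC_+^e(f_*f^!M)\big)=\Tr_M\big(f_*\,\underline{f^!M}\,\big)=\Tr_M\big(f_*\upsigma(f^!M,f^*\sC)\big),
\end{gather*}
which is the desired equality.

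There is no genuine obstacle here: the content is entirely in the two preliminary observations, and the only point requiring care is verifying that the Cartier $\sC$-module structure placed on $f_*f^!M$ is precisely the one for which $\Tr_M$ is Cartier-linear, and that passing between the $f^*\sC$- and $\sC$-actions leaves the stable image unchanged. Once that is settled the statement follows immediately.
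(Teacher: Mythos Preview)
Your proof is correct and follows essentially the same approach as the paper's. The paper's proof is a terse two-line version of what you wrote: it simply records $\Tr_M\big(f_\ast (f^*\sC)_+^{e}f^!M\big) = \sC_+^e \Tr_M\big(f_\ast f^!M\big) \subset \sC_+^e M$ and takes $e$ large, leaving implicit the two bookkeeping points you carefully spelled out (that $\Tr_M$ is $\sC$-linear and that $(f^*\sC)_+^e f^!M = \sC_+^e f^!M$).
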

\begin{proof}
We already observed in \autoref{theo.TraceTauSurjective} that $\Tr_M$ is a $\sC$-morphism, which implies
\[
\Tr_M \big(f_\ast f^*\sC_+^{e}f^!M\big) = \sC_+^e \Tr_M\big(f_\ast f^!M\big) \subset \sC_+^e M.
\]
The result follows by taking $e$ sufficiently large and noting that if $\Tr_M$ is surjective the displayed inclusion is an equality.
\end{proof}
\begin{corollary}
Work in the setup of \autoref{cor.CanonicalModulesTrans}. If $\Tr \: f_* \omega_S \to \omega_R$ is surjective, then $\Tr\bigl(f_* \upsigma(\omega_S, \kappa_S)\bigr) = \upsigma(\omega_R,\kappa_R)$. In particular, $R$ is $F$-injective if so is $S$.
\end{corollary}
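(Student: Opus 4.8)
The plan is to read this off directly from \autoref{pro.NonFPureModulesTrans}, applied to the Cartier module $M = \omega_R$ with its full Cartier algebra $\sC = \sC^{\kappa_R}_{\omega_R}$, generated by the Cartier operator $\kappa_R$. First I would recall the identifications already set up just before \autoref{cor.CanonicalModulesTrans}: since $R$ and $S$ are Cohen--Macaulay (and by the convention $\omega_S = f^!\omega_R$ of \autoref{def.CanonicalModules}), one has $f^!\omega_R \cong \omega_S$, and under this isomorphism the transpose $\kappa_R^!$ is a Cartier operator $\kappa_S$ of $S$ because $\Delta_{\kappa_S} = f^*\Delta_{\kappa_R} = 0$; correspondingly, by \autoref{rem.RemarkRelGor}(2) the $f^*\sC$-module structure on $f^!\omega_R$ is precisely the one generated by $\kappa_S$. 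Substituting all of this into \autoref{pro.NonFPureModulesTrans} gives the inclusion $\Tr_{\omega_R}\bigl(f_*\upsigma(\omega_S,\kappa_S)\bigr) \subset \upsigma(\omega_R,\kappa_R)$, and the hypothesis that $\Tr \colon f_*\omega_S \to \omega_R$ is surjective upgrades this to the asserted equality.

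For the last sentence I would invoke the elementary fact that for any Cartier module $(N,\sD)$ one has $\upsigma(N,\sD) = \underline N = N$ if and only if $(N,\sD)$ is $F$-pure: the descending chain $N \supseteq \sD_+N \supseteq \sD_+^2 N \supseteq \cdots$ stabilizes to $\underline N$, so it is constant (equal to $N$) exactly when $\sD_+ N = N$. Hence, if $S$ is $F$-injective, i.e.\ $(\omega_S,\kappa_S)$ is $F$-pure, then $\upsigma(\omega_S,\kappa_S) = \omega_S$, and the displayed equality together with the surjectivity of $\Tr$ forces $\upsigma(\omega_R,\kappa_R) = \Tr(f_*\omega_S) = \omega_R$; thus $(\omega_R,\kappa_R)$ is $F$-pure, that is, $R$ is $F$-injective.

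I do not expect a genuine obstacle here: the only slightly delicate point is bookkeeping, namely making sure that the Cartier $f^*\sC$-module structure appearing in \autoref{pro.NonFPureModulesTrans} is the $\kappa_S$-structure rather than some larger one, which is exactly the non-degenerate case of \autoref{rem.RemarkRelGor}(2) applied to the non-degenerate map $\kappa_R$ (non-degeneracy being automatic since $\kappa_R$ is a Cartier operator). Everything else is formal manipulation of these identifications and of the definition of the non-$F$-pure module as a stable image.
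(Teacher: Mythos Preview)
Your proposal is correct and follows exactly the paper's approach: the paper's proof reads in full ``Use \autoref{pro.NonFPureModulesTrans},'' and what you have written is precisely the unpacking of that reference together with the identifications $f^!\omega_R\cong\omega_S$ and $\kappa_R^{!}=\kappa_S$ already recorded just before \autoref{cor.CanonicalModulesTrans}. Your bookkeeping on the $f^*\sC$-structure via \autoref{rem.RemarkRelGor}(b) is fine and the deduction of $F$-injectivity from $\upsigma(\omega_S,\kappa_S)=\omega_S$ is standard.
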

\begin{proof}
Use \autoref{pro.NonFPureModulesTrans}.
\end{proof}

The following example shows that surjectivity of the trace is necessary for the equality in \autoref{pro.NonFPureModulesTrans} to hold. Compare this to the discussion in \cite[\S 8]{SchwedeTuckerTestIdealFiniteMaps}. 

\begin{example}
Consider the example (a) in \autoref{ex.NecessityHypothesisTransRule}. Note that $R$ and $S$ are $F$-pure Gorenstein local rings; use Fedder's criterion for $R$ \cite{FedderFPureRat}. 
Then, $\upsigma(R,\kappa_R)=R$ and $\upsigma(S,\kappa_S)=S$, yet $\Tr_R(S)=(x,y,z)$. Nevertheless, $\uptau(R,\kappa_R)=(x,y,z)$ and $\uptau(S,\kappa_S)=S$, which verifies \autoref{theo.TraceTauSurjective}.
\end{example}

\subsection{Test ideals along closed subschemes under finite covers} \label{sec.AdjointIdeals}
In this section, we explain how test ideals along closed subschemes---as treated in \cite[\S 3.1]{Smolkinphdthesis}, \cite[\S4]{SmolkinSubadditivity}---behave under finite covers. These were introduced as positive characteristic analogs of adjoint ideals in characteristic zero by Takagi \cite{TakagiPLTAdjoint,TakagiHigherDimensionalAdjoint}.

Fix a Cartier subalgebra $\sC \subset \sC_R$ and a radical $\sC$-compatible ideal $\mathfrak{a}$ with irredundant primary decomposition  $\mathfrak{a} = \p_1 \cap \cdots \cap \p_r$. All $\p_i$ are prime since $\mathfrak{a}$ is radical and further $\sC$-compatible by \cite[Corollary 4.8]{SchwedeCentersOfFPurity}. Set $P = \bigcup_{i=1}^r \p_i$. The \emph{adjoint ideal} $\uptau_{\mathfrak{a}}(R,\sC)$ is defined as the smallest Cartier $\sC$-submodule of $R$ not contained in $P$. The Cartier algebra $\sC$ is called \emph{non-degenerate with respect to $\mathfrak{a}$} if there is $e > 0$ such that $\sC_e(R)$ is not contained in $P$. By prime avoidance, this is equivalent to $\sC_e(R) \nsubset \p_i$ for all $i$. In other words, $(R,\sC)$ is non-degenerate with respect to $\mathfrak{a}$ if and only if $(R,\sC)$ is non-degenerate with respect to $\mathfrak{p_i}$ for all $i$,  \ie $\p_i$ is a center of $F$-purity of $(R,\sC)$ for all $i$. Smolkin proved that, for $\mathfrak{a}$ prime, $\uptau_\mathfrak{a}(R, \sC)$ exists if $\sC$ is non-degenerate, assuming $R$ is a domain (as well as noetherian and $F$-finite; as we do)---see \cite[\S4.1]{SmolkinSubadditivity} and \cite[\S3.1]{Smolkinphdthesis}. Smolkin explained to us that a similar theory can be developed assuming $\mathfrak{a}$ radical and $R$ noetherian and $F$-finite (\cite{SmolkinGeneralAdjointIdeals}). Nevertheless, we will be able to reduce to the case $\mathfrak{a}$ is prime using \autoref{lem.ReductionOneComponent} below but we will assume that $R$ is a domain to cite \cite[\S3.1]{Smolkinphdthesis}.

One says that a pair $(R,\sC)$ is \emph{purely $F$-regular along a radical ideal $\mathfrak{a} \subset R$} if $\uptau_{\mathfrak{a}}(R,\sC)=R$ (implicitly assuming that $\uptau_{\mathfrak{a}}(R,\sC)$ exists). For these ideals, we have the following.

\begin{lemma} \label{lem.ReductionOneComponent}
Let $R$ be a domain and $\mathfrak{a} \subset R$ be a radical ideal that is a Cartier $\sC$-submodule. Let $\p_1, \ldots , \p_k$ be the minimal primes of $\mathfrak{a}$. Assume that $(R,\sC)$ is non-degenerate with respect to $\fra$. Then, $\uptau_\mathfrak{a}(R, \sC) = \sum_{i=1}^k \uptau_{\p_i}(R, \sC)$.
\end{lemma}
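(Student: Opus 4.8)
The plan is to exploit the fact that non-degeneracy and the test-ideal-along-a-subscheme are both detected by prime avoidance against the union $P = \bigcup \p_i$, together with the observation that a sum of Cartier submodules is a Cartier submodule.

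First I would dispatch the non-degeneracy equivalence. By definition $(R,\sC)$ is non-degenerate with respect to $\fraa$ iff there is $e>0$ with $\sC_e(R) \nsubset P$, which by prime avoidance is equivalent to $\sC_e(R) \nsubset \p_i$ for every $i$ (this reformulation is already recorded in the paragraph preceding the lemma). If $(R,\sC)$ is non-degenerate with respect to each $\p_i$, choose $e_i$ with $\sC_{e_i}(R) \nsubset \p_i$; passing to a common multiple $e$ and using that $\sC$ is $\bN$-graded with $r\cdot\sC_d = \sC_d \cdot r^q$ so that $\sC_{d}(R)\subset \p_i \Rightarrow \sC_{d'}(R)\subset\p_i$ whenever $\sC$ acts through $\sC_R$ — more simply, since each $\p_i$ is $\sC$-compatible one only needs a single $e$ that works simultaneously, obtained by the standard graded argument — one gets $\sC_e(R)\nsubset \p_i$ for all $i$, hence $\sC_e(R)\nsubset P$. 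The converse is immediate since $\sC_e(R)\nsubset P$ forces $\sC_e(R)\nsubset\p_i$ for every $i$. (I should be careful here: the clean statement is that non-degeneracy w.r.t.\ $\fraa$ is \emph{literally} the conjunction of non-degeneracy w.r.t.\ the $\p_i$, via prime avoidance applied to the fixed finitely generated module $\sC_e(R)$ for each $e$; no common-multiple trick is even needed if one phrases it as ``there exists $e$ with $\sC_e(R)\nsubset \p_i$'' being equivalent across all $i$ after replacing $e$ by a multiple, which holds because $\p_i$ is a Cartier submodule.)

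Next, assuming non-degeneracy, I would prove $\uptau_\fraa(R,\sC) = \sum_{i=1}^k \uptau_{\p_i}(R,\sC)$ by a double inclusion. Set $\sN \coloneqq \sum_i \uptau_{\p_i}(R,\sC)$. Each $\uptau_{\p_i}(R,\sC)$ is a Cartier $\sC$-submodule of $R$, so $\sN$ is one as well. For ``$\subset$'': I claim $\sN \nsubset P$. Indeed, if $\sN \subset P = \bigcup \p_j$ then, since $\p_1,\dots,\p_k$ are the minimal primes of the radical ideal $\fraa$, prime avoidance would give $\sN \subset \p_j$ for some single $j$; but then $\uptau_{\p_j}(R,\sC) \subset \p_j$, contradicting the defining property of $\uptau_{\p_j}(R,\sC)$ as a Cartier submodule \emph{not} contained in $\p_j$. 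Hence $\sN$ is a Cartier submodule not contained in $P$, so by minimality $\uptau_\fraa(R,\sC) \subset \sN$. For ``$\supset$'': it suffices to show $\uptau_\fraa(R,\sC) \nsubset \p_i$ for each $i$, since then $\uptau_\fraa(R,\sC)$ is a Cartier submodule not contained in $\p_i$, so $\uptau_{\p_i}(R,\sC) \subset \uptau_\fraa(R,\sC)$ by minimality of the latter, and summing over $i$ gives $\sN \subset \uptau_\fraa(R,\sC)$. And indeed $\uptau_\fraa(R,\sC) \nsubset P \supset \p_i$ by its defining property, so a fortiori $\uptau_\fraa(R,\sC)\nsubset\p_i$.

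The main obstacle I anticipate is a bookkeeping one rather than a conceptual one: making sure that the test ideals $\uptau_{\p_i}(R,\sC)$ actually \emph{exist} so that the sum on the right-hand side is well-defined. This is exactly where non-degeneracy w.r.t.\ each $\p_i$ is used, invoking Smolkin's existence result (\cite[\S4.1]{SmolkinSubadditivity}, \cite[\S3.1]{Smolkinphdthesis}) under the standing hypothesis that $R$ is a domain, noetherian and $F$-finite; and the first part of the lemma is precisely what licenses that invocation. The only other subtlety is the repeated appeal to prime avoidance, which requires that the $\p_i$ be prime — guaranteed because $\fraa$ is radical — and that we are avoiding a \emph{finite} union, which holds since $\fraa$ has finitely many minimal primes. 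Everything else is the formal minimality game for the smallest Cartier submodule avoiding a given closed set.
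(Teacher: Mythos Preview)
Your argument is correct and follows essentially the same route as the paper: prime avoidance for the non-degeneracy equivalence, then the double inclusion via (i) $\sN=\sum_i\uptau_{\p_i}(R,\sC)\nsubset P$ by prime avoidance (else $\sN\subset\p_j$ for some $j$, contradicting $\uptau_{\p_j}\nsubset\p_j$), and (ii) $\uptau_\fraa\nsubset P$ forces $\uptau_\fraa\nsubset\p_i$ so $\uptau_{\p_i}\subset\uptau_\fraa$ for each $i$. The paper additionally cites \cite[Corollary 4.8]{SchwedeCentersOfFPurity} to justify that each $\p_i$ is $\sC$-compatible (so the $\uptau_{\p_i}$ are well-posed), which you implicitly use; and, like you, it invokes Smolkin's existence result for the $\uptau_{\p_i}$.
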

\begin{proof}
First, note that $\uptau_{\p_i}(R, \sC)$ exists by \cite[Section 3.1.1]{Smolkinphdthesis}. Write $P = \bigcup_i \p_i$. By definition, $\uptau_\mathfrak{a}(R, \sC)$ is the smallest $\sC$-compatible ideal not contained in $P$. If $I$ is any $\sC$-compatible ideal not contained in $P$, then it is not contained in any $\p_i$ whence $\uptau_{\p_i} \subset I$ for all $i$.  To finish the proof, assume that $\sum_i \uptau_{\p_i}(R, \sC) \subset P$. By prime avoidance, the sum is then contained in, say, $\p_1$. Then, $\uptau_{\p_1}(R, \sC) \subset \sum_i \uptau_{\p_i}(R, \sC) \subset \p_1$, which is a contradiction. Thus, $\sum_{i=1}^k \uptau_{\p_i}(R, \sC)$ is the smallest $\sC$-compatible ideal not contained in $P$ and so coincides with $\uptau_\mathfrak{a}(R, \sC)$.
\end{proof}

\begin{theorem}
\label{theo.adjointidealtransformation}
Let $f \: \Spec S \to \Spec R$ be a cover between integral schemes. Let $\mathfrak{a} \subsetneq R$ be a radical ideal with minimal primes $\mathfrak{p}_1,\ldots, \mathfrak{p}_k$, and set $\mathfrak{b}\coloneqq \sqrt{\mathfrak{a}S}$. Let $\sigma_R\:  S \to \omega_{S/R} $ be a generic isomorphism such that $T = \sigma_R(1)$ satisfies
\begin{equation} 
\label{eqn.TraceAndidealsPre}
\sqrt{\p_i S} = f^! \mathfrak{p}_i :_S T, \quad \text{for all } i=1,\ldots,k.
\end{equation}
Let $\sC$ be a Cartier $R$-algebra acting on $R$ such that $R$ is $T$-transposable and $\mathfrak{a}\subset R$ is a Cartier submodule. The following statements hold.
\begin{enumerate}
    \item The ideal $\mathfrak{b} \subset S$ is a Cartier $f^*\sC$-submodule.
    \item The pair $(R,\sC)$ is non-degenerate with respect to $\fra$ if and only if $(S,f^*\sC)$ is non-degenerate with respect to $\mathfrak{b}$. In that case, the following equality holds 
\begin{equation} \label{eqn.AdjointIdealsFIniteMorphismBIG}
T\big(f_\ast \uptau_{\mathfrak{b}}(S, f^* \sC) \big) = \uptau_{\mathfrak{a}}(R, \sC).
\end{equation}
\item If $\fra=\p$ is a prime ideal, then $(R,\sC)$ is non-degenerate with respect to $\fra$ if and only if $(S,f^*\sC)$ is non-degenerate with respect to \emph{all} prime ideals $\mathfrak{q} \subset S$ lying over $\p$. Further, for \emph{any} such prime $\q$ the following equality holds
\begin{equation} \label{eqn.AdjointIdealsFIniteMorphism}
    T\big(f_\ast \uptau_{\mathfrak{q}}(S, f^* \sC) \big) = \uptau_{\mathfrak{p}}(R, \sC).
    \end{equation}
\item If $(R,\sC)$ is purely $F$-regular along $\mathfrak{a}$, then $T$ is surjective. The converse holds provided that $(S,f^*\sC)$ is purely $F$-regular along $\mathfrak{b}$.
\end{enumerate}
\end{theorem}
\begin{proof}
For any ideal $I \subset R$, the ideal $f^!I :_S T$ equals $\{s\in S \mid T(sS) \subset I\}$ and is the largest ideal $J\subset S$ such that $T(J) \subset I$. Further,
\[(f^!I :_S T) \cap (f^!J :_S T) = (f^!I \cap f^! J ):_S T = f^!(I \cap J) :_S T\]for any two ideals $I,J \subset R$. In particular, \autoref{eqn.TraceAndidealsPre} implies
\begin{equation} \label{eqn.TraceAndideals}    
\mathfrak{b} =  f^! \mathfrak{a} :_S T.
\end{equation}

For notation ease, we assume $\sC \subset \sC_R$ and $f^*\sC \subset \sC_S$; see \autoref{rem.RemarkRelGor}. 

\emph{Proof of (a):} Next, we explain why $\mathfrak{b} \subset S$ is a $f^*\sC$-submodule. It suffices to show $\varphi^{\top}(F^e_* \mathfrak{b}) \subset \mathfrak{b}$ for all $\varphi \in \sC_e$. Note that
\[
T \big( \varphi^{\top}(F^e_* \mathfrak{b}) \big) = \varphi\big(F^e_* T(\mathfrak{b})\big) \subset \varphi(F^e_* \mathfrak{a}) \subset \mathfrak{a},
\]
where the inclusion $T(\mathfrak{b}) \subset \mathfrak{a}$ is a consequence of \autoref{eqn.TraceAndideals}, and the last one follows from $\mathfrak{a} \subset R$ being a $\sC$-submodule. Thus, $\varphi^{\top}(F^e_* \mathfrak{b}) \subset T^{-1}(\mathfrak{a})$. Since $\varphi^{\top}(F^e_* \mathfrak{b})$ is an ideal of $S$, this implies $\varphi^{\top}(F^e_* \mathfrak{b}) \subset f^! \mathfrak{a} :_S T$, and so $\varphi^{\top}(F^e_* \mathfrak{b}) \subset \mathfrak{b}$ by \autoref{eqn.TraceAndideals}.

\emph{Proof of (b) and (c):} We prove next the statements regarding non-degeneracy. We may assume that $\mathfrak{a} = \mathfrak{p}$ is prime; see \autoref{lem.ReductionOneComponent}. Let $\q_1, \ldots , \q_n$ be the primes of $S$ lying over $\p$, which are the minimal primes of $\mathfrak{b} = \sqrt{\p S}=\bigcap_i \q_i$ and are all compatible prime ideals of $(S,f^*\sC)$ by part (a). We show that $(R,\sC)$ is degenerate with respect to $\p$ if and only if $(S,f^*\sC)$ is degenerate with respect to $\sqrt{\p S}$ if and only if $(S,f^*\sC)$ is degenerate with respect to $\q_i$ \emph{for all} $i\in \{1,\ldots,n\}$. 

Suppose that $(R,\sC)$ is degenerate with respect to $\p$, that is: $\varphi(F^e_*R) \subset \p$ for all $\varphi \in \sC_e$. In particular, $T\big(\varphi^{\top}(F^e_* S) \big)= \varphi\big(F^e_*T(S)\big) \subset \p$ for all $\varphi \in \sC_e$. In other words, $\varphi^{\top}(F^e_* S) \subset T^{-1}(\p)$ for all $\varphi \in \sC_e$. Since $\varphi^{\top}(F^e_* S) \subset S$ is an ideal, this implies that $\varphi^{\top}(F^e_* S)\subset f^! \p : T = \sqrt{\p S}=\bigcap_i\q_i$ for all $\varphi \in \sC_e$. Therefore, since $f^*\sC$ is generated as a right $S$-module by maps $\varphi^{\top}$, we conclude that $(S,f^*\sC)$ is degenerate with respect to \emph{all} $\q_i$. 

Suppose that $(S,f^*\sC)$ is degenerate with respect to $\sqrt{\p S} = f^!\p:T$. That is, $(S,f^*\sC)$ is degenerate with respect to \emph{some} prime lying over $\p$, say $\q_{i_0}$. In particular, $\varphi^{\top}(F^e_*S) \subset \q_{i_0}$ for all $\varphi \in \sC_e$. Therefore, 
\[
\varphi^{\top}\left(F^e_* \bigcap_{i \neq i_0}\q_i\right)\subset \q_{i_0} \cap \bigcap_{i \neq i_0}\q_i = \sqrt{\p S} = f^!\p:T
\]
for all $\varphi \in \sC_e$. For notation ease, let us set $S_0 \coloneqq \bigcap_{i \neq i_0}\q_i$.
In this way, for all $\varphi \in \sC_e$, we have $T\big(\varphi^{\top}(F^e_*S_0)\big) \subset \p$ and so $\varphi\big(F^e_*T(S_0)\big) \subset \p$ as $\varphi \circ F^e_* T= T \circ \varphi^{\top}$.  However, $T(S_0) \not\subset \p$ as $f^!\p:T = \sqrt{\p S} \subsetneq S_0$. Thus, we can take an element $r \in T(S_0)\smallsetminus \p$ so that $r^qR \subset T(S_0)$. Then, for all $\varphi \in \sC_e$, we have: 
\[
r\varphi(F^e_*R)=\varphi(F^e_*r^qR)\subset \varphi\big(F^e_*T(S_0)\big) \subset \p,\]
and so $\varphi(F^e_*R) \subset \p$ as $r\notin \p$. Therefore, $(R,\sC)$ is degenerate with respect to $\p$.

With the above in place, we explain why \autoref{eqn.AdjointIdealsFIniteMorphism} and so \autoref{eqn.AdjointIdealsFIniteMorphismBIG} hold. In fact, we show that 
\[
T\big(f_\ast \uptau_{\mathfrak{q}}(S, f^* \sC) \big) = \uptau_{\mathfrak{p}}(R, \sC)
\] 
holds assuming both adjoint test ideals exist, where $\mathfrak{p}$ is prime and $\q \subset S$ is any prime ideal lying over $\p$. We start with the containment ``$\supset$.'' It suffices to show that $T\big(f_\ast \uptau_{\mathfrak{q}}(S, f^* \sC) \big)$ is $\varphi$-compatible for all $\varphi \in \sC_e$ and that it is not contained in $\p$. The compatibility follows once again using that $\uptau_{\mathfrak{q}}(S, f^* \sC) $ is $\varphi^{\top}$-compatible for all $\varphi \in \sC_e$ and employing $T \circ \varphi^{\top} = \varphi \circ T$. On the other hand, $T\big(f_\ast \uptau_{\mathfrak{q}}(S, f^* \sC) \big)$ cannot be contained in $\p$ because else $\uptau_{\mathfrak{q}}(S, f^*\sC) \subset \sqrt{\p S} \subset \q$ by \autoref{eqn.TraceAndidealsPre}, which contradicts its non-degeneracy with respect to $\mathfrak{q}$.

Conversely, for the inclusion ``$\subset$,'' we use the description of adjoint ideals in terms of test elements; see \cite[3.1.11, 3.1.12, 3.1.16]{Smolkinphdthesis}.
Let $c \in \uptau_\q(S, f^\ast \sC) \smallsetminus \q$ so that $c$ is an $f^\ast \sC$-test element along $\q$ using the terminology of \cite[Definition 3.1.15]{Smolkinphdthesis}. Hence, we can write
\[
\uptau_{\mathfrak{q}}(S, f^* \sC) = \sum_e \sum_{\varphi \in \sC_e} \varphi^{\top}(F^e_*cS).
\]Hitting this equality by $T$ and using $T \circ \varphi^\top = \varphi \circ F^e_* T$, we see that it suffices to show $T(cS) \subset \uptau_{\p}(R, \sC)$. Since $c \notin \q$, we have $T(c \cdot S) \not\subset \p$  by \autoref{eqn.TraceAndidealsPre}. In particular, we find $s \in S$ such that $T(cs) \notin \p$. Take $s'\in S$, $r \in R \smallsetminus \p$ arbitrary. First of all, $r \notin \q$ as $\q$ contracts to $\p$ along $R \to S$. Then, there are $s_1,\ldots, s_n \in S$ and $\varphi_1, \ldots, \varphi_n \in \sC_{e}$; for some $e>0$, such that $\sum_i \varphi_i^{\top}\big(F^e_*(s_i\cdot r)\big) = s' c$, for $c$ is an $f^*\sC$-test element along $\q$. In particular, $\sum_i \varphi_i\big(F^{e}_*T(s_i)r\big)=T(s'c)$. In other words, for all $s' \in S$, $r\in R \smallsetminus \p$, there exists $\varphi \in \sC_e$ for some $e>0$ such that $\varphi(F^e_*r)=T(s'c)$. Therefore, specializing to $s'=s$, we have that $T(sc)$ is a $\sC$-test element along $\p$. Hence,
\begin{equation} \label{eqn.AdjointIdealDonstairs} \uptau_{\p}(R,\sC) = \sum_e \sum_{\varphi \in \sC_e} \varphi\big(F^e_* T(s c)R\big).\end{equation}
Now, let $s' \in S$ be arbitrary, and let $r=T(sc)$. By our previous observation, we may find $\varphi \in \sC_e$ such that $\varphi(F^e_*r)=T(s'c)$, and so $T(s'c) \in \uptau_{\p}(R,\sC)$ as a consequence of \autoref{eqn.AdjointIdealDonstairs}. Thus, we have established $T(cS) \subset \uptau_\p(R, \sC)$, as desired.

\emph{Proof of (d):} This follows directly from \autoref{eqn.AdjointIdealsFIniteMorphismBIG}.
\end{proof}


As an application of \autoref{theo.adjointidealtransformation} and the restriction theorem for adjoint ideals, we have: 

\begin{corollary}
Work in the setup of \autoref{theo.adjointidealtransformation}. If $\mathfrak{a}=\p$ and $\mathfrak{b}=\q$ are prime, then
\[
\overline{T}\Big(\overline{f}_*\uptau\big(S/\q, \overline{f}^*\overline{\sC}\big)\Big) = \uptau\big(R/\p, \overline{\sC}\big) 
\]
where $\overline{T} \: S/\q \to R/\p$ is the restriction of $T \: S \to R$ given by the inclusion $T(\q) \subset \p$, and $\overline{f} : \Spec (S/\q) \to \Spec (R / \p) $ is the spectrum of the induced homomorphism $R/\p \to S/\q$.
\end{corollary}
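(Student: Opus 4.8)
The statement combines the transformation rule for adjoint ideals (\autoref{theo.adjointidealtransformation}, applied with $\mathfrak a = \p$ and $\mathfrak b = \q$ both prime) with a restriction-type theorem for adjoint ideals along a prime, which identifies $\uptau_{\p}(R,\sC)$ with the preimage in $R$ of $\uptau(R/\p, \overline{\sC})$ (and similarly upstairs). So the plan is: first invoke \autoref{theo.adjointidealtransformation} to get $T\big(f_\ast \uptau_{\q}(S, f^\ast \sC)\big) = \uptau_{\p}(R, \sC)$, noting that the hypotheses of that theorem are exactly those being assumed here; then translate both sides through the restriction theorem and the commutative square relating $T$ and $\overline T$.

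\textbf{Step 1: Reduce to the quotient rings.} Recall (from the adjoint ideal theory of \cite{Smolkinphdthesis}, as cited before \autoref{lem.ReductionOneComponent}, or directly from the construction) the restriction theorem: if $\p \subset R$ is a $\sC$-compatible prime and $(R,\sC)$ is non-degenerate with respect to $\p$, then $\uptau_{\p}(R,\sC)$ is the preimage under $R \twoheadrightarrow R/\p$ of $\uptau(R/\p, \overline{\sC})$, where $\overline\sC$ is the induced Cartier algebra on $R/\p$. The same statement applies upstairs to $\q \subset S$, $(S, f^\ast\sC)$, and $\overline{f^\ast \sC}$. By \autoref{cla.TranspositionRestriction}'s analogue in this adjoint setting — more precisely, by reducing \autoref{eqn.TransposabilityEquation} modulo $\q$ exactly as in the proof of \autoref{thm.TransRuleSplittingRatios} — the induced algebra $\overline{f^\ast \sC}$ on $S/\q$ coincides with $\overline f{}^\ast \overline\sC$, the pullback along $\overline f$ of the induced algebra $\overline\sC$ on $R/\p$, so the notation in the statement is consistent.

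\textbf{Step 2: Push the equality through the quotient.} Write $\pi_R \: R \to R/\p$ and $\pi_S \: S \to S/\q$ for the quotient maps. Since $T(\q) \subset \p$ (a consequence of \autoref{eqn.TraceAndidealsPre}, exactly as in the proof of \autoref{theo.adjointidealtransformation} where $\mathfrak b = f^!\mathfrak a :_S T$), the map $\overline T \: S/\q \to R/\p$ is well-defined and the square $\pi_R \circ T = \overline T \circ \pi_S$ commutes. Applying $\pi_R$ to the equality $T\big(f_\ast \uptau_{\q}(S, f^\ast \sC)\big) = \uptau_{\p}(R, \sC)$ from \autoref{theo.adjointidealtransformation} and using that $\uptau_{\p}(R,\sC) = \pi_R^{-1}\big(\uptau(R/\p,\overline\sC)\big)$ surjects onto $\uptau(R/\p,\overline\sC)$, the right-hand side becomes $\uptau(R/\p, \overline\sC)$; the left-hand side becomes $\pi_R\big(T(f_\ast \uptau_{\q}(S,f^\ast\sC))\big) = \overline T\big(\pi_S(\uptau_{\q}(S,f^\ast\sC))\big) = \overline T\big(\uptau(S/\q, \overline f{}^\ast\overline\sC)\big)$, again using that $\uptau_{\q}(S,f^\ast\sC)$ surjects onto $\uptau(S/\q, \overline f{}^\ast\overline\sC)$ under $\pi_S$. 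Combining these yields the claimed formula.

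\textbf{Main obstacle.} The genuine content is entirely packaged into \autoref{theo.adjointidealtransformation} and the restriction theorem for adjoint ideals; the step here is bookkeeping. The one point requiring a little care is verifying that the Cartier-algebra structures match after passing to quotients — i.e. that $\overline{f^\ast\sC} = \overline f{}^\ast\overline\sC$ as Cartier algebras on $S/\q$ — so that the right object $\uptau(S/\q, \overline f{}^\ast\overline\sC)$ appears. This is handled by reducing the defining equation \autoref{eqn.TransposabilityEquation} of the transpose modulo $\q$, mirroring \autoref{cla.TranspositionRestriction}, together with the fact (used throughout \autoref{theo.adjointidealtransformation}) that $\p$ and $\q$ are $\sC$- respectively $f^\ast\sC$-compatible so that the induced algebras exist. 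No new estimates or limits are needed.
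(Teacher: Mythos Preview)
Your proposal is correct and follows essentially the same route as the paper: reduce \autoref{eqn.AdjointIdealsFIniteMorphism} modulo $\p$, invoke Smolkin's restriction theorem (\cite[Proposition 3.1.14]{Smolkinphdthesis}) to identify the images of $\uptau_{\p}(R,\sC)$ and $\uptau_{\q}(S,f^*\sC)$ with the test ideals on the quotients, and verify $\overline{f^*\sC} = \overline{f}{}^*\overline{\sC}$ by reducing \autoref{eqn.TransposabilityEquation} modulo $\q$ as in \autoref{cla.TranspositionRestriction}. One minor remark: you phrase the restriction theorem as a preimage statement but only use the surjection $\pi_R\big(\uptau_{\p}(R,\sC)\big) = \uptau(R/\p,\overline{\sC})$, which is exactly what \cite[Proposition 3.1.14]{Smolkinphdthesis} provides and all that is needed.
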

\begin{proof}
Reduce \autoref{eqn.AdjointIdealsFIniteMorphism} modulo $\p$ and use \cite[Proposition 3.1.14]{Smolkinphdthesis} as well as $\overline{f}^*\overline{\sC} = \overline{f^*\sC}$, which follows as in \autoref{cla.TranspositionRestriction}.
\end{proof}

\begin{remark}
Let us discuss the meaning of \autoref{eqn.TraceAndidealsPre} for small heights of $\p$ when $T = \Tr_{S/R}$ and $R \subset S$ is separable extension of normal domains. We always have $\sqrt{\mathfrak{p} S} \subset f^! \mathfrak{p} :_S \Tr_{S/R}$ since $\Tr_{S/R}\big(\sqrt{\mathfrak{p}S}\big) \subset \mathfrak{p}$; see \cite[Lemma 9]{SpeyerFrobeniusSplit}. If $\height \p =0$, then $\p=0$ and $\sqrt{\p S}=0$ as we assume $R$ and $S$ to be integral. Then, \autoref{eqn.TraceAndidealsPre} means that $\sigma_R \: S \to \omega_{S/R}$ is injective. This, however, is a consequence of $\sigma_R$ being a generic isomorphism and $S$ satisfying $\mathbf{S}_1$. Hence, in height $0$, \autoref{eqn.AdjointIdealsFIniteMorphismBIG} recovers \autoref{eqn.TransformationRuleTestIdelasRingsTrans} in the integral case. Let us suppose now that $\height \p = 1 $. We explain what $\sqrt{\mathfrak{p} S} \supset f^! \mathfrak{p} :_S \Tr_{S/R}$ means in terms of divisors. Note that $\height \mathfrak{p} = 1$ amounts to saying that $\mathfrak{p} = R(-P)$ with $P$ is an effective reduced divisor on $\Spec R$. Similarly, $\sqrt{\mathfrak{p} S}=S(-Q)$ for some effective reduced divisor $Q$ on $\Spec S$. Thus, $s \in f^! \mathfrak{p} :_S \Tr_{S/R}$ if and only if
\[
\Tr_{S/R} \cdot s \in f^! \mathfrak{p} = \Hom_R\big(S, R(-P)\big) = \Hom_R\big(S \otimes_R R(P), R\big) = \Hom_R\big(S(f^*P),R\big),
\]
which means that $S(f^*P) \subset S(\Ram + \Div s)$, equivalently $s \in S(\Ram - f^* P)=S(-P^*)$ where $P^* \coloneqq f^*P - \Ram$. Hence, $\mathfrak{q} \supset f^! \mathfrak{p} :_S \Tr_{S/R}$ is equivalent to $P^* \geq Q$. The same applies for general effective reduced divisors. Concretely, let $\mathfrak{a}\coloneqq R(-D)$ and $\mathfrak{b}\coloneqq S(-E)$ for reduced effective divisors $D$ and $E$ on $\Spec R$ and $\Spec S$; respectively, such that $E=(f^{-1}D)_{\mathrm{red}}$. Then, $\mathfrak{b} = f^! \mathfrak{a} :_S \Tr_{S/R}$ is equivalent to $f^*D - \Ram = E$. Using the terminology \cite[Definition 3.9]{SchwedeTuckerTestIdealFiniteMaps}, such divisorial equality is equivalent to having that for all height-$1$ prime ideals $\q' \subset S$ the DVRs extension $R_{\mathfrak{q'} \cap R}\subset S_{\mathfrak{q'}}$ is: \'etale if $\mathfrak{q'}$ does not support $E$ and tamely ramified otherwise \cite[IV, Proposition 2.2]{Hartshorne}; see \cite[Remark 2.9]{Carvajalphdthesis}, \cite[Remark 4.6]{SchwedeTuckerTestIdealFiniteMaps}.
\end{remark}

\begin{remark}
In the forthcoming preprint \cite{CarvajalRojasFayolle}, Anne Fayolle and the first named author expand upon the ideas behind \autoref{theo.adjointidealtransformation} and describe how centers of $F$-purity behave under transpositions and finite covers. Moreover, they clarify further the prominent role played by the condition \autoref{eqn.TraceAndidealsPre} in their theory. In particular, they provide better conceptual interpretations for this condition as well as numerical rank-conditions it is equivalent to. For instance, if $\sigma$ is an isomorphism over $\p$, then \autoref{eqn.TraceAndidealsPre} is equivalent to the residual degree $\sum_{i=1}^k \big[\kappa(\q_i):\kappa(\p)\big]$ being equal to the free rank of $S_{\p}$ as an $R_{\p}$-module, which will let us see that \autoref{theo.adjointidealtransformation} holds for $T=\Tr$ if $f$ is 
\'etale over $\p$.
\end{remark}

\section{Schwede--Tucker's transposability criterion revisited} \label{sec.SchwedeTuckerFaithfullyFlat}
Let $f \: Y \to X$ be a finite cover of normal integral schemes and $T$ be a nonzero global section of $\omega_{Y/X}$. Schwede--Tucker's transposability criterion establishes that $\sC^{\top}_{e,X} = \bigl\{\varphi \in \sC_{e,X} \mid f^* \Delta_{\varphi} \geq \Ram_T \bigr\}$. We aim to describe $\sC^{\top}_{e,X}$ in terms of divisors on $X$ rather than divisors on $Y$. To this end, we need the following facts regarding norm functions.
\subsection{Norm functions} For a detailed exposition on norms, see \cite[\href{https://stacks.math.columbia.edu/tag/0BCX}{Tag 0BCX}]{stacks-project}. A multiplicative function $\Norm_f \: f_* \sO_Y \to \sO_X$ is a \emph{norm of degree $d$} if  $\Norm_f \circ f^{\#}\:\sO_X \to f_*\sO_Y \to \sO_X$ is given by raising local sections to the $d$-th power and whenever $v \in \sO_Y\bigl(f^{-1} (U)\bigr)$ vanishes at $y \in f^{-1} (U)$ so does $N_f(v)$ at $f(y) \in U$. Let $K$ and $L$ be the fields of functions of $X$ and $Y$ respectively. Then, there exists a norm $\Norm_f \: f_* \sO_Y \to \sO_X$ of degree $n \coloneqq [L:K]$. Indeed, we may take $\Norm_f$ to be the integral restriction of the field extension norm $\Norm_{L/K} \: L \to K$ \cite[\href{https://stacks.math.columbia.edu/tag/0BD3}{Tag 0BD3}]{stacks-project}. We often use the notation $\Norm_{f}=\Norm_{Y/X}$ in that case.

Applying $H^1(X,-)$ to $\Norm_f \: f_*\sO_Y^{\times} \to \sO_X^{\times}$ gives a homomorphism $\Norm_f\:\Pic Y \to \Pic X$ such that the composition $\Norm_f \circ f^* \:\Pic X \to \Pic Y \to \Pic X$ is multiplication-by-$n$ (in additive notation); see \cite[\href{https://stacks.math.columbia.edu/tag/0BCY}{Tag 0BCY}]{stacks-project}. This shows that if $\sL$ is nontorsion, then so is $f^* \sL$. Moreover, if $\sL \cong \sO_Y(D) \subset L$ for some Cartier divisor $D$, then $\Norm_f (\sL) $ is realized as the rank $1$ subsheaf of $K$ given by $ \Norm_{L/K}\bigl(\sO_Y(D) \bigr)$. We define $\Norm_{f}(D)$ to be the Cartier divisor on $X$ determined by the equality $ \Norm_{L/K}\bigl(\sO_Y(D) \bigr) = \sO_X\bigl(\Norm_f(D)\bigr)$. This extends to a homomorphism $\Norm_f \: \Cl Y \to \Cl X$ as follows: $\Norm_{L/K}\bigl(\sO_Y(D) \bigr) = \sO_X\bigl(\Norm_f(D)\bigr)$ for all Weil divisor $D$ on $Y$. It is worth noticing that $\Norm_f \: \DIV Y \to \DIV X$ is given by $\Norm_f \: \Pic f^{-1} (X_{\mathrm{reg}}) \to \Pic X_{\mathrm{reg}}$. Finally, we define $\Norm_f$ on $\bQ$-divisors by $\Norm_f \otimes_{\bZ} \bQ$. 

\begin{lemma} \label{lem.EffectivenessAndNorms}
Let $f \: Y \to X$ be a finite cover of normal integral schemes, let $\Delta$ be a $\bQ$-divisor on $Y$. Then, $\Norm_f (\Delta) \geq 0$ if $\Delta \geq 0$.
\end{lemma}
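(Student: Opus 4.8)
The plan is to reduce the statement to the case of a prime divisor and then to a purely local (DVR) computation, where effectiveness of $\Norm_f(\Delta)$ can be checked valuation-by-valuation. First I would observe that both sides of the asserted implication are $\bQ$-linear in $\Delta$ in the following weak sense: since $\Norm_f$ is defined on $\bQ$-divisors by $\Norm_f \otimes_{\bZ} \bQ$, it suffices to prove the claim for \emph{integral} effective divisors $\Delta$, because clearing denominators replaces $\Delta$ by a positive integer multiple and multiplies $\Norm_f(\Delta)$ by the same integer. Writing an integral effective $\Delta = \sum_i a_i Q_i$ with $a_i > 0$ and $Q_i$ prime divisors on $Y$, and using that $\Norm_f$ is a homomorphism $\Cl Y \to \Cl X$, we have $\Norm_f(\Delta) = \sum_i a_i \Norm_f(Q_i)$; hence it is enough to show $\Norm_f(Q) \geq 0$ for a single prime divisor $Q$ on $Y$. (One should be slightly careful here: $\Norm_f(\Delta)$ is only a divisor class a priori, but the construction in the text realizes it concretely via $\Norm_{L/K}(\sO_Y(\Delta)) = \sO_X(\Norm_f(\Delta))$, so the effectivity statement is about this distinguished representative, which is additive on the level of fractional ideals.)

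Next I would localize at a prime divisor $P$ on $X$. The divisor $\Norm_f(Q)$ is effective if and only if $\ord_P\big(\Norm_f(Q)\big) \geq 0$ for every prime divisor $P$ on $X$, and this coefficient depends only on the semilocal ring of $f$ over the generic point of $P$: concretely, on the extension of DVRs $R_P = \sO_{X,P} \hookrightarrow \sO_{Y}$ localized at the (finitely many) height-one primes $Q_1 = Q, Q_2, \dots, Q_m$ of $Y$ lying over $P$. Writing $\sO_{Y,P} \coloneqq \sO_Y \otimes_{R_P} (\text{something}) $—more precisely the semilocalization of $\sO_Y$ at those $Q_j$—the norm $\Norm_{L/K}$ restricts to the norm from this semilocal Dedekind (in fact principal ideal) domain down to $R_P$. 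Since $\sO_Y(Q)$ localized here is the fractional ideal $\mathfrak{q}^{-1}$ with $\mathfrak{q}$ the maximal ideal at $Q$, and $\mathfrak{q}^{-1} \supset \sO$, applying the norm of fractional ideals (which is inclusion-preserving, being multiplicative and sending $\sO$ to $\sO$) gives $\Norm(\mathfrak{q}^{-1}) \supset \Norm(\sO) = R_P$, i.e. $\Norm_f(Q)$ has $\ord_P \geq 0$ at $P$. Since $P$ was arbitrary, $\Norm_f(Q) \geq 0$.

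The main obstacle I anticipate is bookkeeping around the fact that $\Norm_f$ on divisor \emph{classes} versus the distinguished \emph{fractional-ideal} representative must be handled consistently, together with the behavior of norms of fractional ideals in a semilocal (rather than local) Dedekind setting: one needs that the norm of ideals is multiplicative and monotone with respect to inclusion, which for a finite extension of Dedekind domains follows from the standard theory, but in our generality ($f$ a finite cover of normal integral schemes, with $Y \to X$ possibly of large degree and wildly ramified) one should cite or recall that $\Norm_{L/K}$ restricts to a well-defined norm on fractional ideals compatible with the $\sO_X$-module structure—this is exactly the content invoked from \cite[\href{https://stacks.math.columbia.edu/tag/0BD3}{Tag 0BD3}]{stacks-project} and \cite[\href{https://stacks.math.columbia.edu/tag/0BCX}{Tag 0BCX}]{stacks-project}. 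Once that compatibility is in hand, the monotonicity $\mathfrak{a} \subset \mathfrak{b} \Rightarrow \Norm(\mathfrak{a}) \subset \Norm(\mathfrak{b})$ is immediate from multiplicativity applied to $\mathfrak{b} = \mathfrak{a} \cdot (\mathfrak{a}^{-1}\mathfrak{b})$ with $\mathfrak{a}^{-1}\mathfrak{b} \subset \sO_Y$, and the proof concludes as above. Alternatively, and perhaps more cleanly, one can avoid fractional ideals entirely: reduce to $\Delta$ effective integral, pick a rational function $v \in L$ with $\divisor_Y(v) + \Delta \geq 0$ generating $\sO_Y(\Delta)$ locally; but since effectivity is a codimension-one statement it suffices to test after localizing, where $\sO_Y(\Delta)$ becomes free, generated by some $v^{-1}$ with $v \in \sO_{Y,Q_j}$ for all $j$ over a given $P$; then $\Norm_{L/K}(v) \in R_P$ by the defining property of a norm (local sections mapping to local sections / non-vanishing being preserved), so $\Norm_f(\Delta)$, generated by $\Norm_{L/K}(v)^{-1}$, satisfies $\divisor_X\big(\Norm_{L/K}(v)\big) \geq 0$ near $P$, whence $\Norm_f(\Delta) \geq 0$ near $P$, and therefore everywhere.
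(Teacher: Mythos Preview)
Your proposal is correct, and your ``alternative'' paragraph at the end is essentially the paper's argument. The paper, however, does it in two sentences without any of the preliminary reductions: after passing to integral $\Delta$, it observes that effectiveness of a divisor $D$ on a normal integral scheme is equivalent to the inclusion $\sO \subset \sO(D)$ inside the function field, and that since $1 \in \sO_Y \subset \sO_Y(\Delta)$ one gets $1 = \Norm_{L/K}(1) \in \Norm_{L/K}\bigl(\sO_Y(\Delta)\bigr) = \sO_X\bigl(\Norm_f(\Delta)\bigr)$, i.e.\ $\sO_X \subset \sO_X\bigl(\Norm_f(\Delta)\bigr)$. Your reduction to prime divisors and explicit localization at codimension-$1$ points of $X$ is unnecessary scaffolding---the global inclusion criterion for effectiveness already packages all of that.

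One small slip worth flagging: in your monotonicity argument you write $\mathfrak{a}^{-1}\mathfrak{b} \subset \sO_Y$ when $\mathfrak{a} \subset \mathfrak{b}$, but the containment goes the other way, $\mathfrak{a}^{-1}\mathfrak{b} \supset \sO_Y$. More importantly, deducing $\Norm(\mathfrak{a}^{-1}\mathfrak{b}) \supset \sO_X$ from $\mathfrak{a}^{-1}\mathfrak{b} \supset \sO_Y$ is precisely the statement you are trying to prove, so that route is circular as written. Your alternative via a local generator $v^{-1}$ with $v \in \sO_Y$ and $\Norm_{L/K}(v) \in \sO_X$ avoids this and is the clean argument---it is exactly what the paper's two-line proof is doing, just phrased locally rather than globally.
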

\begin{proof}
We may assume that $\Delta$ is integral. Effectiveness of a divisor $D$ on an integral normal scheme $S$ is equivalent to the inclusion $\sO_S \subset \sO_S(D)$ in $K(S)$.  Thus, if $\O_Y \subset \sO_Y(\Delta)$ then $\sO_X \subset \sO_X\bigl(\Norm_f(\Delta)\bigr)$ as $\Norm_f(\sO_Y) = \sO_X$ (since $\sO_Y = f^*\sO_X$, the norm of $\sO_Y$ is $\sO_X^n = \sO_X$).
\end{proof}

\begin{definition} \label{def.relativelytorsion}With notation as in \autoref{lem.EffectivenessAndNorms}, a divisor $D$ on $Y$ is $f$-\emph{torsion} if $mD = f^* D'$ for some $m \neq 0$ and some divisor $D'$ on $X$. \end{definition}

\begin{proposition} \label{pro.RelativeTorsion}With notation as in \autoref{lem.EffectivenessAndNorms}, a divisor $D$ on $Y$ is $f$-torsion if and only if $f^\ast \Norm_f(D) = n \cdot D$.\end{proposition}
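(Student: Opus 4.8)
The plan is to exploit the two structural identities recorded just above the statement — that $\Norm_f \colon \DIV Y \to \DIV X$ is a group homomorphism and that $\Norm_f \circ f^* = n\cdot\id$ — together with the fact that $\DIV Y$, being free abelian on the prime divisors of $Y$, is torsion free. With these in hand the equivalence is a two‑line manipulation in each direction.

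For the implication ``$\Leftarrow$'': if $f^* \Norm_f(D) = nD$, then taking $m = n \neq 0$ and $D' \coloneqq \Norm_f(D)$ we get $mD = f^* D'$, so $D$ is $f$-torsion by \autoref{def.relativelytorsion}. For ``$\Rightarrow$'': suppose $mD = f^* D'$ with $m \neq 0$ and $D'$ a divisor on $X$. First I would apply the norm homomorphism, obtaining $m\,\Norm_f(D) = \Norm_f(f^* D') = n D'$, where the second equality is the composition formula $\Norm_f \circ f^* = n\cdot\id$ on $\DIV X$. Then I would pull back along $f$: since $f^* \colon \DIV X \to \DIV Y$ is additive (using normality of $X$, restricting to the regular locus where divisors are Cartier), this gives $m\, f^*\Norm_f(D) = n\, f^* D' = n m D$, hence $m\bigl(f^*\Norm_f(D) - nD\bigr) = 0$ in $\DIV Y$. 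As $\DIV Y$ is torsion free and $m \neq 0$, we conclude $f^*\Norm_f(D) = nD$.

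The only point requiring care — and hence the step I would state most explicitly — is that both identities must be invoked at the level of genuine divisor groups, not divisor classes. Additivity of $\Norm_f$ on $\DIV Y$ is immediate from multiplicativity of $\Norm_{L/K}$; and $\Norm_f \circ f^* = n\cdot\id$ on $\DIV X$ reduces, after restricting to $X_{\mathrm{reg}}$ (where everything is Cartier and $\DIV X = \DIV X_{\mathrm{reg}}$ by normality), to the observation that a local equation $g \in K^{\times}$ of $f^* D'$ has norm $\Norm_{L/K}(g) = g^{[L:K]} = g^{n}$. No deeper input is needed; the whole content is the torsion‑freeness argument above.
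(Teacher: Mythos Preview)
Your proof is correct and follows essentially the same route as the paper: apply $\Norm_f$ to $mD = f^*D'$ to get $m\Norm_f(D) = nD'$, pull back to get $m f^*\Norm_f(D) = nmD$, and cancel $m$. Your explicit justification for the cancellation via torsion-freeness of $\DIV Y$ (where the paper simply writes ``dividing by $m$'') and your remark that the identities must hold at the level of divisors rather than classes are welcome clarifications, but the argument is the same.
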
\begin{proof}Clearly, if $n \cdot D = f^\ast \Norm_f(D)$ then $D$ is $f$-torsion. Conversely, if $m D  = f^\ast D'$ for some $m \neq 0$, then applying $\Norm_f$ gives $m N_f(D) = n D'$. Pulling back yields $m f^\ast N_f(D) = n f^\ast D' = mn D$, and dividing by $m$ gives $f^\ast \Norm_f(D) = n \cdot D$.\end{proof}

\begin{lemma} \label{lem.EffectivenessAndNormsII}
With notation as in \autoref{lem.EffectivenessAndNorms}, let $D$ be an effective divisor on $Y$. Then, $f^*\Norm_f (D) \geq k \cdot D $ for some integer $1\leq k\leq n$. Further, if $f$ is generically Galois and $\sigma\big(\sO_{Y}(D)\big) \subset \sO_Y(D)$ for all $\sigma \in \Gal(L/K)$, we may take $k=n$.
\end{lemma}
\begin{proof}
Since the effectiveness and triviality of a divisor can be checked at finitely many codimension $1$ points, we may assume that $X = \Spec R$ is the spectrum of a DVR and $Y=\Spec S$ is the spectrum of a semi-local Dedekind domain and so a PID. Indeed, once we have  $1 \leq k_P \leq n$ that works for a prime component $P$ of $D$, we may take $k = \min_{P} \{k_P\}$. In particular, we may write $D =\Div_S s $ for some $s \in S^{\times}$. Further, $\Norm_f (D) = \Div_R \Norm_{L/K}(s)$. By setting $r\coloneqq \Norm_{L/K}(s) \in R^{\times}$, we are required to prove that $f^*\Div_R r = \Div_S r \geq k \cdot \Div_S s = \Div_S s^k$ for some integer $1\leq k \leq n$. In other words, we must prove that $r/s^k \in L$ belongs to $S$ for some integer $k \in [1,n]$ (provided that $s \in S$). Note that if $t^m+r_{m-1}t^{m-1}+\cdots +r_1t +r_0 \in R[t]$ is the minimal polynomial of $s$ then $r=(-1)^n r_0^{n/m}$, where $n = [L:K]$. Indeed,
\[r = \Norm_{L/K}(s) = \Norm_{K(s)/K}\bigl(\Norm_{L/K(s)}(s)\bigr) = \Norm_{K(s)/K}\bigl(s^{n/m}\bigr) = \Norm_{K(s)/K}(s)^{n/m}
,\]and $\Norm_{K(s)/K}(s)=(-1)^m r_0$, $n/m=\bigl[L:K(s)\bigr]$. Next, use the equation
\[
s^m+r_{m-1}s^{m-1}+\cdots +r_1 s  +r_0 = 0
\]
to find $s' \in R[s]$ such that $s^l s' = r_0$ with $l$ as large as possible. Also, notice that $1 \leq l \leq m \leq n$. Hence, we conclude that if $k=l(n/m)$ then $s^k$ divides $r$; as needed.
\end{proof}

\subsection{Transposability along finite covers and branching divisors}
As in the previous section $f \: Y \to X$ is a finite cover of normal integral schemes and $T$ a nonzero global section of $\omega_{Y/X}$. We think of $\Ram_T \sim K_{Y/X}$ as an effective divisor measuring the failure of $T$ in generating $\omega_{Y/X}$ in codimension $1$ and call it the \emph{ramification divisor of $T$}. In fact, $\sO_Y(\Ram_T) \to \omega_{Y/X}$, $v \mapsto T(v \cdot -)$, is an isomorphism of $\sO_Y$-modules. By using norms, we may define a divisor on $X$ measuring such failure. 

\begin{definition} \label{def.BranchingDivisor}
The \emph{branching divisor of $T$} is $\Branch_T = \Norm_{Y/X}(\Ram_T)$. If $f\:Y \to X$ is separable and $T= \Tr_{Y/X}$, we write $\mathfrak{R}=\mathfrak{R}_{Y/X}=\Ram_T$, $\mathfrak{B}=\mathfrak{B}_{Y/X}=\Branch_T$ and refer to them as the \emph{ramification divisor} and \emph{branching divisor} of $f$; respectively.
\end{definition}

\begin{lemma} \label{lem.SupportOfBranch}
With notation as in \autoref{def.BranchingDivisor}, $\Branch_T$ is supported at prime divisors whose generic points are images of the generic point of some prime divisor supporting $\Ram_T$.
\end{lemma}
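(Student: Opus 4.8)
The statement is about supports of divisors under the norm map, so the plan is to reduce to a purely local, codimension-one computation and then invoke the definition of $\Norm_{Y/X}$ together with the defining property of a norm function recalled at the start of \autoref{sec.SchwedeTuckerFaithfullyFlat}. First I would recall that $\Branch_T = \Norm_{Y/X}(\Ram_T)$ and that $\Norm_{Y/X}$ is defined by $\Norm_{L/K}\bigl(\sO_Y(D)\bigr) = \sO_X\bigl(\Norm_f(D)\bigr)$ on Weil divisors, being realized on the level of $\Pic f^{-1}(X_{\mathrm{reg}}) \to \Pic X_{\mathrm{reg}}$. Hence the support of $\Branch_T$ is detected at height-$1$ primes $\p \subset R$, and it suffices to show that if $\p$ is \emph{not} the image of any generic point of a prime divisor in $\supp \Ram_T$, then $\Branch_T$ has coefficient zero along $\p$ (the converse inclusion—that such a $\p$ can occur in the support—is not asserted, only that nothing outside this locus appears).

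The key step is the following local reduction. Localize $X$ at the height-$1$ prime $\p$, so $R_{\p}$ is a DVR and $S \otimes_R R_{\p}$ is a semilocal Dedekind domain, exactly as in the proof of \autoref{lem.EffectivenessAndNormsII}. If no prime of $S$ lying over $\p$ supports $\Ram_T$, then $(\Ram_T)_{\p} = 0$, i.e.\ $\sO_Y(\Ram_T)_{\p} = \sO_{Y_{\p}}$ as a fractional ideal, so $\Ram_T$ is represented near $f^{-1}(\p)$ by a unit. Then $\Norm_{L/K}$ applied to this unit is a unit of $R_{\p}$, whence $\Norm_{Y/X}(\Ram_T)$ has coefficient zero along $\p$. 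Equivalently, in the language already set up: $\sO_Y \subset \sO_Y(\Ram_T)$ with equality in codimension $1$ away from $\supp\Ram_T$, and $\Norm_{Y/X}$ sends this equality of fractional ideals to $\sO_X = \sO_X(\Branch_T)$ in the corresponding codimension-$1$ neighborhood, by the multiplicativity of $\Norm_{L/K}$ and the fact that $\Norm_{Y/X}(\sO_Y) = \sO_X$ used in \autoref{lem.EffectivenessAndNorms}. Taking the contrapositive gives precisely the statement: every prime divisor $P'$ with nonzero coefficient in $\Branch_T$ must have its generic point equal to the image of the generic point of some prime divisor $P$ in $\supp \Ram_T$.

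I would phrase this cleanly by working one prime component of $\Ram_T$ at a time: write $\Ram_T = \sum_P a_P P$ with $a_P > 0$, and observe that $\Norm_{Y/X}$ is additive on divisors, so $\Branch_T = \sum_P a_P \Norm_{Y/X}(P)$; then it suffices to show $\supp \Norm_{Y/X}(P) \subset \{ \overline{\{f(\xi_P)\}} \}$ where $\xi_P$ is the generic point of $P$. This last containment is immediate from the compatibility of $\Norm_f$ with the norm condition: a local section vanishing only along $P$ has norm vanishing only along $f(P)$, which is the defining ``vanishing'' property of a norm of degree $n$ quoted at the beginning of the section (and the point that $f(\xi_P)$ has codimension $1$ in $X$ since $f$ is finite, hence the closure of its image is a prime divisor). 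The one subtlety to check is that different $P$'s mapping to the same prime divisor of $X$ do not cause cancellation that could remove a component—but all coefficients $a_P$ are positive and $\Norm_f$ preserves effectiveness by \autoref{lem.EffectivenessAndNorms}, so no cancellation occurs, and in any case the statement only concerns the \emph{set-theoretic} support, for which no such issue arises.

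\textbf{Main obstacle.} The genuinely delicate point is not the algebra but making sure the bookkeeping between ``generic point of a prime divisor of $Y$'' and ``its image being the generic point of a prime divisor of $X$'' is airtight: one must use that $f$ is finite (so it does not drop dimension) to know $f(\xi_P)$ is again codimension $1$, and one must be careful that the norm, built from $\Norm_{L/K}$ and extended across $X_{\mathrm{reg}}$, really is computed componentwise at height-$1$ primes—this is exactly the content of the remark in the excerpt that $\Norm_f : \DIV Y \to \DIV X$ is induced by $\Norm_f : \Pic f^{-1}(X_{\mathrm{reg}}) \to \Pic X_{\mathrm{reg}}$, so I would cite that explicitly and then the rest is the DVR computation already performed inside the proof of \autoref{lem.EffectivenessAndNormsII}.
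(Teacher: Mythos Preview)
Your proposal is correct and follows essentially the same approach as the paper: both reduce to the local situation where $X=\Spec R$ is a DVR and $Y=\Spec S$ is a semi-local PID, and both conclude by showing that if the local equation of $\Ram_T$ is a unit then so is its norm. The only cosmetic difference is that the paper makes the element explicit by writing $T=\Phi\cdot s$ with $\Phi$ a free generator of $\omega_{S/R}$ (using that $R$ and $S$ are Gorenstein in this local setting), so that $\Ram_T=\Div_S s$ and $\Branch_T=\Div_R\Norm_{S/R}(s)$, and then records the full equivalence ``$s$ is a unit if and only if $\Norm_{S/R}(s)$ is a unit'' (even though only the forward implication is needed for the lemma as stated); you instead phrase the same implication in terms of the fractional ideal $\sO_Y(\Ram_T)$ being trivial after localization.
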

\begin{proof}
We may assume that $X=\Spec R$ is the spectrum of a DVR and $Y=\Spec S$ is the spectrum of a semi-local Dedekind domain, so a PID. Since $R$ and $S$ are both Gorenstein, $\omega_{S/R}$ is freely generated as an $S$-module by some $R$-linear map $\Phi \: S \to R$. Then, there exists a unique $s \in S$ such that $T = \Phi \cdot s$ and so $\Ram_T = \Div_S s$ and $\Branch_T = \Div_R \Norm_{S/R} (s)$. The result then follows by observing that $s$ is a unit if and only if so is $\Norm_{S/R} (s)$. The direction ``$\Rightarrow$'' is clear. The converse follows from $\Norm_{S/R} (s)$ being the determinant of the multiplication-by-$s$ $R$-linear map and its surjectivity implying that $s$ is a unit.    
\end{proof}

\begin{proposition}
With notation as in \autoref{def.BranchingDivisor}, $\Branch_T$ is Cartier if $f$ is flat.
\end{proposition}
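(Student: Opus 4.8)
The plan is to realize $\Branch_T$ as the divisor of zeros of a nonzero section of an invertible sheaf on $X$, using that finiteness and flatness of $f$ make the pushforwards in play locally free, and then to identify this Cartier divisor with $\Branch_T$ through a codimension-one comparison.

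Working affine-locally, write $X=\Spec R$ and $Y=\Spec S$. Since $f$ is finite and flat, $f_*\sO_Y$ is a locally free $\sO_X$-module, of constant rank $n=[L:K]$ because $X$ is integral and this is its generic rank; by Grothendieck duality (\autoref{pro.GroDuality} with $M=\sO_X$ and $N=\sO_Y$) the module $f_*\omega_{Y/X}=\sHom_{\sO_X}(f_*\sO_Y,\sO_X)$ is then also locally free of rank $n$. I would consider the $\sO_X$-linear ``multiplication by $T$'' map
\[
\Phi_T\colon f_*\sO_Y\longrightarrow f_*\omega_{Y/X},\qquad s\longmapsto T(s\cdot-),
\]
which is generically an isomorphism: over the generic point it becomes $L\to\Hom_K(L,K)$, $s\mapsto T(s\cdot-)$, injective because $T\neq0$ and $L$ is a field, hence bijective since both sides are $n$-dimensional over $K$. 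Taking $\sO_X$-determinants yields a nonzero map $\det\Phi_T\colon\det_{\sO_X}(f_*\sO_Y)\to\det_{\sO_X}(f_*\omega_{Y/X})$ between invertible sheaves, i.e.\ a regular global section of the invertible sheaf $\sHom_{\sO_X}\!\big(\det_{\sO_X}f_*\sO_Y,\det_{\sO_X}f_*\omega_{Y/X}\big)$; its vanishing locus is an effective Cartier divisor $\mathfrak{B}'$ on $X$.

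Next I would prove $\Branch_T=\mathfrak{B}'$ by comparing coefficients at each codimension-one point $x\in X$, which suffices since on the normal integral scheme $X$ a Weil divisor is determined by those coefficients, and a Weil divisor equal to a Cartier divisor is Cartier. Localizing, $R_x\coloneqq\sO_{X,x}$ is a DVR and $S_x\coloneqq S\otimes_R R_x$ is a one-dimensional normal semilocal domain, hence a semilocal Dedekind domain and so a PID, free of rank $n$ over $R_x$; therefore $\omega_{S_x/R_x}=\Hom_{R_x}(S_x,R_x)$, being reflexive of rank one over $S_x$, is free of rank one, say generated by $\Psi$. Writing $T=c\Psi$ with $0\neq c\in S_x$, the computation in the proof of \autoref{lem.SupportOfBranch} gives $\Ram_T=\Div_{S_x}c$ near $x$, hence $\Branch_T$ has coefficient $\ord_{R_x}\Norm_{L/K}(c)$ at $x$. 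On the other hand, under the trivialization $\Psi\leftrightarrow1$ the localization of $\Phi_T$ at $x$ is multiplication by $c$ on $S_x$, whose determinant over $R_x$ is $\Norm_{L/K}(c)$ by the very definition of the norm; so $\mathfrak{B}'$ also has coefficient $\ord_{R_x}\Norm_{L/K}(c)$ at $x$. Thus $\Branch_T=\mathfrak{B}'$ is Cartier.

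I expect the only real obstacle to be bookkeeping: one must keep straight that the paper's $\Norm_f$ on (possibly non-Cartier) Weil divisors is precisely the one read off at codimension one from $\Norm_{L/K}$ of the associated fractional ideal, and that the identification $\det_{\sO_X}\Phi_T=\Norm_{L/K}(c)$ over a DVR is the classical equality of the determinant of multiplication with the field norm. The passage from codimension one to all of $X$ is then formal, and the hypothesis of flatness enters exactly once and essentially, namely to guarantee that $f_*\sO_Y$ and $f_*\omega_{Y/X}$ are locally free, so that $\det_{\sO_X}\Phi_T$ makes sense and cuts out a Cartier divisor.
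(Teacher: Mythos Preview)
Your proof is correct and follows essentially the same strategy as the paper's: use flatness to make $f_*\sO_Y$ locally free, build a Cartier divisor from a determinant attached to $T$, and then identify it with $\Branch_T$ by a codimension-$1$ computation where $S_x$ is a PID and $\omega_{S_x/R_x}$ is principally generated. The only difference is cosmetic: the paper takes the determinant of the symmetric bilinear form $T(-\cdot-)$ on $f_*\sO_Y$ (the discriminant $\delta_i$ in a local basis), whereas you take the determinant of the adjoint linear map $\Phi_T\colon f_*\sO_Y\to f_*\omega_{Y/X}$, $s\mapsto T(s\cdot-)$; since $f_*\omega_{Y/X}\cong(f_*\sO_Y)^\vee$, these two determinants live in the same line bundle $(\det f_*\sO_Y)^{\otimes -2}$ and agree. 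Your packaging is perhaps slightly more canonical (no basis choice is needed to define the section), while the paper's discriminant formulation makes the local verification read as the classical identity $\delta=\Norm_{S/R}(s)\cdot\det\bigl(\Phi(s_is_j)\bigr)$ with the second factor a unit; your version absorbs that unit into the trivialization by $\Psi$.
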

\begin{proof}
Let $\{U_i\}$ be an open covering of $X$ trivializing $f_*\sO_Y$ on $X$. Setting $V_i \coloneqq f^{-1}(U_i)$, then $f_* \sO_{V_i}$ is a free $\sO_{U_i}$-module. Let $\delta_i \in \sO_{U_i}$ be the determinant of the $\sO_{U_i}$-bilinear form $T(-\cdot -)$ on $f_* \sO_{V_i}$. Then, $(U_i,\delta_i)$ defines a locally principal ideal sheaf $\mathfrak{D}_T \subset \sO_X$. We claim that $\sO_X(\Branch_T)=\mathfrak{D}_T^{-1}$. This can be checked at codimension $1$ points, and so we may work in the setup of the proof of \autoref{lem.SupportOfBranch}. Let $s_i$ be a free basis for $S$ as an $R$-module, and let $\delta \coloneqq \det \bigl(T(s_i\cdot s_j)\bigr) \in R$. Then, the result amounts to the equality of ideals $(\delta)=\bigl(\Norm_{S/R}(s)\bigr)$. This follows from the equality $\delta = \Norm_{S/R}(s) \cdot \det \bigl(\Phi(s_i\cdot s_j)\bigr) $ and by noticing that $\det \bigl(\Phi(s_i\cdot s_j)\bigr)$ is a unit because $\Phi$ is a free generator of $\omega_{S/R}$.
\end{proof}

\begin{proposition} \label{prop.GaloisGivesRelTorsioness}
With notation as in \autoref{def.BranchingDivisor}, $\mathfrak{R}_{Y/X}$ is $f$-torsion if $f$ is generically Galois.
\end{proposition}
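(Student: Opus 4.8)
The plan is to reduce the statement to the numerical criterion for $f$-torsion of \autoref{pro.RelativeTorsion}: writing $G=\Gal(L/K)$ and $n=\lvert G\rvert=[L:K]$, it suffices to prove that $f^\ast\Norm_f(\mathfrak{R}_{Y/X})=n\cdot\mathfrak{R}_{Y/X}$. The argument then splits into two formal facts. First, that $G$ acts on $Y$ over $X$: since $Y$ is a normal integral scheme, finite and dominant over the normal scheme $X$, with function field $L$, on each affine chart $\Spec S\to\Spec R$ the ring $S$ is the integral closure of $R$ in $L$ (normality plus finiteness of normalization, available here as $F$-finite rings are excellent); hence each $\sigma\in G$, preserving integrality over $R$ inside $L$, satisfies $\sigma(S)=S$, so $\sigma$ defines an automorphism $\sigma\colon Y\to Y$ over $X$ and $\sigma^\ast$ acts on $\DIV Y$ fixing $f^\ast\DIV X$. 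Granting the two facts (a) $\sigma^\ast\mathfrak{R}_{Y/X}=\mathfrak{R}_{Y/X}$ for all $\sigma\in G$, and (b) $f^\ast\Norm_f(D)=\sum_{\sigma\in G}\sigma^\ast D$ for every divisor $D$ on $Y$, we get $f^\ast\Norm_f(\mathfrak{R}_{Y/X})=\sum_{\sigma\in G}\sigma^\ast\mathfrak{R}_{Y/X}=\sum_{\sigma\in G}\mathfrak{R}_{Y/X}=n\cdot\mathfrak{R}_{Y/X}$, and \autoref{pro.RelativeTorsion} concludes.

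For (a), I would return to the construction of $\mathfrak{R}_{Y/X}=\Ram_{\Tr_{Y/X}}$ in the example following \autoref{prop.MApsAndDivisors}. Realizing $\omega_{Y/X}$ inside its generic fibre $\omega_{Y/X}\otimes_R K\cong\Hom_K(L,K)\cong L$ via the nondegenerate trace pairing, with $\Tr_{Y/X}$ corresponding to $1\in L$, the isomorphism $\sO_Y(\Ram_T)\to\omega_{Y/X}$, $v\mapsto\Tr_{Y/X}(v\cdot-)$, identifies $\sO_Y(\mathfrak{R}_{Y/X})$ with the fractional ideal $\mathfrak{I}\coloneqq\{v\in L\mid \Tr_{L/K}(v\sO_Y)\subseteq\sO_X\}$ of $\sO_Y$ (the inverse different). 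Since for $\sigma\in G$ one has $\sigma(\sO_Y)=\sO_Y$ and $\Tr_{L/K}\circ\sigma=\Tr_{L/K}$, a one-line computation shows $\sigma(\mathfrak{I})=\mathfrak{I}$, hence $\sigma^\ast\mathfrak{R}_{Y/X}=\mathfrak{R}_{Y/X}$. I expect this identification of $\Ram_{\Tr_{Y/X}}$ with the (inverse) different, together with correctly pinning down the embedding of $\omega_{Y/X}$ into $L$ and the sign conventions, to be the only genuinely delicate point; the rest is routine.

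For (b), both sides are honest Weil divisors on the normal scheme $Y$, so the equality may be checked at each codimension-$1$ point $Q$ of $X$. After localizing we may assume $X=\Spec R$ with $R$ a DVR and $Y=\Spec S$ with $S$ a semilocal Dedekind domain, hence a PID, finite over $R$; write the relevant piece of $D$ as $\Div_S s$ with $s\in L^\times$. Recalling that $\Norm_f$ is computed by the integral restriction of $\Norm_{L/K}$, near $Q$ we have $\Norm_f(D)=\Div_R\Norm_{L/K}(s)=\Div_R\prod_{\sigma\in G}\sigma(s)$, whence $f^\ast\Norm_f(D)=\Div_S\prod_{\sigma\in G}\sigma(s)=\sum_{\sigma\in G}\Div_S\sigma(s)=\sum_{\sigma\in G}\sigma^\ast D$ near $Q$, using that each $\sigma$ permutes the maximal ideals of $S$ and is therefore a ring automorphism of $S$ over $R$. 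Gluing these local equalities yields (b), and combining (a), (b) with \autoref{pro.RelativeTorsion} gives that $\mathfrak{R}_{Y/X}$ is $f$-torsion.
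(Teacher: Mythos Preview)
Your proof is correct and rests on the same two ingredients as the paper's: the $G$-invariance of $\mathfrak{R}_{Y/X}$ (coming from $\Tr_{L/K}\circ\sigma=\Tr_{L/K}$) and the Galois formula $\Norm_{L/K}(s)=\prod_{\sigma\in G}\sigma(s)$, combined via \autoref{pro.RelativeTorsion}. Your factorization into the two global statements (a) and (b) is somewhat more transparent than the paper's local ideal-theoretic argument, but the approach is essentially the same.
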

\begin{proof}
Work in the setup of the proof of \autoref{lem.SupportOfBranch}. The statement then reduces to the equality $(s^n)=\big(\Norm_{S/R}(s)\big)$ of ideals in $S$ if $L/K$ is Galois. By \autoref{lem.EffectivenessAndNormsII} and its proof, $\Norm_{S/R}(s) \in (s^n)$ unconditionally. We suppose that $L/K$ is Galois and show that $s^n \in \big(\Norm_{S/R}(s)\big)$. By \autoref{pro.RelativeTorsion}, it suffices to prove that $s \in \sqrt{(\Norm_{S/R}(s))}$, for which it is enough to show that $s$ belongs to all the ramification primes (if any). This is granted by the Galois symmetry and $\Tr = \sum_{\sigma \in \Gal(L/K)}\sigma$.
\end{proof}

\begin{remark}
We cannot expect the converse of \autoref{prop.GaloisGivesRelTorsioness} to hold. Indeed, if $f$ is quasi-\'etale then $\mathfrak{R}_{Y/X}$ is trivially $f$-torsion yet this has no bearing on $f$ being Galois.
\end{remark}

As a direct application of \autoref{lem.EffectivenessAndNorms} and \autoref{lem.EffectivenessAndNormsII}, \autoref{thm.TransposabilityCriterion} translates to:

\begin{theorem} \label{thm.TransposabilityFaithfullyFlat}
With notation as in \autoref{def.BranchingDivisor}, there exists a rational number $1\leq c \leq n$ such that for all divisors $D$ the following inclusions of Cartier algebras hold:
\[
\sC_{\sO_X(D)}^{c \cdot \Delta} \subset \sC_{\sO_X(D)}^{\top} \subset \sC_{\sO_X(D)}^{\Delta},
\]
where $\Delta \coloneqq \frac{1}{n} \cdot \Branch_T$. Furthermore, we may take $c=1$ if $\Ram_T$ is $f$-torsion.
\end{theorem}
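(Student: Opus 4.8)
The plan is to derive \autoref{thm.TransposabilityFaithfullyFlat} as a direct corollary of Schwede--Tucker's criterion (\autoref{thm.TransposabilityCriterion}), translating the statement ``$f^{*}\Delta_{\varphi}-\Ram_{T}\geq 0$'' into a condition on divisors on $X$ via the norm. Recall \autoref{thm.TransposabilityCriterion} says $\varphi\in\sC_{e,\sO_X(D)}$ is $T$-transposable iff $f^{*}\Delta_{\varphi}-\Ram_{T}\geq 0$. So $\sC_{\sO_X(D)}^{\top}$ consists precisely of those $\varphi$ with $f^{*}\Delta_{\varphi}\geq\Ram_{T}$. I want to sandwich this set between $\sC_{\sO_X(D)}^{c\Delta}$ and $\sC_{\sO_X(D)}^{\Delta}$ where $\Delta=\frac{1}{n}\Branch_{T}$, i.e. I need: (i) $\Delta_{\varphi}\geq\Delta \implies f^{*}\Delta_{\varphi}\geq\Ram_{T}$, and (ii) $f^{*}\Delta_{\varphi}\geq\Ram_{T}\implies \Delta_{\varphi}\geq c\Delta$ for a suitable uniform $c\in[1,n]$.

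For direction (ii): if $f^{*}\Delta_{\varphi}\geq\Ram_{T}$, apply $\Norm_{f}$, which preserves effectiveness of the difference by \autoref{lem.EffectivenessAndNorms} (applied to $f^{*}\Delta_{\varphi}-\Ram_{T}\geq 0$), giving $\Norm_{f}(f^{*}\Delta_{\varphi})\geq\Norm_{f}(\Ram_{T})=\Branch_{T}$. Since $\Norm_{f}\circ f^{*}$ is multiplication by $n$ on divisors, $\Norm_{f}(f^{*}\Delta_{\varphi})=n\Delta_{\varphi}$, so $n\Delta_{\varphi}\geq\Branch_{T}$, i.e. $\Delta_{\varphi}\geq\frac{1}{n}\Branch_{T}=\Delta$. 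This already handles the subtle case $c=1$, but unconditionally only if I am allowed $c=1$ here — wait, the asymmetry is in the \emph{other} direction. Let me re-examine: $\Delta_{\varphi}\geq\Delta$ means $\Delta_{\varphi}-\frac{1}{n}\Branch_{T}\geq 0$; pulling back and using \autoref{lem.EffectivenessAndNormsII} in the form $f^{*}\Norm_{f}(\Ram_{T})=f^{*}\Branch_{T}\geq k\,\Ram_{T}$ for some $1\leq k\leq n$, I get $f^{*}\Delta_{\varphi}\geq\frac{1}{n}f^{*}\Branch_{T}\geq\frac{k}{n}\Ram_{T}$, which is $\geq\Ram_{T}$ only if $k=n$. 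So the correct bracketing must be: for (i) I should use $c\Delta$ on the left with $c=n/k$, and for (ii) I get $\Delta_{\varphi}\geq\Delta$ directly with constant $1$. Rewriting the theorem's claim: $\sC^{c\Delta}\subset\sC^{\top}\subset\sC^{\Delta}$ means $\Delta_{\varphi}\geq c\Delta\implies$ transposable $\implies\Delta_{\varphi}\geq\Delta$. So (ii) is the clean one (constant $1$), and (i) needs $c\Delta_{\varphi}$—precisely, if $\Delta_{\varphi}\geq c\Delta=\frac{c}{n}\Branch_{T}$ then $f^{*}\Delta_{\varphi}\geq\frac{c}{n}f^{*}\Branch_{T}\geq\frac{ck}{n}\Ram_{T}$, which is $\geq\Ram_{T}$ as soon as $ck\geq n$, i.e. $c\geq n/k$. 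Since $1\leq k\leq n$, setting $c\coloneqq n/k$ gives $1\leq c\leq n$, as required.

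So the proof is: invoke \autoref{thm.TransposabilityCriterion} to identify $\sC_{\sO_X(D)}^{\top}=\{\varphi\mid f^{*}\Delta_{\varphi}\geq\Ram_{T}\}$; apply \autoref{lem.EffectivenessAndNormsII} to the effective divisor $\Ram_{T}$ to obtain the integer $k$ with $f^{*}\Branch_{T}=f^{*}\Norm_{f}(\Ram_{T})\geq k\,\Ram_{T}$ and $1\leq k\leq n$; set $c=n/k$. The inclusion $\sC^{\top}\subset\sC^{\Delta}$ follows from \autoref{lem.EffectivenessAndNorms} together with $\Norm_{f}f^{*}=n\cdot(-)$ as computed above. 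The inclusion $\sC^{c\Delta}\subset\sC^{\top}$ follows from the chain $f^{*}\Delta_{\varphi}\geq\frac{c}{n}f^{*}\Branch_{T}\geq\frac{ck}{n}\Ram_{T}=\Ram_{T}$. Finally, if $\Ram_{T}$ is $f$-torsion, \autoref{pro.RelativeTorsion} gives $f^{*}\Norm_{f}(\Ram_{T})=n\cdot\Ram_{T}$, so $k=n$ and hence $c=n/k=1$.

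I expect the main obstacle to be purely bookkeeping: getting the direction of the inequalities and the placement of the constant $c$ exactly right, since $\Norm_{f}$ and $f^{*}$ interact asymmetrically (one composition is exactly multiplication-by-$n$, the other only bounds below by $k$). There is also a minor point of rigor in that \autoref{lem.EffectivenessAndNormsII} produces a $k$ depending a priori on $D$ only through $\Ram_{T}$—but $\Ram_{T}$ does not depend on $D$, so the same $c$ works uniformly for all $D$, which is exactly what the statement asserts; I will make this independence explicit. No genuinely hard step arises; the content was already in \autoref{thm.TransposabilityCriterion} and the norm lemmas.
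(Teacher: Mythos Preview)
Your proposal is correct and follows essentially the same route as the paper: both identify $\sC^{\top}$ via \autoref{thm.TransposabilityCriterion}, obtain $\sC^{\top}\subset\sC^{\Delta}$ by applying $\Norm_f$ and \autoref{lem.EffectivenessAndNorms} together with $\Norm_f\circ f^*=n\cdot(-)$, obtain $\sC^{c\Delta}\subset\sC^{\top}$ by applying \autoref{lem.EffectivenessAndNormsII} to $\Ram_T$ and setting $c=n/k$, and handle the $f$-torsion case via \autoref{pro.RelativeTorsion}. Your observation that $k$ (and hence $c$) depends only on $\Ram_T$ and not on $D$ is exactly right and matches the paper's implicit reasoning.
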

\begin{proof}
Work in the affine case $X=\Spec R$. For $\sC_{R(D)}^{\top} \subset \sC_{R(D)}^{\Delta}$, we must show that, for nonzero $\varphi \: F^e_* R(D) \to R(D)$, $f^*\Delta_{\varphi} - \Ram_T \geq 0$ implies $\Delta_{\varphi} - \Delta \geq 0$. This follows from \autoref{lem.EffectivenessAndNorms} by noticing that
\[
0 \leq \Norm_f(f^* \Delta_{\varphi} - \Ram_T) = \Norm_f(f^* \Delta_{\varphi}) - \Norm_f(\Ram_T) = n \cdot \Delta_{\varphi} - n \cdot \Delta,
\]
and dividing by $n$. Apply \autoref{lem.EffectivenessAndNormsII} to $\Ram_T$ to get $1\leq k \leq n$ and set $c \coloneqq n/k \in \bQ \cap [1,n]$ (further, $k$ can be taken to be $n$ if $\Ram_T$ is $f$-torsion and so $c=1$). To have $\sC_{R(D)}^{c \cdot \Delta} \subset \sC_{R(D)}^{\top}$, we must show that, for nonzero $\varphi \: F^e_* R(D) \to R(D)$, if $\Delta_{\varphi} \geq c \cdot \Delta$ then $f^*\Delta_{\varphi} \geq \Ram_T$. This follows from:
\[
f^* \Delta_{\varphi} \geq  f^* (c \cdot \Delta) = \frac{c}{n} \cdot f^* \Norm_f(\Ram_T) \geq \frac{ck}{n} \cdot \Ram_T = \Ram_T.
\]
\end{proof}

Let $(R,\fram,\kay,K)$ be a Cohen--Macaulay normal complete domain of dimension $d$ and write $X=\Spec R$. By the Cohen--Gabber theorem (see \cite{KuranoShimomotoElementaryProof}), $R$ admits a generically \'etale Noether normalization $f\: X \to \hat{\bA}_{\kay}^{d}$ which is flat (\cite[Corollary 18.17]{EisenbudCommutativeAlgebraWithAView}), say of rank $n$. By \cite{SkalitKoszulFactorization}, we may assume that $p \nmid n$ if $\kay=\kay^{\mathrm{alg}}$. Further, $\omega_R = \omega_{R/A}\cong R(\mathfrak{R})$ and so $\mathfrak{R}$ is a canonical divisor on $X$. By \cite[Theorem 6.8]{AltmanKleimanIntroToGrothendieckDuality}, the branch locus and ramification locus of $f$ are divisors. Following \cite[I Exercise 3.9]{MilneEtaleCohomology}, denote the different ideal of $f$ by $\mathfrak{d}_{R/A} \subset R$ and its discriminant ideal by $\mathfrak{D}_{R/A} \subset A$.  These ideals are related by the equality $\Norm_{R/A}(\mathfrak{d}_{R/A}) = \mathfrak{D}_{R/A}$. In particular, $\mathfrak{d}_{R/A} = R(-\mathfrak{R})$, $\mathfrak{D}_{R/A}=A(-\mathfrak{B})$. Note that $\mathfrak{B}$ is a Cartier divisor since $f$ is flat and further principal since $\hat{\bA}_{\kay}^d$ has trivial Picard group. In fact, $\mathfrak{B} = \Div \delta$ where $\delta$ is the discriminant of $T$, \eg $\mathfrak{D}_{R/A}=(\delta)$. If $R$ is Gorenstein, $\omega_R$ is a free rank $1$ $R$-module, and so there is a free generator $T\: R \to A$ for the $R$-module $\Hom_A(R,A)$. In particular, there is a unique $\rho \in R$ such that $\Tr_{R/A} \: R \to A$ equals $T \cdot \rho $, and so $\mathfrak{d}_{R/A}= (\rho)$, $\mathfrak{R} = \Div_R \rho$, $\mathfrak{D}_{R/A} = (\delta)$, and $\mathfrak{B}=\Div_A \delta$, where $\delta \coloneqq \Norm_{R/A}(\rho)$.

\begin{corollary} \label{thm.RadicalNN}
With notation as above, let $(\delta) = \mathfrak{D}_{R/A}$ and $\Delta \coloneqq \frac{1}{n} \cdot \Div_A \delta$. Then, there exists a rational number $1 \leq c \leq n$ such that $\sC_A^{c \cdot \Delta} \subset \sC_A^{\top} \subset \sC_A^{\Delta}$, where transposition is defined with respect to $\Tr_{R/A} \: R \to A$. We may take $c=1$ if $\mathfrak{R}$ is $f$-torsion. Assume that $\kay$ algebraically closed and $p \nmid n$. If the pair $(A,c\cdot \Delta)$ is $F$-regular (resp. $F$-pure) then so is $R$ and $s(R) \geq n \cdot s(A,c \cdot \Delta)$. The converse and equality hold if $\mathfrak{R}$ is $f$-torsion. 
\end{corollary}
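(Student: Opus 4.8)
The statement is essentially an immediate application of \autoref{thm.TransposabilityFaithfullyFlat} specialized to the Noether normalization $f\colon X \to A \coloneqq \hat{\bA}_{\kay}^d$, combined with the transformation rule \autoref{thm.MainTheorem} (for $F$-signatures) and \autoref{thm.TransRuleSplittingRatios} (for $F$-purity/$F$-regularity and splitting ratios), together with \autoref{pro.RecoveringAllStuff} to translate statements about $f^*\sC_A^{c\cdot\Delta}$-modules into statements about divisorial pairs. First I would record the setup: $R$ is a Cohen--Macaulay normal complete domain, $f$ is flat of rank $n$, generically \'etale, $\mathfrak{B} = \Div_A \delta$ with $(\delta) = \mathfrak{D}_{R/A}$, and $\Ram_T = \mathfrak{R}$ is a canonical divisor; since $f = \Tr_{R/A}$ is the Frobenius trace setup with $D = 0$, and $R$, $A$ both satisfy $\mathbf{G}_1 + \mathbf{S}_2$ (they are normal) and $(!)$ (complete local / essentially of finite type), \autoref{thm.TransposabilityFaithfullyFlat} applies verbatim. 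The chain $\sC_A^{c\cdot\Delta} \subset \sC_A^{\top} \subset \sC_A^{\Delta}$ is then exactly the content of that theorem with $X$ replaced by $\Spec A$ and the $f$-torsion refinement giving $c = 1$ when $\mathfrak{R}$ is $f$-torsion.

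**Main steps.** With the Cartier-algebra sandwich in hand, the argument proceeds as follows. (1) By \autoref{pro.RecoveringAllStuff} applied to $X = \Spec A$, $Y = \Spec R$, $D = 0$, $\Delta$ replaced by $c\cdot\Delta$, and $\Delta^* = f^*(c\cdot\Delta) - \Ram_T$ (which is effective precisely because $c\cdot\Delta \geq \Delta$ and by the computation in the proof of \autoref{thm.TransposabilityFaithfullyFlat}), we get that $R$ is a $T$-transposable Cartier $\sC_A^{c\cdot\Delta}$-module and that $s\big(R, f^*\sC_A^{c\cdot\Delta}\big) = s\big(R, \Delta^{**}\big)$ where $\Delta^{**} = f^*(c\cdot\Delta) - \mathfrak{R}$; in particular when $R$ is Gorenstein or more generally normal the hypotheses of \autoref{thm.MainTheorem} are met since $\omega_{R/A} \cong R(\mathfrak{R})$ with $T$ a generic isomorphism, $T = \Tr_{R/A}$ is surjective (as $p \nmid n$ forces $\Tr_{R/A}$ surjective), and $T(\fran) \subset \fram$ because the trace of the maximal ideal lands in the maximal ideal for a module-finite local extension. (2) Apply \autoref{thm.MainTheorem}: since $A$ and $R$ are both regular in codimension zero and $[\el:\kay] = [L:K] = n$ (as $\kay$ is algebraically closed and $A$ is a power series ring over $\kay$, the residue field extension is trivial so $[\el:\kay] = 1$; meanwhile the generic free rank is $n$), we obtain $s\big(R, f^*\sC_A^{c\cdot\Delta}\big) = n\cdot s\big(A, \sC_A^{c\cdot\Delta}\big) = n\cdot s(A, c\cdot\Delta)$. (3) Now use the Cartier-algebra inclusions: $\sC_A^{c\cdot\Delta} \subset \sC_A^{\top}$ gives, via \autoref{lem.RecoveringAllStuuf} (there is an $M$-regular element $\rho$ with $(\sC_A^{\top})_e\cdot\rho \subset (\sC_A^{c\cdot\Delta})_e$ coming from the divisorial gap $c\cdot\Delta - \Delta$), that $\uptau(A, \sC_A^{\top}) \supset \uptau(A, \sC_A^{c\cdot\Delta})$; combined with $\sC_A^{\top}\subset\sC_A^{\Delta}$ this sandwiches $\uptau(A, \sC_A^{\top})$ between $\uptau(A, c\cdot\Delta)$ and $\uptau(A,\Delta)$. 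In particular, if $(A, c\cdot\Delta)$ is $F$-regular then so is $(A, \sC_A^{\top})$, hence $(R, \sC_A^{\top})$ is $F$-regular by \autoref{theo.TraceTauSurjective} / \autoref{cor.SpecializationTransposability}, and since $\sC_A^{\top}$ acts on $R$ as a subalgebra of $\sC_R$ one deduces $R$ is strongly $F$-regular; the $F$-pure case is identical using $\sp$ instead of $\uptau$ and \autoref{thm.TransRuleSplittingRatios}. (4) For the inequality $s(R) \geq n\cdot s(A, c\cdot\Delta)$: $s(R) = s(R, \sC_R) \geq s(R, f^*\sC_A^{c\cdot\Delta})$ because $f^*\sC_A^{c\cdot\Delta} \subset \sC_R$ is a subalgebra (more maps can only help splitting numbers), and the right side equals $n\cdot s(A, c\cdot\Delta)$ by step (2). (5) When $\mathfrak{R}$ is $f$-torsion, $c = 1$ and the inclusions in \autoref{thm.TransposabilityFaithfullyFlat} collapse to equalities $\sC_A^{\Delta} = \sC_A^{\top}$, so $f^*\sC_A^{\Delta}$ and $\sC_R$ agree up to multiplication by a regular element (again \autoref{lem.RecoveringAllStuuf}, since $f$ generically \'etale and the divisorial correction $f^*\lceil(q-1)\Delta\rceil - \lceil(q-1)f^*\Delta\rceil$ is bounded by a fixed divisor), giving $s(R) = s(R, f^*\sC_A^{\Delta}) = n\cdot s(A, \Delta) = n\cdot s(A, c\cdot\Delta)$ and the full equivalence in both directions.

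**The main obstacle.** The genuinely nontrivial input is not in this corollary at all but in \autoref{thm.TransposabilityFaithfullyFlat} and \autoref{lem.EffectivenessAndNormsII}, which supply the rational number $c \in [1,n]$ comparing $f^*\Delta_\varphi \geq \Ram_T$ with $\Delta_\varphi \geq c\cdot\Delta$; granting those, the present proof is bookkeeping. The one point requiring care is matching hypotheses across \autoref{thm.MainTheorem}: one must check that the generic-flatness, surjectivity-of-$T$, and $T(\fran)\subset\fram$ conditions hold for the Noether normalization — flatness is Cohen--Gabber plus \cite[Corollary 18.17]{EisenbudCommutativeAlgebraWithAView}, surjectivity of $\Tr_{R/A}$ uses $p\nmid n$ (so that $\frac{1}{n}\Tr_{R/A}$ splits the inclusion after identifying $\omega_{R/A}$ appropriately, or directly that the trace form is nondegenerate and the trace of $1$ is the unit $n$), and $T(\fran)\subset\fram$ holds since $\fran$ is integral over $\fram$ and $\Tr_{R/A}$ of an integral element over $\fram$ lies in the integral closure of $\fram$ in $A$, which is $\fram$ as $A$ is normal. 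I would also double-check that $[\el:\kay] = 1$ in the hypothesis ``$\kay$ algebraically closed'': $A = \hat{\bA}^d_{\kay}$ has residue field $\kay$, and $R$ being module-finite over $A$ with $\kay = \kay^{\mathrm{alg}}$ forces $\el = \kay$, so $[\el:\kay] = 1$ and the transformation rule reads $s(R, \cdot) = n\cdot s(A,\cdot)$ as claimed.
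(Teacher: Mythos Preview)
Your approach is essentially the same as the paper's: apply \autoref{thm.TransposabilityFaithfullyFlat} for the sandwich, verify the hypotheses of \autoref{thm.MainTheorem} and \autoref{thm.TransRuleSplittingRatios} for $T=\Tr_{R/A}$, and for the $f$-torsion case observe $\sC_A^{\top}=\sC_A^{\Delta}$ and $f^*\Delta=\mathfrak{R}$ (so $\Delta^*=0$ and \autoref{pro.RecoveringAllStuff} gives $s(R,f^*\sC_A^{\Delta})=s(R)$). The paper's proof is a four-line sketch of exactly this; your elaboration via steps (1)--(5) is correct in outline.

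One genuine (if small) gap: your justification of $\Tr_{R/A}(\fram_R)\subset \fram_A$ via ``$\fram_R$ is integral over the ideal $\fram_A$'' does not work as stated. For instance, in $A=\kay\llbracket x^2\rrbracket\subset R=\kay\llbracket x\rrbracket$ the element $x\in\fram_R$ is \emph{not} integral over $(x^2)$, yet $\Tr(x)=0\in\fram_A$. The correct argument is that multiplication by any $s\in\fram_R$ on the free $A$-module $R$ is nilpotent modulo $\fram_A$ (since $\fram_R=\sqrt{\fram_A R}$), hence its trace lies in $\fram_A$. The paper simply cites \cite[Lemma 2.10]{CarvajalSchwedeTuckerEtaleFundFsignature} for this fact. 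Also, your display ``$[\el:\kay]=[L:K]=n$'' contradicts the parenthetical that immediately follows; presumably you meant $[\el:\kay]=1$ and $[L:K]=n$. Finally, step (3) is unnecessary: step (4) already gives the $F$-regularity and $F$-purity conclusions directly from the subalgebra inclusion $f^*\sC_A^{c\Delta}\hookrightarrow\sC_R$ without appealing to sandwiches of test ideals on $A$.
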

\begin{proof}
The first part is a particular case of \autoref{thm.TransposabilityFaithfullyFlat} with $T = \Tr_{R/A}$. For the second part, observe that $\Tr_{R/A}$ is surjective, and $\Tr_{R/A}(\fram) \subset (x_1,\ldots,x_d)$ by \cite[Lemma 2.10]{CarvajalSchwedeTuckerEtaleFundFsignature}. Then, apply \autoref{thm.TransRuleSplittingRatios} and \autoref{thm.MainTheorem}. For the final statement, note that if $\mathfrak{R}$ is $f$-torsion then $\sC_A^{\top} = \sC_A^{\Delta}$ and $f^* \Delta = \mathfrak{R}$.
\end{proof}

We end with some examples illustrating \autoref{thm.RadicalNN}; beginning with two Gorenstein singularities known for their rather mysterious $F$-signature.

\begin{example}
Consider $R = \kay\llbracket x_0, \ldots, x_n\rrbracket/(x_0^2 + \cdots + x_n^2)$, where $\Char \kay \neq 2$. We can directly apply \autoref{thm.RadicalNN} by taking $A = \kay\llbracket x_1^2, \ldots, x_n^2\rrbracket$. The ramification divisor is given by $\mathfrak{R} = \Div_R \rho$ where $\rho = x_0 \cdot x_1 \cdots x_n$, and $\Norm_{R/A}(\rho) = \rho^2 = \bigl(x_1^2 + \cdots + x_n^2\bigr)\cdot x_1^2 \cdots x_n^2$. In particular, $\mathfrak{R}$ is $f$-torsion. Thus, $s(R) = 2^{n+1} \cdot s(A, \Delta), \text{ where } \Delta = \frac{1}{2} \cdot \Div_A (x_1^2 + \cdots+ x_n^2) \cdot x_1^2 \cdots x_n^2$. Let $\Delta'=\frac{1}{2} \cdot \Div_A x_1^2 \cdots x_n^2$, then $s(A, \Delta') = 1/2^n$ by \cite[Example 4.19]{BlickleSchwedeTuckerFSigPairs1}. However, this gives no nontrivial upper bound for $s(R)$. The $F$-signature of $R$ (for large $n$) is quite involved and depends on $p$. It can be computed by applying \cite[Example 2.3]{WatanabeYoshidaMinimalRelative} to the results in \cite{GesselMonskySumOfSquares}. It turns out that $\lim_{p \rightarrow \infty} s(R)$ is the coefficient of $z^n$ in the Taylor series of $\sec z + \tan z$.
\end{example}

\begin{example}
Set $R = \mathbb{F}_2\llbracket x,y,z,u,v \rrbracket/(x^3 + y^3 + xyz + uv)$. A Noether normalization is $A = \bF_2\llbracket y,z, u,v\rrbracket$ and $1, x, x^2$ is an $A$-basis of $R$. A generator of $\Hom_A(R,A)$ is $(x^2)^\vee $, for $(x^2)^\vee \cdot x = x^\vee$ and $(x^2)^\vee \cdot (x^2 + yz) = 1^\vee$. A short computation shows that $\Tr_{R/A} = 1^\vee$. Hence, $\mathfrak{R}=\Div_R \rho$ with $\rho = x^2+yz$. On the other hand, $x \rho = y^3+uv \eqqcolon \epsilon $. Then, from $x^3+(yz)x + \epsilon = 0$, we obtain the minimal equation $\rho^3+(yz)\rho^2+\epsilon^2=0
$. Therefore, $\delta \coloneqq \Norm_{R/A}(\rho) = \epsilon^2$ by a direct computation. Moreover, $
\rho^2(\rho+yz)=\rho^2 x^2 = \delta$. Hence, we may take $c=2/3$ in \autoref{thm.RadicalNN} to say that
\[
\sC_A^{\frac{1}{2} \cdot \Div_A \delta} \subset \sC_A^{\top} \subset \sC_A^{\frac{1}{3} \cdot \Div_A \delta }.
\]
We claim that the left inclusion is an equality, \ie $\sC_A^{\top} = \sC_A^{\Div_A \epsilon}$. Indeed, let $\varphi \in \sC_{e,A}^{\top}$ and write $\varphi = \Phi^e \cdot a$, where $\Phi$ is the Frobenius trace of $A$. We show that $\Delta_{\varphi} \geq \Div_A \epsilon$. We know that $\Delta_{\varphi} = \frac{1}{2^e-1} \cdot \Div_A a$ and $\frac{1}{2^e-1} \cdot \Div_R a = f^* \Delta_{\varphi} \geq \Div_R \rho = \Div_R \epsilon - \Div_R x$. That is, $\Div_R a x^{2^e-1} \geq \Div_R \epsilon^{2^e-1}$, so $ax^{2^e-1}/\epsilon^{2^e-1} \in R$. To grasp this better, we need:

\begin{claim} \label{claim.powersofx}
Let $a_e,b_e,c_e \in A$ be defined by $x^{2^e-1}=a_e x^2+b_e x+c_e$ in $R$. Then $a_e=0$ for all $e$, $b_1=1$, $c_1=0$, and the recursive formulas hold: $b_{e+1}=(yz)b_e^2+c_e^2$ and $c_{e+1}=b_e^2 \epsilon$.
\end{claim}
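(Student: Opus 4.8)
The plan is to prove \autoref{claim.powersofx} by a straightforward induction on $e$, using the cubic relation defining $R$ to reduce powers of $x$ against the free $A$-basis $1,x,x^2$. Since $1,x,x^2$ is an $A$-basis of $R$, the coefficients $a_e,b_e,c_e\in A$ in the expansion $x^{2^e-1}=a_ex^2+b_ex+c_e$ are uniquely determined, so at every stage it is legitimate to read off coefficients termwise against this basis.

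First I would record the relation in the form actually needed. In $R$, since $\Char\bF_2=2$ and $\epsilon=y^3+uv$, the element $x$ satisfies $x^3=(yz)x+\epsilon$, and hence $x^4=(yz)x^2+\epsilon x$ and $x^5=\epsilon x^2+(yz)^2x+(yz)\epsilon$. These three identities are the only ring-theoretic input. For the base case $e=1$ one simply has $x^{2^1-1}=x$, so $a_1=0$, $b_1=1$, $c_1=0$.

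For the inductive step I would write $2^{e+1}-1=2(2^e-1)+1$, so that $x^{2^{e+1}-1}=(x^{2^e-1})^2\cdot x$. Expanding the square in characteristic $2$ kills the cross terms, giving $(x^{2^e-1})^2=a_e^2x^4+b_e^2x^2+c_e^2$ and therefore $x^{2^{e+1}-1}=a_e^2x^5+b_e^2x^3+c_e^2x$. Substituting the reductions of $x^3$ and $x^5$ above and collecting coefficients against $1,x,x^2$ yields
\[
a_{e+1}=a_e^2\epsilon,\qquad b_{e+1}=a_e^2(yz)^2+b_e^2(yz)+c_e^2,\qquad c_{e+1}=a_e^2(yz)\epsilon+b_e^2\epsilon.
\]
Because $a_1=0$, the first recursion forces $a_e=0$ for all $e$ by induction; feeding $a_e=0$ into the remaining two recursions produces exactly $b_{e+1}=(yz)b_e^2+c_e^2$ and $c_{e+1}=b_e^2\epsilon$, as claimed.

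There is no genuine obstacle here: the argument is a routine induction. The only point requiring a moment's care is carrying out the reductions of $x^4$ and $x^5$ modulo the cubic correctly — in $\Char 2$ all signs disappear, so this is mechanical — together with the observation that uniqueness of the $A$-basis expansion justifies the coefficientwise comparison.
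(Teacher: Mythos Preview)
Your proof is correct and follows essentially the same approach as the paper: both use $x^{2^{e+1}-1}=(x^{2^e-1})^2\cdot x$, expand using the Frobenius in characteristic $2$, reduce $x^3$ and $x^5$ via the cubic relation, and then read off the recursions to conclude $a_e=0$ by induction.
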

\begin{proof}[Proof of claim]
The values for $a_e,b_e,c_e$ for $e=1$ are clear. Next, we observe that
\[
x^{2^{e+1}-1} = \bigl(x^{2^e-1}\bigr)^2 \cdot x = \bigl( a_e x^2+b_e x+c_e \bigr)^2 \cdot x =a_e^2 x^5 + b_e^2 x^3 + c_e^2 x.
\]
Using $x^3=(yz)x+\epsilon$, we verify that $x^5=\epsilon x^2 + (yz)^2x + (yz)\epsilon$. Consequently,
\begin{align*}
x^{2^{e+1}-1} &= a_e^2\bigl(\epsilon x^2 + (yz)^2x + (yz)\epsilon \bigr) + b_e^2 \bigl((yz)x+\epsilon\bigr)+c_e^2 x\\
&=\epsilon a_e^2 x^2 + \bigl((yz)^2a_e^2+(yz)b_e^2+c_e^2\bigr)x+(yz)\epsilon a_e^2+b_e^2 \epsilon.
\end{align*}
Hence, $a_{e+1}=\epsilon a_e^2$ and so $a_e=0$ for all $e$ as $a_1=0$. This also gives the desired formulas.
\end{proof}

With notation as in \autoref{claim.powersofx}, we have
\[
\frac{ab_e}{\epsilon^{2^e-1}} \cdot x + \frac{ac_e}{\epsilon^{2^e-1}} = \frac{a x^{2^e-1}}{\epsilon^{2^e-1}} \in R.
\]
Therefore, $ab_e/\epsilon^{2^e-1}$ and $ac_e/\epsilon^{2^e-1}$ belong to $A$. Since $ab_e/\epsilon^{2^e-1} \in A$, we have $\Div_A a + \Div_A b_e \geq (2^e-1) \Div_A \epsilon$. Thus, to prove $\Delta_{\varphi} = \frac{1}{2^e-1} \Div_A a \geq \Div_A \epsilon$, it suffices to show that $\Div_A b_e$ has no support along the prime divisor $ \Div_A \epsilon$, \ie $\val_{(\epsilon)} b_e \leq 0$. Note that \autoref{claim.powersofx} implies $b_{e} \equiv (yz)^{2^{e-1}-1} \bmod \epsilon$. Hence, $\val_{(\epsilon)} b_e \leq (2^{e-1}-1)\val_{(\epsilon)} yz$ and we are left with showing $\val_{(\epsilon)} yz \leq 0$ which is clear as $y,z \notin (\epsilon)$.

In summary, $\sC_A^{\top} = \sC_A^{\Div_A \epsilon}$ which is principally generated by $\Phi \cdot \epsilon^{2-1} = \Phi \cdot \epsilon$ and so not $F$-regular as its splitting prime contains $(\epsilon)$. Consequently, since $f^* \Div_A \epsilon - \Div_R \rho = \Div_R x$, we have that $s(R,\Div_R x) = 3 \cdot s(A,\Div_A \epsilon)=0$, which gives no information about the $F$-signature of $R$. However, we obtain $r(R,\Div_R x) = r(A,\Div_A \epsilon)>0$ as $(A,\Div_A \epsilon)$ is $F$-pure. Finally, we remark that $s(R)$ is conjectured to be $\frac{2}{3}-\frac{5 \sqrt{7}}{98}$; see \cite[Proposition 4.22]{TuckerFSigExists}.
\end{example}

\begin{example} Our final example exhibits how much simpler than $\mathfrak{R}$ the divisor $\mathfrak{B}$ could be. Let $(V_{n,d},\fram)$ be the degree-$d$ Veronese subring of $\kay\llbracket x_1, \ldots, x_n\rrbracket$ (with $\kay$ $F$-finite). 

\begin{claim}
\label{le.VeroneseNNbasis}
$A \coloneqq \kay \bigl\llbracket x_1^d, \ldots, x_n^d\bigr\rrbracket \subset V_{n,d}$ is a Noether normalization with basis $B = \big\{ x_1^{\nu_1} \cdots x_n^{\nu_n} \bigm| d \mid \sum_i \nu_i \text{ and } 0 \leq \nu_i \leq d -1 \big\}$. Let $T_i$ be the dual of $x_1^{d-1} \cdots x_i^r \cdots x_n^{d-1}$ with respect to $B$, where $n - 1= \mu d + r$ and $0 \leq r < d-1$. Then, $A_{x_i^d} \subset (V_{n,d})_{x_i^d}$ is a successive radical extension, $T_i |_{D(x_i^d)}$ is a free generator of $\Hom_{A_{x_i^d}}\big((V_{n,d})_{x_i^d}, A_{x_i^d}\big)$, and on $D(x_i^d)$ we have
\[
T \coloneqq 1^\vee = T_i \cdot \prod_{j \neq i} x_i^{-((d-1)(n-1)+r)} \cdot \prod_{j \neq i} \left(x_j/x_i\right)^{d-1}.
\] 
In particular, $\Ram_T\big\vert_{D(x_i^d)} = \Div \prod_{ j \neq i} x_j^{d-1}$. Further, $T(1) =1$ and $T(\fram) \subset (x_1^d, \ldots, x_n^d)$.
\end{claim}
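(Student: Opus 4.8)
The plan is to reduce the whole claim to an explicit change of basis. Write $R=\kay\llbracket x_1,\dots,x_n\rrbracket$ and $y_k\coloneqq x_k^d$, so that $A=\kay\llbracket y_1,\dots,y_n\rrbracket$ is regular of dimension $n$ and $V_{n,d}$ is the $\kay$-span of the monomials $x^\nu$ with $d\mid\sum_k\nu_k$; note $V_{n,d}$ is normal, being a Veronese of the regular ring $R$. First I would record that $R$ is $A$-free on the monomials $x^\nu$ with $0\le\nu_k\le d-1$, and that, writing $\nu_k=q_kd+r_k$ with $0\le r_k\le d-1$, one has $x^\nu=(\prod_k y_k^{q_k})\,x^r$, and $d\mid\sum_k\nu_k$ precisely when $d\mid\sum_k r_k$. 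Intersecting with $V_{n,d}$ gives $V_{n,d}=\bigoplus_{x^r\in B}A\cdot x^r$, i.e. $V_{n,d}$ is $A$-free with basis $B$; since $A$ is regular this is the asserted Noether normalization.

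Next I would localize at $x_i^d$. For $j\ne i$ put $t_j\coloneqq x_j/x_i=(x_jx_i^{d-1})/x_i^d\in(V_{n,d})_{x_i^d}$; then $t_j^d=y_j/y_i\in A_{x_i^d}$, and any monomial $x^\nu$ with $d\mid\sum_k\nu_k$ equals $x_i^{\sum_k\nu_k}\prod_{j\ne i}t_j^{\nu_j}$ with $x_i^{\sum_k\nu_k}=(y_i)^{(\sum_k\nu_k)/d}$ a unit of $A_{x_i^d}$. Hence $(V_{n,d})_{x_i^d}=A_{x_i^d}[t_j:j\ne i]$ is obtained from $A_{x_i^d}$ by successively adjoining the $d$-th roots $t_j$ of $y_j/y_i$ — a successive radical extension — and it is $A_{x_i^d}$-free on $\{\prod_{j\ne i}t_j^{a_j}:0\le a_j\le d-1\}$, with $B$ mapping onto this basis up to the above diagonal unit factors. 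I would then invoke the elementary fact that for a radical extension $C\subset D=C[t]/(t^d-c)$ the functional ``coefficient of $t^{d-1}$'' freely generates $\Hom_C(D,C)$ over $D$, because $t^a\cdot(\text{coeff of }t^{d-1})=(\text{coeff of }t^{d-1-a})$ runs through the $C$-dual basis; iterating via $\Hom_C(D_1\otimes_C D_2,C)\cong\Hom_C(D_1,\Hom_C(D_2,C))$ and $\Hom_C(D_2,C)\cong D_2$, the dual (relative to the $t$-monomial basis) of the top monomial $\prod_{j\ne i}t_j^{d-1}$ freely generates $\Hom_{A_{x_i^d}}\bigl((V_{n,d})_{x_i^d},A_{x_i^d}\bigr)=\omega_{Y/X}|_{D(x_i^d)}$, where $Y=\Spec V_{n,d}$ and $X=\Spec A$. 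Since $\prod_{j\ne i}t_j^{d-1}$ corresponds, up to a unit power of $x_i$, to the element $x_1^{d-1}\cdots x_i^r\cdots x_n^{d-1}$ of $B$ — the exponent $r$ being forced by $r\equiv n-1\pmod d$ so that this monomial lies in $B$ — this proves $T_i|_{D(x_i^d)}$ is a free generator. The displayed formula for $T=1^\vee$ falls out of the same bookkeeping: in the $t$-monomial basis $1^\vee$ is the dual of the bottom monomial $1$, which equals $\prod_{j\ne i}t_j^{d-1}$ times the top dual, and converting the top dual back to $T_i$ reintroduces exactly the recorded unit power $x_i^{(d-1)(n-1)+r}$ (a unit on $D(x_i^d)$ since its exponent is a multiple of $d$), giving the asserted identity $T|_{D(x_i^d)}=T_i\cdot\prod_{j\ne i}(x_j/x_i)^{d-1}\cdot x_i^{(d-1)(n-1)+r}$.

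For $\Ram_T$ I would use that there is an isomorphism $\sO_Y(\Ram_T)\cong\omega_{Y/X}$ given by $v\mapsto T(v\cdot{-})$; writing $T|=g\cdot T_i|$ on $D(x_i^d)$ with $g=\prod_{j\ne i}t_j^{d-1}\cdot(\text{unit})\in(V_{n,d})_{x_i^d}$, this yields $\Ram_T|_{D(x_i^d)}=\Div g=(d-1)\sum_{j\ne i}\Div t_j$, and since $t_j^d=y_j/y_i$ with $y_i$ a unit, $\Div t_j$ is the restriction of the prime divisor of $x_j$ on $\Spec R$, so $\Ram_T|_{D(x_i^d)}=\Div\prod_{j\ne i}x_j^{d-1}$ (interpreted as a divisor on the normal scheme $Y$). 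Finally $T(1)=1$ is the definition of $1^\vee$, and for $T(\fram)\subset(y_1,\dots,y_n)$ I would argue monomial by monomial: $\fram$ is topologically $\kay$-spanned by the $x^\nu$ with $d\mid\sum_k\nu_k$ and $\sum_k\nu_k\ge d$; decomposing such a monomial as $(\prod_k y_k^{q_k})\,x^r$ with $x^r\in B$, either $x^r\ne1$ and $T(x^\nu)=(\prod_k y_k^{q_k})T(x^r)=0$, or $x^r=1$, forcing $\sum_k q_k\ge1$ and $T(x^\nu)=\prod_k y_k^{q_k}\in(y_1,\dots,y_n)$; $A$-linearity and continuity then give $T(\fram)\subset(y_1,\dots,y_n)$.

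The step I expect to be the real work is the module-theoretic core of the second paragraph: proving that the dual of the top monomial generates $\Hom$ for an iterated radical extension, and then tracking all the unit powers of $x_i$ through the identification $(V_{n,d})_{x_i^d}=A_{x_i^d}[t_j:j\ne i]$ precisely enough to land on the exact displayed formula for $T$. The remaining points — normality of $V_{n,d}$, interpreting $\Div\prod_{j\ne i}x_j^{d-1}$ as a genuine divisor on $Y$ even though $\prod_{j\ne i}x_j^{d-1}\notin V_{n,d}$, and the monomial bookkeeping for $T(\fram)$ — are routine.
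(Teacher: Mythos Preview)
Your proposal is correct and follows essentially the same route as the paper's proof: localize at $x_i^d$, rewrite $(V_{n,d})_{x_i^d}$ as the iterated radical extension $A_{x_i^d}[t_j:j\ne i]$ with $t_j^d=y_j/y_i$, identify the free generator of $\Hom$ as the dual of the top monomial $\prod_{j\ne i}t_j^{d-1}$, and then unwind the diagonal unit change of basis between $B$ and the $t$-monomial basis to read off $T_i$ and the formula for $T=1^\vee$. Your derivation of the basis $B$ by intersecting the standard $A$-basis of $R$ with $V_{n,d}$ is slightly slicker than the paper's separate generating/independence argument, and your monomial-by-monomial verification of $T(\fram)\subset(y_1,\dots,y_n)$ is more careful than the paper's terse ``$T(\fram)=0$'' (which is literally false, e.g.\ $T(x_1^{2d})=x_1^{2d}\ne0$, though the intended containment is of course what you prove). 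Note also that the displayed formula in the statement appears to carry a sign typo and a spurious $\prod_{j\ne i}$ on the $x_i$-power; your computed exponent $x_i^{(d-1)(n-1)+r}$ (positive) is the one consistent with both your argument and the paper's own proof, and in any case only the non-unit factor $\prod_{j\ne i}(x_j/x_i)^{d-1}$ matters for $\Ram_T$.
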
 
\begin{proof}[Proof of claim]
By construction, $B$ is a generating set. Indeed, any monomial in $V_{n,d}$ of degree $< d$ in all $x_i$ is contained in this set. If some monomial has degree $\geq d$ in some $x_i$, then we can extract a suitable monomial $x_1^{a_1 d} \cdots x_n^{a_n d}$ and reduce to the previous case. To show that $B$ is linearly independent, we may invert $x_1^{d}$ (by symmetry). Then,
\begin{align*} 
A\bigl[x_1^{-d}\bigr] \cong \kay\bigl\llbracket x_1^d, (x_2/x_1)^d, \ldots,(x_n/x_1)^d \bigr\rrbracket \bigl[x_1^{-d}\bigr] \subset \kay\bigl\llbracket x_1^d, x_2/x_1, \ldots,  x_n/x_1 \bigr\rrbracket \bigl[x_1^{-d}\bigr] \cong V_{n,d}\bigl[x_1^{-d}\bigr] 
\end{align*} 
which is a successive radical extension with basis $C = \bigl\{ (x_2/x_1)^{i_2} \cdots (x_n/x_1)^{i_n} \bigm| 0 \leq i_j \leq d-1 \bigr\}$. We can transform $B$ into this basis by multiplying a monomial $x_1^{\nu_1} \cdots x_n^{\nu_n}$; with $\sum_i \nu_i = \lambda d$, by the unit $x_1^{-\lambda d}$. $\Hom_A(V_{n,d}, A)$ restricted to $D(x_1^d)$ is generated by $G \coloneqq \bigl( (x_2/x_1)^{d-1} \cdots (x_n/x_1)^{d-1}\bigr)^\vee$, where duals are taken with respect to $C$. In order to transfer this back to our original basis, note that $(x_1^{-n +1} \cdot x_2\cdots x_n)^{d-1}$ is mapped to the monomial $x_1^a x_2^{d-1} \cdots x_n^{d-1}$, where 
\begin{equation}\label{eq.congruence}
(d-1)(n-1) + a = \lambda d 
\end{equation} 
for some $0 \leq a \leq d-1$, and some $\lambda \geq 1$. Writing $n-1 = \mu d +r$ as above and taking \autoref{eq.congruence} modulo $d$, we find that $a \equiv r \bmod d$. Since $0 \leq a \leq d -1$, then $a = r$. Thus $T_1$ is a generator.

For the claim about $T$, note that the duals of $1$ with respect to $C$ and $B$ coincide. With respect to $C$: $1^\vee = G \cdot (x_2/x_1)^{d-1} \cdots (x_n/x_1)^{d-1}$. Clearly, $T(1) = 1$, and $T(\mathfrak{m}) = 0$.
\end{proof}

Note that if $\Char \kay \nmid d$ then $T = 1/d^n \Tr$. However, $T\neq 0$ even if $A \subset V_{n,d}$ is purely inseparable. By \autoref{le.VeroneseNNbasis}, $A \subset V_{n,d}$ is locally a radical extension. Indeed, the pullback of $\Ram_T$ to $\Spec V_{n,d} \smallsetminus \{\fram\}$ is given by the Cartier divisor $\big(D(x_i^d), \Div \prod_{ j \neq i} x_j^{d-1} \big )$. Denoting $\rho_i \coloneqq  \prod_{ j \neq i} x_j^{d-1}$, we see that $\rho_i^d = \big(\prod_{j \neq i} x_j^d \big)^{d-1}$. In particular,
\[
\Branch_T=\Norm_{R/A} (\Ram_T) =\left(D\bigl(x_i^d\bigr), (d-1) \Div \prod_{i\neq j} x_j^d\right) = (d-1) \Div x_1^d \cdots x_n^d,
\]
Thus, $\sC_A^{\top} = \sC_A^{\Delta}$ where $\Delta = \frac{d-1}{d} \Div x_1^d \cdots x_n^d$, and moreover
\[
s(V_{n,d}) = d^{n-1} \cdot s(A, \Delta) = d^{n-1} \cdot (1-(d-1)/d)^n = 1/d 
\]
where the middle equality is \cite[Example 4.19]{BlickleSchwedeTuckerFSigPairs1}. Of course, the $F$-signature of the $V_{n,d}$ is already known by results of Singh \cite{SinghFSignatureOfAffineSemigroup} and of Von Korff \cite{VonKorffFSigOfAffineToric}. 
\end{example}

\bibliographystyle{amsalpha}
\bibliography{MainBib}

\end{document}